\newtheorem{Theorem}{Theorem}[section]
\newtheorem{Lemma}[Theorem]{Lemma}
\newtheorem{Corollary}[Theorem]{Corollary}
\theoremstyle{definition}
\newtheorem{Definition}[Theorem]{Definition}
\newtheorem{Remark}[Theorem]{Remark}
\DeclareMathOperator{\dom}{\mathrm{dom}}
\newcommand{\rst}{{\restriction}}
\newcommand{\conc}{^{\smallfrown}}
\newcommand{\rca}{\mathsf{RCA}_0}
\newcommand{\prsou}{\mathsf{PRS\omega U}}
\newcommand{\atr}{\mathsf{ATR}_0}
\newcommand{\pica}{\Pi^1_1\mbox{-}\mathsf{CA}_0}
\DeclareMathOperator{\rank}{{rk}}
\DeclareMathOperator{\prim}{{Pr}}
\newcommand{\barprec}{\triangleleft}
\title{Generalized Higman's Theorem and iterated ideals}
\author{Fedor Pakhomov and Giovanni Soldà}
\address{Fedor Pakhomov, Department of Mathematics: Analysis, Logic and Discrete Mathematics, Ghent University, Krijgslaan 281 S8, 9000 Ghent, Belgium, and\newline
Steklov Mathematical Institute of the Russian Academy of Sciences. Ulitsa Gubkina 8, Moscow 117966, Russia}
\email{fedor.pakhomov@ugent.be}
\address{Giovanni Sold\`a, Department of Mathematics: Analysis, Logic and Discrete Mathematics, Ghent University, Krijgslaan 281 S8, 9000 Ghent, Belgium}
\email{giovanni.a.solda@gmail.com}
\thanks{The work of the first author has been funded by the FWO grant G0F8421N. 
The work of the second author has been funded by the FWO senior post-doctoral fellowship 1257725N.
}
\begin{document}

\maketitle

\begin{abstract}
    Generalized Higman's Theorem is the direct counterpart of Higman's Theorem that asserts the closure of the class of \emph{better} quasi-orders, instead of the class of \emph{well} quasi-orders, under the construction $P\mapsto P^{<\omega}$ of the embeddability order on finite sequences. %instead of the class well-quasi-orders asserts the closure of the class of better-quasi-orders (bqo) under constructions of embeddability orders on finite sequences $P\mapsto P^{<\omega}$. 
    Traditionally, this result is obtained as a consequence of very powerful and general techniques of Nash-Williams. In this paper, we propose a new proof of this result that is based on an explicit characterization of the underlying orders. In particular, this new technique allows us to formalize the proof of the result in the formal theory $\atr$, thus resolving a long-standing open problem in the field of reverse mathematics.

    The main ingredient of our proof is the introduction of a transfinite hierarchy of orders $\dot I^*_\alpha(P)$ starting with $\dot I^*_0(P)=P$ and $\dot I^*_{\alpha+1}(P)$ being the inclusion order on ideals of $\dot I^*_{\alpha}(P)$. On one hand, we show that a quasi-order $P$ is a bqo if and only if all $\dot I^*_\alpha(P)$ are wqos. On the other hand, under the assumption that $P$ is a bqo, we show that the $\dot I^*_\alpha(P^{<\omega})$ are wqos, and furthermore give a characterization of their structure in terms of a transfinite iteration of a Higman-like construction. The sufficiently explicit character of this proof allows us to formalize it in a rather straightforward manner.
\end{abstract}

\section{Introduction}

The main aim of this paper is to give a (to the best of our knowledge) new proof of the statement ``if $(P,\leq_P)$ is a wqo, then $(P^{<\omega},\preceq)$ is a bqo'', where $\preceq$ is the usual embeddability relation between sequences: this statement is sometimes referred to in the literature as Generalized Higman's Theorem, and we adopt this convention.
This result was originally implicitly proven by Nash-Williams in \cite{nwt-nash-williams}. He furthermore proved that $(P,\leq_P)$ is a bqo if and only if every set of transfinite sequences on $P$ ordered by embeddability is also a bqo.
We shall refer to this result in the following as Nash-Williams' Theorem.
Other than being an interesting combinatorial statement in itself, the Generalized Higman's Theorem is of particular interest from the point of view of reverse mathematics: indeed, it was shown by Marcone in \cite{nash-williams-marcone} that, over $\atr$, Nash-Williams' Theorem is equivalent to Generalized Higman's Theorem.
In the same paper, Marcone gave a proof of the Generalized Higman's Theorem in $\pica$; the proof was later modified by Towsner in \cite{partial-impred-towsner} to hold in the weaker system $\mathsf{TLPP}_0$. We are not aware of any better upper bound on the strength of the Generalized Higman's Theorem.
A major issue in adapting the classical proofs of Nash-Williams' Theorem is the fact that they rely on (some forms of) the Minimal Bad Array Lemma, which has been shown in \cite{min-bad-freund-pakhomov-solda} to be equivalent to $\Pi^1_2\mbox{-}\mathsf{CA}_0$ over $\atr$.
Indeed, a key ingredient in Marcone's proof of Generalized Higman's Theorem was the identification of a weaker form of the Minimal Bad Array Lemma, namely the so-called Locally Minimal Bad Array Lemma; he showed, however, that this principle is not only implied by but equivalent to $\pica$ over $\rca$.
One can then conclude, as was done in the aforementioned papers, that a proof of Generalized Higman's Theorem in weaker systems would most likely need to be quite different from the existing ones.

The main characters of the proof of Generalized Higman's Theorem we propose are iterated ideals $\dot I^*_\alpha(Q)$ of a qo $Q$, which we introduce in \Cref{sec:defin}.
We remark that the space of ideals of a wqo has already received attention before: most notably, it was shown in \cite{well-better-id-carroy-pequignot} that a qo $P$ is a bqo if and only if it is a wqo and the set of its non-principal ideals (ordered by inclusion) is a bqo, a result known as Pouzet's conjecture.
As the name suggests, the idea behind the iterated ideals is to look at a restriction of the qo given by the iteratively downward-closed sets $\dot V^*_\alpha(Q)$, which we used in \cite{nwfr-pakhomov-solda} (we also refer to \cite{3-bqo-freund}, where essentially the same sets were introduced).
In \Cref{sec:it-id-bqo}, we show that the iterated ideals still encode enough information about the order $Q$ to determine whether the latter is a bqo or not.
Namely, we show that $Q$ is an $\alpha$-wqo if and only if $\dot I^*_\alpha(Q)$ is a wqo.

It is therefore sufficient to show that if $P$ is a bqo, then $\dot I^*_\alpha(P^{<\omega})$ is a wqo for every $\alpha$. Firstly we reduce the wqo-ness of $\dot I^*_\alpha(P^{<\omega})$ to the wqo-ness of $\prim(\dot I^*_\alpha(P^{<\omega}))$, where $\prim(\dot I^*_\alpha(P^{<\omega}))$ is the set of primes with respect to the multiplication operation on $\dot I^*_\alpha(P^{<\omega})$ which is the canonical extension of the concatenation operation on $P^{<\omega}$ (see \Cref{sec:vq-iq-mult-qo}). After this reduction, in order to finish the proof our remaining goal is to construct order-reflecting maps from $\prim(\dot I^*_\alpha(P^{<\omega}))$ into $\dot V^*_\alpha(P\sqcup\{\star\})$; here, $\star$ is a fresh element incomparable with everything else, and the fact that all $\dot V^*_\alpha(P\sqcup\{\star\})$ are wqos is implied by the fact that $P$ is a bqo (see \cite{wq-trees-nash-williams,phd-pequignot,fraisee-conj-laver,nwfr-pakhomov-solda}). The construction of the maps is performed by transfinite recursion on $\alpha$, and is presented in \Cref{sec:main-theo}, while the structural characterization of $\prim(\dot I^*_{\alpha+1}(P^{<\omega}))$ in terms of $\prim(\dot I^*_\alpha(P^{<\omega}))$ crucially required for the successor step of the recursion is a special case of \Cref{theo:id-two-forms}, which we prove in \Cref{sec:id-combinatorics}.

Let us say a bit more about the results of \Cref{sec:id-combinatorics}: following the fundamental insight of Goubault-Larrecq et al. \cite{ideal-decomp-goubault-larrecq-halfon-karakndikar-narayan-schnoebelen} as well as Kabil and Pouzet \cite{id-higman-kabil-pouzet}, we show that, provided $Q$ is a monoid and a wqo under some extra natural conditions, the prime ideals of $Q$ are generated from the primes of $Q$ in just two ways. In \Cref{sec:concl} we discuss the relation between our construction and earlier developments of \cite{ideal-decomp-goubault-larrecq-halfon-karakndikar-narayan-schnoebelen,id-higman-kabil-pouzet} in more detail.

In the second part of \Cref{sec:vq-iq-mult-qo}, we deal with some technical details related to the formalization of these kinds of results in weak theories, and give some additional properties of primes that contribute to the construction in \Cref{sec:main-theo}.
Putting all this together, we can conclude that if $P$ is a bqo, so is $P^{<\omega}$, which we do in \Cref{cor:ght-atr}, where we also remark that $\atr$ suffices for this proof.
This proves that Generalized Higman's Theorem (and therefore Nash-Williams' Theorem) is equivalent to $\atr$ over $\rca$: the reversal is given by Shore's results in \cite{fraisse-conj-shore}, together with a small correction due to Freund and Manca in \cite{freund-manca-wwo}.
These results answer the second part of Question $24$ of \cite{open-questions-montalban}.

In the last section, \Cref{sec:comparing-el-gen-hig}, we establish a finer characterization of the orders $\dot I^*_\alpha(P^{<\omega})$ that is not required for the proof of our main result. Namely, we show that the orders $\dot I^*_\alpha(P^{<\omega})$ are isomorphic to what we call generalized Higman orderings: a certain variant of the embeddability ordering on finite sequences. 

In this paper, we prove our results in the theory $\prsou(\mathsf{ch})$, which is a rather weak system of set theory, where operations on sets should be given by primitive-recursive set functions relativized to a global choice function. On one hand, this theory allows us to develop relevant mathematical constructions in a very familiar set-theoretic fashion, with some extra care to make some definitions sufficiently ``constructive''. On the other hand, $\prsou(\mathsf{ch})$ admits an interpretation in $\atr$, thus allowing us to draw reverse-mathematical conclusions. Here we broadly follow a framework similar to our preceding paper \cite{nwfr-pakhomov-solda}, with the main difference being that the theory $\prsou(\mathsf{enu})$ used there included a global enumeration function $\mathsf{enu}$ that witnessed the countability of all the sets, instead of the milder inclusion of the global choice function in this paper.

\subsection*{Acknowledgments} 
We are very grateful to Andreas Weiermann, who pointed us to the paper \cite{ideal-decomp-goubault-larrecq-halfon-karakndikar-narayan-schnoebelen}, which was crucial for the development of the technique of the present paper. We thank Alakh Dhruv Chopra for several instructive conversations related to the questions that we study in \Cref{sec:it-id-bqo}.

\section{Basic definitions}\label{sec:defin}

The theory we will mostly be using for this paper is an extension of $\prsou$, i.e.\ primitive recursive set theory with urelements and $\omega$. We introduced this theory in \cite{nwfr-pakhomov-solda}, but we recommend the presentation given in \cite{phd-freund} for an introduction to it: there, essentially the same theory without urelements is presented, and the addition of urelements does not change much.

We are interested in extending the theory $\prsou$ with a choice function $\mathsf{ch}$, i.e.\ a function satisfying the axioms
\begin{itemize}
    \item $(x\neq \emptyset \wedge \neg\mathsf{U}(x)) \rightarrow \mathsf{ch}(x)\in x$, and
    \item $(x=\emptyset \vee \mathsf{U}(x)) \rightarrow \mathsf{ch}(x)=\emptyset$.
\end{itemize}
Of course, we furthermore allow $\mathsf{ch}$ to be used to define new functions using primitive set recursion and compositions.

We shall refer to this extension of $\prsou$ as $\prsou(\mathsf{ch})$.
We remark that the existence of such a function would be implied by the axiom of countability, and therefore $\prsou(\mathsf{ch})$ is weaker than the theory $\prsou(\mathsf{enu})$ used in the previous paper, which essentially extends $\prsou$ with the axiom of countability. The essential reason why we need a choice function is because we will need, at several points, to exploit a well-known property of wqos, namely that an upward-closed subset of a wqo only has finitely many minimal elements, which we need to find. The theory $\prsou$ alone does not seem capable of performing this operation.

%As done in \cite{nwfr-pakhomov-solda}, we work in the theory $\prsou$ of primitive recursive set theory with $\omega$ and urelements. 
For the rest of this section, we fix the language and the structure of urelements as, respectively, $\{\leq_Q\}$ and $(Q,\leq_Q)$, where $\leq_Q$ is a binary relation and $(Q,\leq_Q)$ is a quasi-order. We remark now that we will later expand the language with a function symbol, but the content of this section remains valid. 

In general, we use the notation $\equiv_Q$ for the equivalence relation induced by the pre-order $\leq_Q$ on $Q$, or simply $\equiv$ if the order $\leq_Q$ is sufficiently clear from the context, with one exception: we denote by $\sim^*_Q$ the equivalence relation induced by $\lesssim^*_Q$ (which we define below).

We now recall some fundamental definitions from \cite{nwfr-pakhomov-solda}: we refer to Section 4 of that paper for a more thorough discussion on the concept we now introduce.

\begin{Definition}\label{def:lesssim}
    ($\prsou$)
        We define a rank class function $\rank$ so that it is undefined on the class $Q$ of urelements, and 
    \[
    \rank(x)\leq\alpha \Leftrightarrow \forall y\in x(y\not\in Q \rightarrow \mathsf{rk}(y)<\alpha).
    \]  
  
    Let $\dot V(Q)$ consist of all elements of $V(Q)$ that are either urelements or contain an urelement in their transitive closure. Let $\dot V_{\alpha}(Q)$ be the subclass of $\dot V(Q)$ consisting of elements of rank $<\alpha$. 
    
  The pre-order $x \lesssim_Q^* y$ on $\dot V(Q)$ is defined by primitive recursion as follows:
  $$x\lesssim_Q^* y \iff \begin{cases} x\leq_Q y & \text{if }x,y\in Q\\ x\lesssim_Q^* \{y\} &\text{if }y\in Q \text{ and }x\not\in Q\\ \{x\}\lesssim_Q^*y  &\text{if }x\in Q \text{ and }y\not\in Q \\ \forall x'\in x\exists y'\in y(x'\lesssim_Q^*y') &\text{if }x,y\not\in Q\end{cases}$$ 
We denote by $\dot V^*(Q)$ and $\dot V^*_\alpha(Q)$ the quasi-orders $(\dot V(Q),\lesssim^*_Q)$ and $(\dot V_\alpha(Q),\lesssim^*_Q)$.
\end{Definition}

In this paper, a suborder of $\dot V^*(Q)$ will play a crucial role: the order of the \emph{iterated ideals}. Intuitively, if $\dot V^*(Q)$ can be seen (over a sufficiently strong theory) as being generated by iterating applications of the map $X\mapsto Q\cup \dot{\mathcal{P}}(X)$ (where $\dot{\mathcal{P}}$ is the operation returning the powerset minus the empty set), the order of iterated ideals is obtained by restricting $\dot{\mathcal{P}}$ to only produce sets that are upward-directed with respect to $\lesssim^*_Q$.

\begin{Definition}
  $(\prsou)$ 
  For every ordinal $\alpha$, we define the classes $\dot I^*_\alpha(Q)$ of elements of $\dot V^*_\alpha(Q)$ that are hereditarily upward-directed with respect to $\lesssim^*_Q$. Formally, by recursion on $\alpha$ we define:
  \[
    \dot I^*_\alpha(Q) = Q\cup \{ x\in \dot V^*_\alpha(Q):
    \forall x'\in x\exists \beta<\alpha (x'\in \dot I^*_\beta(Q))
    \text{ and } \forall y,z\in x \exists w\in x (y,z \lesssim_Q^* w) \}
  \]
  As standard, we abuse notation and denote by $\dot I^*_\alpha(Q)$ the qo
  $(\dot I^*_\alpha (Q),\lesssim_Q^*)$.

  We denote by $\dot I^*(Q)$ the subclass of $\dot V^*(Q)$ formed by elements that belong to some $\dot I^*_\alpha(Q)$.
%  \todo{modify this}
\end{Definition}

$\dot I^*_\alpha(Q)$ is clearly a suborder of $\dot V^*_\alpha(Q)$. It is easy to find examples where it is strictly smaller: for instance, if $Q$ contains two incomparable elements, then $\dot I^*_\alpha(Q)$ will also consist of just two incomparable elements for every $\alpha$, whereas $\dot V^*_\alpha(Q)$ consists of two incomparable elements and a chain of order type $\alpha$ above them.
It is equally easy (although admittedly not very informative) to find cases in which $\dot I^*_\alpha(Q)= \dot V^*_\alpha(Q)$ for every $\alpha$: namely, it is sufficient to let $Q$ be any linear order and to prove by simultaneous induction on $\alpha$ that all $\dot V^*_\alpha(Q)$ are linearly ordered by $\lesssim_Q^*$ and $\dot I^*_\alpha(Q)= \dot V^*_\alpha(Q)$.

\section{Iterated ideals and bqo-ness}\label{sec:it-id-bqo}

The first result is that iterated ideals code enough information to
determine if $Q$ is a bqo or not. The proof of the following Lemma is similar to that of \cite[Proposition 4.14]{nwfr-pakhomov-solda}, but with one important additional trick, heavily inspired by the proof of the Clopen Ramsey Theorem in $\atr$ given in \cite{book-simpson}.

All the terminology is identical to the one used in \cite{nwfr-pakhomov-solda}. We recall the relevant definitions for the reader's convenience, and because the choice of working in a weak theory like $\prsou$ implies that the definition of a front cannot be the standard one (namely, we need fronts to be equipped with a rank).

\begin{Definition}
The following definitions are given over $\prsou$.
\begin{itemize}
    \item A \emph{front} is a pair $(F,\mathsf{rk}_F)$ 
  where $F\subseteq [\omega]^{<\omega}$ is an infinite $\sqsubseteq$-antichain
  such that, for every $X\in [\bigcup F]^{\omega}$, there is $\sigma\in F$
  such that $\sigma\sqsubseteq X$, 
  and $\mathsf{rk}_F$ is a function from
  $$T_F=\{\sigma \in \bigl[ \bigcup F \bigr]^{<\omega}\mid
  \exists \tau \in F( \sigma\varsqsubsetneq \tau)\}$$
  to ordinals such that $\mathsf{rk}_F(\sigma)>\mathsf{rk}_F(\tau)$ for
  $\sigma\varsqsubsetneq\tau$ in $T_F$. 
  \item We say that the \emph{rank of $(F,\mathsf{rk}_F)$ is $\le \alpha$} if
  $\mathsf{rk}_F(\emptyset)\le\alpha$, and that it is $<\alpha$ if
  $\mathsf{rk}_F(\emptyset)<\alpha$.
  \item For non-empty $\sigma,\tau\in[\omega]^{<\omega}$ we put
  $$\sigma\triangleleft \tau  \stackrel{\mbox{\scriptsize\textrm{def}}}{\iff}\min(\sigma)<\min(\tau) \text{ and }\sigma \setminus \{ \min \sigma\} \text{ is $\sqsubseteq$-comparable with }\tau.$$
  \item An \emph{array} is a function $f\colon F\to Q$ from a front $F$ to a quasi-order $Q$. We say that $f$ has \emph{rank} $\le\alpha$ if $F$ has rank $\le \alpha$. We say that $f$ is \emph{bad} if $f(\sigma)\not\le_Q f(\tau)$ holds for every $\sigma,\tau\in F$ such that $\sigma\triangleleft \tau$. We say that $f$ is \emph{good} if it is not bad.
  \item   We say that a quasi-order $Q$ is an \emph{$\alpha$-well quasi-order} (henceforth \emph{\emph{$\alpha$}-wqo}) if it is well-founded (i.e., every non-empty subset of it has a minimal element) and there are no bad arrays $f\colon F\to Q$ of rank $<\alpha$.
  \item We say that that $Q$ is a \emph{better quasi-order} (henceforth \emph{bqo}) if it is an $\alpha$-wqo for every $\alpha$ (or equivalently, if there are no bad arrays whose codomain is $Q$).
\end{itemize}
\end{Definition}

\begin{Lemma}
  $(\prsou)$ Suppose that there are no bad sequences $f:\omega\to \dot I^*_\alpha(Q)$,
  then there are no bad arrays $g:F\to Q$ of rank $\leq\alpha$.
\end{Lemma}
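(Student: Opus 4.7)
The plan is to argue by contrapositive: given a bad array $g\colon F\to Q$ of rank $\le\alpha$, produce a bad sequence $f\colon\omega\to \dot I^*_\alpha(Q)$. Following the template of \cite[Proposition 4.14]{nwfr-pakhomov-solda}, I would define by recursion on $\mathsf{rk}_F(\sigma)$ an element $h(\sigma)\in\dot V^*(Q)$ for every $\sigma\in T_F\cup F$, setting $h(\sigma)=g(\sigma)$ when $\sigma\in F$ and building $h(\sigma)$ from the values $h(\tau)$ at suitable extensions $\tau$ of $\sigma$ when $\sigma\in T_F$. Bookkeeping on the recursion gives $h(\sigma)\in\dot V^*_{\mathsf{rk}_F(\sigma)+1}(Q)$, and the candidate bad sequence is $f(n)=h(\{n\})$ after re-indexing $\bigcup F$ as $\omega$.

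The badness of $f$ is then established by the standard ``chasing'' argument. Starting from a hypothetical witness $h(\{m\})\lesssim^*_Q h(\{n\})$ with $m<n$, one unfolds the definition of $\lesssim^*_Q$ on sets to produce, at each step, longer initial segments $\sigma_i\supseteq\{m\}$ and $\tau_i\supseteq\{n\}$ with $h(\sigma_i)\lesssim^*_Q h(\tau_i)$, extending $\sigma$ each time by copying the latest element added to $\tau$. The strictly descending sequence of $\mathsf{rk}_F(\sigma_i)$ forces termination with some $\sigma,\tau\in F$ for which $\sigma\setminus\{\min\sigma\}\sqsubseteq\tau$, whence $\sigma\triangleleft\tau$ and $g(\sigma)\le_Q g(\tau)$, contradicting the badness of $g$.

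The delicate new ingredient---the ``additional trick'' over \cite[Prop.~4.14]{nwfr-pakhomov-solda}---is ensuring that each $h(\sigma)$ with $\sigma\in T_F$ is hereditarily upward-directed with respect to $\lesssim^*_Q$, so that it really lies in $\dot I^*_\alpha(Q)$ and not merely in $\dot V^*_\alpha(Q)$. The naïve recipe $h(\sigma)=\{h(\sigma\cup\{k\}):k>\max\sigma,\ \sigma\cup\{k\}\in T_F\cup F\}$ lists pairwise unrelated branches and is not directed. Mirroring the technique used in Simpson's treatment of the Clopen Ramsey Theorem in $\atr$ \cite{book-simpson}, the fix is to enlarge $h(\sigma)$ by adjoining values $h(\tau)$ for longer extensions $\tau\supsetneq\sigma$ that serve as common upper bounds of already-present pairs, the extensions being picked uniformly via the global choice function $\mathsf{ch}$; the extra layer of rank budgeted in $\mathsf{rk}_F(\sigma)+1$ absorbs these additional witnesses. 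I expect this directedness step to be the main obstacle: the enrichment must simultaneously be definable by primitive set recursion relative to $\mathsf{ch}$, preserve the rank bound on $h(\sigma)$, and keep the chasing argument intact---once $h(\sigma)$ contains elements other than immediate children, the right-hand unfolding of $\lesssim^*_Q$ no longer directly yields a one-step extension of $\tau_i$ and must be projected back to one using further structural properties of the enriched $h$.
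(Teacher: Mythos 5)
There is a genuine gap. You correctly locate the crux --- making each $h(\sigma)$ upward-directed so that it lands in $\dot I^*_\alpha(Q)$ rather than merely $\dot V^*_\alpha(Q)$ --- but the mechanism you propose for achieving this does not work and is not the one the paper uses. Adjoining values $h(\tau)$ for longer extensions $\tau\supsetneq\sigma$ cannot supply common upper bounds for pairs already present in $h(\sigma)$: by an easy $\in$-induction, $y\in x$ implies $y\lesssim^*_Q x$, so the value at a deeper node is $\lesssim^*_Q$-\emph{below} the values at the nodes it extends, and values at incomparable nodes are simply unrelated. Moreover, the lemma is stated over $\prsou$, not $\prsou(\mathsf{ch})$, so the global choice function you invoke to pick these extensions is not available here.

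The missing idea is that the hypothesis must be used \emph{inside} the recursive construction, which is why the argument has to be run as a genuine proof by contradiction rather than by contrapositive. At each $\sigma\in T_F$ one considers the already-constructed sequence $n\mapsto \pi_0(h(\sigma\conc n))$ of values in $\dot I^*_{\mathsf{rk}_F\sigma}(Q)$; since there are no bad sequences into $\dot I^*_\alpha(Q)$ (hence none into $\dot I^*_{\mathsf{rk}_F\sigma}(Q)\subseteq \dot I^*_\alpha(Q)$), this sequence has a weakly ascending subsequence, and $h(\sigma)$ is taken to be the \emph{range of that subsequence}, which is directed for free. This forces the bookkeeping you do not have: each $h(\sigma)$ must carry an infinite index set $U_\sigma$ recording which children survive, coherent up to finite difference along the Kleene--Brouwer order of $T_F\cup F$ (along which the recursion is organized, using that the order type is at most $\omega^{\alpha+1}$). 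Your final worry about the chasing argument then evaporates rather than needing a fix: every element of $h(\tau)$ \emph{is} of the form $h(\tau\cup\{n\})$ for $n$ in the surviving index set, and weak ascendingness gives that cofinitely many of them dominate any fixed element, so the unfolding of $\lesssim^*_Q$ yields one-step extensions directly.
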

\begin{proof}
 % The argument is essentially a simplified version of \cite[V.9.4]{book-simpson}.

  We proceed by contradiction: suppose that $g:F\to Q$ is a bad array of rank $\leq\alpha$. We will use it to build a bad sequence $f$ into $\dot I^*_\alpha(Q)$.

  % Let $T^F=\{ \sigma\in [\omega]^{<\omega} : \exists \tau\in F(\sigma\sqsubseteq \tau)\}.$
  % Notice that $F$ of the rank $\leq\alpha$ implies that the order type of
  % $(T^F,\leq_{KB})$ is at most $\omega^{\alpha+1}$. Hence, we can proceed by recursion
  % on the order-type of $(T^F,\leq_{KB})$. It is immediate to modify $\rank T_F$ to be a
  % rank function on $T^F$

  Notice that $F$ having rank $\leq\alpha$ implies that the order type of
  $(T_F\cup F,\leq_{KB})$ is at most $\omega^{\alpha+1}$: hence, we can proceed by
  recursion on the order type of $(T_F\cup F,\leq_{KB})$. The aim of the recursion
  is to build a function $h:[\bigcup F]^{<\omega}\to \dot I^*_\alpha(Q)
  \times [\bigcup F]^{\omega}$.

  For every $\sigma\not\in T_F\cup F$, let $\tau$ be the
  unique string such that $\tau\in F$ and $\tau\sqsubset \sigma$; we set
  $h(\sigma)= (g(\tau),\bigcup F)$.

  We now start describing the recursion. The minimal element of $(T_F\cup F,\leq_{KB})$
  is an element of $F$, say $\sigma$, and we set $h(\sigma)=(g(\sigma),\bigcup F)$ ($\leq_{KB}$ denotes the Kleene-Brouwer ordering).

  At the start of every other stage, when we want to define $h(\sigma)$, we
  suppose that we have defined $h(\tau)$ for all $\tau\in T_F\cup F$ with
  $\tau<_{KB}\sigma$, say that $h(\tau)= (x_\tau, U_\tau)$. We further suppose that
  $x_\tau\in \dot I^*_{\mathsf{rk}_F \tau +1}(Q)$ if $\tau\in T_F$, $x_\tau\in Q$ if $\tau\in F$,
  and that $\tau_0<_{KB} \tau_1$ implies $U_{\tau_1}\subseteq^\infty U_{\tau_0}$,
  where $A\subseteq^\infty B$ means that $A\setminus B$ is finite.

  Let $K\in [\bigcup F]^\omega$ be such that $K\subseteq^\infty U_\tau$ for every
  $\tau<_{KB} \sigma$, with $\min K> \max \sigma$ (this set can be built easily by recursion on the order type of $(\{\tau\in T_F\cup F: \tau<_{KB} \sigma\},\leq_{KB})$). If $\sigma\in F$, then we set
  $U_\sigma=K$ and $h(\sigma)= (g(\sigma), U_\sigma)$. If instead $\sigma\in T_F$,
  notice that for every $n\in K$,
  $h(\sigma\conc n)$ has already been defined, and
  $\pi_0 (h(\sigma\conc n))\in \dot I^*_{\mathsf{rk}_F \sigma}(Q)$. Consider the sequence
  $e: K \to \dot I^*_{\mathsf{rk}_F \sigma}(Q)$ such that 
  $e(n)= \pi_0(h(\sigma\conc n))$.
  Since $\rank \sigma\leq\alpha$, $\dot I^*_{\mathsf{rk}_F \sigma}(Q)$ is a wqo, and so the sequence $e$ has
  a weakly ascending subsequence: let $U_\sigma$ be the domain of this weakly ascending subsequence.
  Hence, $e(U_\sigma)$ is upward-directed, and so is an iterated ideal: we put
  $x_\sigma= e(U_\sigma)$, and set $h(\sigma)=(x_\sigma,U_\sigma)$ as we just defined. It is easy to see that the assumptions we made to be able to carry out the recursion at the end of the previous paragraph are satisfied.  

  Let $H= U_{\emptyset}$, and define $f:H\to \dot I^*_\alpha(Q)$ as $f(n)=\pi_0(h(\{n\}))$.
  We verify that it is a bad sequence. For simplicity, we define $h'(\sigma)=
  \pi_0(h(\sigma))$ for every $\sigma\in [\bigcup F]^{<\omega}$. We will show the
  following claim: for every two strings $\sigma,\tau\in [H]^{<\omega}$ such that
  $\sigma\barprec \tau$, $|\sigma|=|\tau|$ and $h'(\sigma)\lesssim^*_Q h'(\tau)$,
  there are infinitely many $n\in H$ such that $h'(\sigma\cup\tau)
  \lesssim^*_Q h'(\tau \cup \{n\})$.

  If we prove the claim above, it follows that $f$ is bad. Indeed, suppose it was not,
  as witnessed by $m<n$. Since $\subseteq^\infty$ is a transitive relation,
  $H\subseteq^\infty U_\sigma$ for every $\sigma\in T_F\cup F$. Hence, using the claim, we
  can find $r\in H\cap U_{\{n\}} \cap U_{\{m\}}$ such that $h'(\{m,n\})\lesssim^*_Q
  h'(\{n,r\})$. Proceeding like this, we can find an infinite subset $X\subseteq H$
  such that, for every $l\in\omega$, $h'(X\rst_l)\lesssim^*_Q h'(X^-\rst_l)$, which
  contradicts the assumption that $g$ is bad.

  So we only have to prove the claim. Saying that $h'(\sigma)\lesssim^*_Q h'(\tau)$ means that
  for every $x\in h'(\sigma)$ there is $y\in h'(\tau)$ with $x\lesssim^*_Q y$, which
  in particular is true for $x=h'(\sigma\cup \tau)$. By the fact that 
  $h'(\tau)$ is a weakly ascending sequence, for cofinitely many elements $y$ of $h'(\tau)$ we have $x\lesssim^*_Q y$:
  since every one of these elements is $h'(\tau\cup\{n\})$ for some $n\in U_\tau$, 
  and since $H\subseteq^\infty U_\sigma$, we conclude that there are infinitely many $n$
  as above, as we wanted.
\end{proof}

%Although it is not technically true that $\dot I^*_\alpha(Q)$ is obtained from 
%$\dot I^*_\alpha(Q)$ by adding comparisons,
%the same proof as above shows that if there are no bad
%sequences $f:\omega\to \dot I_\alpha(Q)$, then there are no bad arrays $g:F\to Q$
%where $F$ is of the rank $\leq \alpha$.

The previous Lemma immediately yields the desired result.

\begin{Theorem}\label{theo:ideal-wqo-qo-bqo}
    ($\prsou$) For every $\alpha$, $\dot I^*_\alpha (Q)$ is a wqo if and only if there are no bad arrays $g:F \to Q$, where $F$ is a front of rank $\leq\alpha$.
\end{Theorem}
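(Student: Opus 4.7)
The proof breaks into the two directions of the biconditional.

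The forward direction is an immediate consequence of the preceding Lemma: assuming $\dot I^*_\alpha(Q)$ is a wqo, it admits no bad sequences $f:\omega\to\dot I^*_\alpha(Q)$, and the Lemma then rules out bad arrays $g:F\to Q$ with $F$ a front of rank $\leq\alpha$.

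For the backward direction, I would argue contrapositively, constructing from a putative bad sequence $(x_n)\subseteq \dot I^*_\alpha(Q)$ a bad array $g:F\to Q$ of rank $\leq \alpha$. The plan is to unfold each $x_n$ along the iterated ideal structure: if $x_n\in Q$, then we label $\{n\}$ by $x_n$ directly; otherwise $x_n$ is an upward-directed set whose members lie in $\dot I^*_\beta(Q)$ for some $\beta<\alpha$, and we recursively produce a sub-front $F_n\subseteq\{\sigma\in[\omega]^{<\omega}:\min \sigma = n\}$ of rank bounded by the $\dot I^*$-rank of $x_n$, with leaves labelled by elements of $Q$ drawn from the transitive closure of $x_n$ via the choice function $\mathsf{ch}$. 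Setting $F=\bigcup_n F_n$ then yields a front of rank $\leq\alpha$.

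The main obstacle will be the coherent selection of the witnesses so that the resulting labelling $g$ is genuinely bad: for each pair $n\triangleleft m$-related, the leaves of $F$ lying below $\{n\}$ must fail to be $\leq_Q$-dominated by those lying below $\{m\}$. This would require exploiting the upward-directedness of each $x_n\notin Q$ to amalgamate finitely many witnesses inside $x_n$, combined with a Ramsey-style selection of subsequences to handle the infinitely many $m>n$ simultaneously. Structurally, this mirrors the construction in the Lemma's proof but runs in the opposite direction, disassembling the iterated-ideal codomain rather than assembling it.
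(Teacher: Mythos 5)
Your forward direction is exactly the paper's: once $\dot I^*_\alpha(Q)$ is a wqo, the preceding Lemma immediately rules out bad arrays $g\colon F\to Q$ of rank $\leq\alpha$.

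The backward direction is where the problem lies, and it is twofold. First, you miss the short route the paper takes: $\dot I^*_\alpha(Q)$ is by definition a suborder of $\dot V^*_\alpha(Q)$, so a bad sequence in $\dot I^*_\alpha(Q)$ is already a bad sequence in $\dot V^*_\alpha(Q)$, and the existence of a bad array of rank $\leq\alpha$ into $Q$ then follows from the already-established correspondence between $\dot V^*_\alpha(Q)$ and arrays of rank $\leq\alpha$ (Proposition 4.12 of \cite{nwfr-pakhomov-solda}). No unfolding of the ideal structure is needed, and upward-directedness plays no role in this direction.

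Second, taken on its own terms, your direct unfolding is a plan whose decisive step is explicitly left open, and the tools you propose for that step are not the right ones. The point you flag as ``the main obstacle'' --- choosing the labels so that $g(\sigma)\not\le_Q g(\tau)$ whenever $\sigma\triangleleft\tau$ --- is the entire content of the argument. The witness extraction does not come from amalgamating finitely many elements inside $x_n$ via directedness (that is the mechanism of the \emph{other} direction, where weakly ascending subsequences are packaged into ideals); it comes from negating the clause $\forall x'\in x\,\exists y'\in y\,(x'\lesssim^*_Q y')$ in the definition of $\lesssim^*_Q$, which hands you an $x'\in x_n$ that fails to be $\lesssim^*_Q$-below \emph{every} element of $x_m$. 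Likewise, no Ramsey-style thinning is needed to ``handle the infinitely many $m>n$ simultaneously'': the dependence of the witness on the future is exactly what the front structure records, since the label at a node $\sigma$ may depend on $\sigma\setminus\{\min\sigma\}$, and $\triangleleft$ only tests badness against that recorded future. Until a recursion maintaining the invariant $h(\sigma)\not\lesssim^*_Q h(\tau)$ for $\sigma\triangleleft\tau$ is actually set up (which is precisely what the cited proposition does for $\dot V^*_\alpha(Q)$), the backward direction is not established.
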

\begin{proof}
    The previous Lemma gives the left-to-right implication. For the other one, notice that if $\dot I^*_\alpha (Q)$ is not a wqo, then neither is $\dot V^*_\alpha(Q)$, and the result follows from \cite[Proposition 4.12]{nwfr-pakhomov-solda}.
\end{proof}

\section{Multiplicative and monoidal (w)qos}\label{sec:id-combinatorics}

% In this Section, as anticipated, we extend the language $L$ of the structure of urelements with a binary function $\cdot$, which we shall call product: we will thus be working with triples $(Q,\leq_Q,\cdot)$.    
In this section, we study orders with a binary associative operation on them. We remark that, for this section only, the class $Q$ is not regarded as the class of urelements. The reason for doing so is the following: in the first part of the section, we will prove several properties of structures $(Q,\leq_Q,\cdot)$, which we then want to also hold for some other, more complicated entities, such as $(\dot{\mathcal{I}}(Q),\subseteq,\bullet)$. Since it is not the case that both structures can be urelements, we prefer to give the proofs in a more general framework. We will thus leave the class of urelements unspecified in this section.

We remark that this is, by and large, an aesthetic matter, and no real harm ensues if $Q$ was regarded as the class of urelements. 

As we stated in the Introduction, the main result of this section was inspired by \cite{ideal-decomp-goubault-larrecq-halfon-karakndikar-narayan-schnoebelen}. Furthermore, we point out that a closely related line of work was carried out in \cite{id-higman-kabil-pouzet}, where the ideal space over an ordered monoid is studied.
%: in the mentioned work, Kabil and Pouzet prove that (using the terminology we are about to introduce) for every monoidal wqo $Q$ the prime ideals are just ideals of $Q$ or multiplicatively closed ideals. This result, which we found after finishing the present paper, if proved in the appropriate base theory, could probably also be used for the results in the rest of this document. We remark that our main results in this Section, anyway, are neither implies nor imply the ones by Kabil and Pouzet: this is due to the fact that we also show that the $+$-property is passed to the space of ideals.

\begin{Definition}[$\prsou$]
Let $Q$ be a class, $\leq_Q$ be a primitive set-recursive, binary, reflexive and transitive relation, and $\cdot$ be a binary function on $Q$.
  We say that $(Q,\leq_Q,\cdot)$ is a \emph{multiplicative qo} if $(Q,\leq_Q)$ is a qo and
  $(Q,\cdot)$ is such that
  \begin{itemize}
  \item $\cdot$ is \emph{associative with respect to the equivalence} $\equiv$: for every $p,q,r\in Q$, $(p\cdot q)\cdot r \equiv p\cdot (q\cdot r)$.
  \item $\cdot$ is \emph{weakly increasing}: for every $q,p\in Q$, $q\leq_Q q\cdot p$.
  \item $\cdot$ is \emph{monotone}: for every $q,p,q',p'\in Q$ with
    $q\leq_Q q'$ and $p\leq_Q p'$, $q\cdot p \leq_Q q'\cdot p'$.
  \end{itemize}
  $(Q,\leq_Q,\cdot)$ is a \emph{multiplicative wqo} if $(Q,\leq_Q)$ is a wqo, and it is a
  \emph{multiplicative wfqo} if $(Q,\leq_Q)$ is a well-founded qo.

  $(Q,\leq_Q,\cdot)$ is a \emph{monoidal qo} if it is a multiplicative qo and $(Q,\cdot)$ is a monoid (i.e., it has a neutral element). \emph{Monoidal wqos} and \emph{monoidal wfqos} are defined in the obvious way. 
\end{Definition}

%From now on, we shall assume that the structure of the urelements forms a multiplicative qo. 
As customary, we will often omit $\cdot$ and denote $q\cdot p$ by $qp$.

\begin{Remark}
We list some easy observations on the previous Definition.
\begin{itemize}
    \item We notice that every multiplicative qo $(Q,\leq_Q,\cdot)$ is an ordered semigroup if $\leq_Q$ is a partial ordering. It is clearly not true that any ordered semigroup is a multiplicative qo (e.g., if $x\leq_Q y$ implies $x=y$, then $(Q,\leq_Q,\cdot)$ is an ordered semigroup, but not a multiplicative qo in general).
    \item In a monoidal qo, the neutral elements form the minimal $\equiv$-equivalence class of the order: indeed, by weak increasing-ness, for every $q\in Q$ and a neutral $e$ we have $e\leq_Q qe= q$. Though in multiplicative qos, the property of an element to be in the minimal $\equiv$-equivalence does not entail that the element is neutral, e.g. $(\mathbb{N}\setminus\{0\},\le,+)$ has $1$ as its least element, but $1$ is not neutral.%It is clearly not sufficient to be the minimal element in order to be a neutral element (e.g., consider $\omega\setminus \{0,1\}$ with the usual ordering and the usual product).
    \item The prototypical example of a multiplicative qo (and indeed, of a monoidal qo) $(Q,\leq_Q,\cdot)$ to have in mind is the case where $Q$ is $P^{<\omega}$, the class of finite sequences over a qo $P$, $\leq_Q$ is the usual embedding $\preceq$ of sequences, and $\cdot$ is the operation of concatenation of strings $\conc$.
    %\item In the following, we will be working with multiplicative qo's that enjoy another property, that we introduce in \Cref{def:+-prop}: we prove in \Cref{lem:neutral-el-wqo} that, in the presence of that extra property, then the difference between multiplicative wfqo and monoidal wfqo essentially disappears. 
    \item Most of the results we give in the rest of the paper use the assumption that $Q$ is a monoidal qo: this is in order to have a nicer definition of the prime elements. However, as we will see, most of the multiplicative qos that we will be working with will also enjoy the $+$-property (to be introduced in \Cref{def:+-prop}): we show in \Cref{lem:neutral-el-wqo} that every well-founded multiplicative qo with this additional property is also automatically a monoidal qo, so there is in essence no loss of generality in restricting to the latter class.
\end{itemize}
\end{Remark}

%We remark that most of the results we will prove in this Section would carry through if we only required $(Q,\cdot)$ to be a semigroup (i.e., if we did not require that a neutral element exists). We choose not to do that for essentially two reasons: the first is that the intended meaning of the product is the concatenation operation on the Higman ordering $P^{<\omega}$ on a qo $P$, which clearly is a monoid (the empty string being the neutral element). The second reason is that, in any case, we shall assume that the qo $(Q,\leq_Q)$ is indeed a monoidal wqo$^+$ (as defined below): under this assumption, as shown in \Cref{lem:neutral-el-wqo}, $Q$ has a minimal element, which is also the neutral element. The proof would work also with the assumption that $(Q,\cdot)$ is a semigroup instead of a monoid.

% An important consideration for the following is that, contrary to \cite{nwfr-pakhomov-solda}, here we will have to assume that the class of urelements is actually a set. The importance of this assumption will be seen at several points, the first of which is the following Definition.

Although the previous Definition was only assuming that $Q$ was a class, from now on we will actually need $Q$ to be a set: this will have repercussions in the following sections (where $Q$ will be regarded as the class of urelements again), and be an important difference with respect to \cite{nwfr-pakhomov-solda}, where it was not necessary for the urelements to form a set.

\begin{Definition}
    ($\prsou$) 
    Let $Q$ be a set and
    let $(Q,\leq_Q,\cdot)$ be a monoidal qo.
  We say that $p\in Q$ is \emph{prime} if $p$ is not equivalent to the identity and $p \equiv ab$ implies $p\equiv a$
  or $p\equiv b$. We denote by $\prim(Q)$ the set of all prime elements of $Q$.
\end{Definition}

The function $\prim$ would not be primitive set-recursive without the assumption of $Q$ being a set. 

\begin{Lemma}\label{lem:fin-fact}
        ($\prsou$)
  If $Q$ is a set and $(Q,\leq_Q,\cdot)$ is a monoidal wfqo, then every $q\in Q$ is equivalent to a finite
  product of primes, where the product of zero primes is set to be a neutral element.
\end{Lemma}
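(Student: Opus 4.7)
The plan is to proceed by well-founded induction on $(Q,\leq_Q)$, which is justified since $Q$ is a set and every non-empty subset of $Q$ has a $\leq_Q$-minimal element. Concretely, let $T\subseteq Q$ be the set of those $q\in Q$ that are $\equiv$-equivalent to the product $p_1\cdots p_n$ of some finite sequence $\langle p_1,\ldots,p_n\rangle\in \prim(Q)^{<\omega}$, with the empty product understood as a neutral element. This is a genuine set: $\prim(Q)$ is a set by the discussion preceding the lemma, $\prim(Q)^{<\omega}$ is then a set, and the iterated product is defined by primitive set recursion on the length, so $T$ arises as a union, indexed by $\prim(Q)^{<\omega}$, of $\Delta_0$-definable slices. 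The goal is to show $T=Q$; if not, $S=Q\setminus T$ is non-empty, so it has a $\leq_Q$-minimal element $q$, and the task reduces to deriving a contradiction from $q\in S$.

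I would then do a case analysis on this minimal $q$. If $q\equiv e$ for a neutral $e$, then $q$ is the empty product and hence already in $T$, a contradiction. If $q$ is prime, then $q\in T$ via the one-term product, again a contradiction. So $q$ is non-neutral and non-prime, which produces $a,b\in Q$ with $q\equiv ab$, $q\not\equiv a$, and $q\not\equiv b$. Weak increasing gives $a\leq_Q ab\equiv q$; using that the neutral element is $\leq_Q$-minimal together with monotonicity, $b\equiv eb\leq_Q ab\equiv q$. Combined with $q\not\equiv a$ and $q\not\equiv b$, I obtain the strict inequalities $a<_Q q$ and $b<_Q q$. By minimality of $q$ in $S$, both $a$ and $b$ belong to $T$, say $a\equiv p_1\cdots p_m$ and $b\equiv p'_1\cdots p'_n$, and monotonicity of $\cdot$ with respect to $\equiv$ together with associativity up to $\equiv$ then gives $q\equiv ab\equiv p_1\cdots p_m\cdot p'_1\cdots p'_n$, contradicting $q\in S$.

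The combinatorial content is essentially routine; the main step that deserves care is the formalisation inside $\prsou$. One must check that the predicate defining $T$, which involves an existential quantifier over the length of the factorisation, yields an actual set so that the minimum principle can be applied to $S$. This is precisely why the statement assumes that $Q$ is a set: under this assumption $\prim(Q)^{<\omega}$ is a set, the iterated product is a primitive recursive set function, and $T$ is exhibited as the image of such a function on a set, hence a set; without it, the whole argument would only go through at the level of definable classes and the minimum principle would be unavailable.
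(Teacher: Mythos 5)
Your proposal is correct and follows essentially the same route as the paper's proof: a minimal-counterexample argument via well-foundedness, splitting a non-prime, non-neutral minimal $q$ as $q\equiv ab$ with $a,b<_Q q$ and concatenating their factorizations. You are somewhat more explicit than the paper on two points it glosses over — that $b<_Q q$ requires combining the minimality of the neutral element with monotonicity (weak increasingness alone only gives $a\leq_Q ab$), and that the collection of factorizable elements is genuinely a set so that the minimum principle applies — but these are elaborations of the same argument, not a different one.
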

\begin{proof}
  Suppose not: since $Q$ is a set and $(Q,\leq_Q)$ is well-founded, by $\Delta_0$-induction we can find a $c\not\equiv e$ $\leq_Q$-minimal such that it is not prime
  but it is not expressible as a finite product of primes.
  Since $c$ is not prime,
  there are $a,b\in Q$ such that $c\equiv ab$ and $c$ is not equivalent to
  either $a$ or $b$. Hence $a<_Qc$ and $b<_Q c$, so by definition of $c$
  $a\equiv p_0\dots p_m$ and $b\equiv q_0\dots q_n$. Hence
  $c\equiv p_0\dots p_mq_0\dots q_n$, contradiction.
\end{proof}

We will be interested in multiplicative qos that enjoy a strong property, which we now introduce. This is again inspired by what happens when $(Q,\leq_Q,\cdot)$ is $(P^{<\omega},\preceq, {}\conc)$ for some qo $P$.

\begin{Definition}\label{def:+-prop}
    ($\prsou$)
  We say that $(Q,\leq_Q,\cdot)$ is a \emph{multiplicative qo}$^+$ if it is a multiplicative
  qo with the following additional property: for every $a,b,c\in Q$,
  if $c\leq_Q ab$, then there are $a'\leq_Q a$, $b'\leq_Q b$ such that
  $c\equiv a'b'$.

  The definitions of multiplicative wfqo$^+$, multiplicative wqo$^+$, monoidal qo$^+$, monoidal wfqo$^+$ and monoidal wqo$^+$ are the obvious modification of the one above.
  In general, we call the \emph{$+$-property} the characteristic property of multiplicative qos$^+$'s.
\end{Definition}

% Notice that if $Q$ is monoidal qo$^+$, then every prime $p$ has the property
% that $p\leq_Q ab$ implies $p\leq_Q a$ or $p\leq_Q b$.
As anticipated, we include a result that is not really needed in the rest of the paper, but that can help familiarize with multiplicative qos$^+$, and gives some additional information about the interaction between the order and the product operation.

\begin{Lemma}\label{lem:neutral-el-wqo}
    ($\prsou$)
  If $Q$ is a set and $(Q,\leq_Q,\cdot)$ is a multiplicative wfqo$^+$, then it has a neutral element, i.e. it is a monoidal wfqo$^+$.
\end{Lemma}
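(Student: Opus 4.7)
The plan is to extract a $\le_Q$-minimal element $q_0 \in Q$ (using well-foundedness, with $\mathsf{ch}$ available to make the choice if $Q$ has several minimal elements) and verify directly that $q_0$ serves as a two-sided neutral element; the $+$-property does essentially all of the work. Throughout I am assuming $Q$ is non-empty, since otherwise a neutral element cannot exist.

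First, weak increasing gives $q_0 \le_Q q_0 \cdot q_0$; applying the $+$-property produces $a', b' \le_Q q_0$ with $q_0 \equiv a' \cdot b'$, and minimality then forces $a' \equiv b' \equiv q_0$, so that $q_0 \cdot q_0 \equiv q_0$. For arbitrary $q \in Q$, weak increasing gives $q \le_Q q_0 \cdot q$, and the $+$-property yields a decomposition $q \equiv a' \cdot b'$ with $a' \le_Q q_0$ and $b' \le_Q q$; minimality forces $a' \equiv q_0$, whence $q \equiv q_0 \cdot b'$ for some $b' \le_Q q$. Combining these facts, associativity and the idempotence established above give
\[
q_0 \cdot q \;\equiv\; q_0 \cdot q_0 \cdot b' \;\equiv\; q_0 \cdot b' \;\equiv\; q.
\]
The dual calculation, starting from $q \le_Q q \cdot q_0$ and factoring accordingly, shows $q \cdot q_0 \equiv q$. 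Hence $q_0$ is a two-sided neutral element.

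The conceptual heart of the argument is the observation that any product having $q_0$ as a factor ``absorbs'' it, thanks to the ability of the $+$-property to shift factorizations across $\equiv$ combined with the minimality of $q_0$. Beyond this, no well-founded induction on $q$ is needed---one application of the $+$-property per direction suffices. The one small subtlety worth flagging is that the argument uses both the ``right'' instance $q \le_Q q \cdot p$ and the ``left'' instance $p \le_Q q \cdot p$ of weak increasing, consistent with how weak increasing is applied in the preceding Remark; I would therefore spell out both of these uses explicitly to avoid any ambiguity.
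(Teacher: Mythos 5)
Your proof is correct, and it takes a genuinely different route from the paper's in its second half. The paper likewise starts from a $\le_Q$-minimal element $m$ (after first checking that all minimal elements are equivalent, a step you rightly omit since it is not needed once a single neutral candidate is exhibited), but it then establishes $a\equiv am$ by a well-founded induction: a $\le_Q$-minimal counterexample $a<_Q am$ is factored via the $+$-property as $a\equiv a'm$ with $a'<_Q a$, and the minimality of the counterexample gives $a'm\equiv a'$, a contradiction. You replace that induction entirely by first deriving the idempotence $q_0q_0\equiv q_0$ from one application of the $+$-property plus minimality, and then absorbing $q_0$ into an arbitrary element by the direct computation $q_0 q\equiv q_0 q_0 b'\equiv q_0 b'\equiv q$. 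This is arguably cleaner: it isolates the single structural fact (idempotence of the minimal class) that makes absorption work and needs only one application of the $+$-property per side instead of an induction over $Q$. The caveat you flag is real but shared: both your argument and the paper's own proof of this lemma use the two-sided reading of weak increasing-ness (both $q\le_Q qp$ and $p\le_Q qp$), whereas the definition as literally written states only the left instance; since the paper's proof concludes $b\le_Q a$ from $a\equiv a'b'$ with $b\equiv b'$, the symmetric reading is evidently the intended one, so this is not a gap in your proof.
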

\begin{proof}
    By well-foundedness, we can consider the set of minimal elements of $Q$.
  Suppose for a contradiction that there are two minimal elements $a,b\in Q$
  that are not equivalent. Then $a\leq_Q ab$, so there are $a'\leq_Q a$,
  $b'\leq_Q b$ with $a\equiv a'b'$. By minimality, it follows that $a\equiv a'$
  and $b\equiv b'$, hence by weak increasing-ness $b\leq_Q a$, contradiction. This
  proves that $Q$ has a minimal element, call it $m$.

  %Notice that if $m\equiv ab$, then $a\equiv b\equiv m$. Moreover, $m\leq_Q m^2$
  %implies that $m\equiv m^2$ by the $+$-property (and so $m\equiv m^n$ for every positive $n\in \omega$).

    We now show that $a\equiv am$ for all $a\in Q$. Suppose not, then by well-foundedness there is a $\leq_Q$-minimal $a$ such that $a\not\equiv am$, which entails $a<_Q am$. By the $+$-property, there is $a'\leq_Q a$ with $a\equiv a'm$, and by assumption on $a$ we can assume that $a'<_Q a$. But then we know that $a'm\equiv a'$, contradiction. 
    
    The fact that $a\equiv ma$ is proven similarly.
%    
%  Now suppose that $p\in Q$ is prime and $p\not\equiv m$. Since $p\leq_Q pm$, there are
  %$p'\leq_Q p$ and $m'\leq_Q m$ with $p\equiv p'm'$. Since $m'\equiv m$, by the fact
  %that $p$ is prime it follows that $p'\equiv p$, so we conclude that $pm\equiv m$.
  %Similarly we show that $mp\equiv p$.
%
%  Finally, given any $q\in Q$, take any decomposition of $q$ into primes: using the
 % previous results it is easy to see that $q\equiv qm \equiv mq$.
\end{proof}

We can now then limit ourselves to consider monoidal qos$^+$.

\begin{Lemma}\label{lem:qo+-prod}
    ($\prsou$)
  If $Q$ is a set and $(Q,\leq_Q,\cdot)$ is a monoidal qo$^+$, $p$ is prime and 
  $p\leq_Q a_0\cdot\dots\cdot a_n$, then there is $i\leq n$ with
  $p\leq_Q a_i$.
\end{Lemma}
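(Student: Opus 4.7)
The plan is to proceed by induction on $n$. The base case $n=0$ is immediate: the hypothesis reads $p \leq_Q a_0$, so we take $i = 0$.

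For the inductive step, assume the statement holds for all products of length $n$, and suppose $p \leq_Q a_0 \cdot a_1 \cdots a_n \cdot a_{n+1}$. By associativity up to $\equiv$, this is equivalent to $p \leq_Q (a_0 \cdots a_n) \cdot a_{n+1}$; set $b = a_0 \cdots a_n$. Applying the $+$-property to the inequality $p \leq_Q b \cdot a_{n+1}$, we obtain $b' \leq_Q b$ and $a' \leq_Q a_{n+1}$ such that $p \equiv b' a'$. Since $p$ is prime, primality forces either $p \equiv b'$ or $p \equiv a'$.

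If $p \equiv a'$, then $p \leq_Q a_{n+1}$ and we are done by taking $i = n+1$. If instead $p \equiv b'$, then $p \leq_Q b = a_0 \cdots a_n$, and the inductive hypothesis yields some $i \leq n$ with $p \leq_Q a_i$.

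I do not anticipate a serious obstacle here: the argument is essentially just unpacking the definition of prime and applying the $+$-property once, with associativity-up-to-$\equiv$ and monotonicity ensuring that the reparenthesization $a_0 \cdots a_{n+1} \equiv (a_0 \cdots a_n) \cdot a_{n+1}$ preserves the order relation $p \leq_Q \cdot$. The only minor point worth noting is that the induction is carried out in $\prsou$, but since $n$ ranges over natural numbers and the statement is $\Delta_0$ in the relevant parameters, standard induction in $\prsou$ suffices.
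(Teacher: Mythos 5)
Your proof is correct and follows essentially the same route as the paper's: induction on $n$, regrouping via associativity, one application of the $+$-property, and then primality to split into the two cases. The closing remark about the induction being $\Delta_0$ and hence available in $\prsou$ is a sensible addition, though the paper leaves it implicit.
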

\begin{proof}
  By induction on $n$. If $n=0$, the claim is trivial. Suppose it holds for
  $n$, we prove it for $n+1$. $p\leq_Q a_0\cdot\ldots\cdot a_n\cdot a_{n+1}$ implies
  $p\leq_Q (a_0\cdot\ldots \cdot a_n)a_{n+1}$, so by definition of multiplicative qo$^+$
  there are $a'\leq_Q a_0\cdot\ldots\cdot a_n$ and $b'\leq_Q a_{n+1}$ such
  that $p\equiv a'b'$. Since $p$ is prime, either $p\equiv a'$ or $p\equiv b'$
  holds. If the latter holds, then $p\leq_Q a_{n+1}$, as we wanted. If the former
  holds, then $p\leq_Q a_0\cdot\ldots\cdot a_n$ holds, and we conclude by induction.
\end{proof}

%Although this will not have a huge importance in the rest of the paper,
%we notice that monoidal wqo$^+$ always have a minimal element, which also happens
%to be the neutral element for $\cdot$.

We now move on to consider downward-closed subsets of $Q$.

\begin{Definition}
    ($\prsou$)
    Let $Q$ be a set and $(Q,\leq_Q)$ be a qo. We denote as $\dot{\mathcal{D}}(Q)$ the primitive recursive class of non-empty downward-closed (with respect to $\le_Q$) subsets of $Q$.  
    %We define the $\Delta_0$ formula $\varphi(x)$:
    %\[
    %\varphi(x) \Leftrightarrow (
    %   x\neq \emptyset \, \wedge \, \forall y\in x (y\in Q) 
    %   \, \wedge \, \forall z\in Q (\exists y\in x ( z\leq_Q y) \rightarrow
    %   z\in x)).
    %\]
    %We call $\dot{ \mathcal{D}}(Q)$ the class of sets that satisfy $\varphi$.
    %If $\varphi(x)$ holds, we say that $x\in \dot{ \mathcal{D}}(Q)$, and otherwise we write $x\not\in \dot{ \mathcal{D}}(Q)$. 

    Let $(Q,\leq_Q,\cdot)$ be a multiplicative qo.
    We define the primitive recursive class function $\bullet$ on $\dot{\mathcal{D}}(Q)$ as
      \[
        x\bullet y = \{ z\in Q: \exists x'\in x,y'\in y(z\leq_Q x'y')\}.
    \]
\end{Definition}
Note that, since $\prsou$ has no powerset axiom, we in general cannot assert that $\dot{\mathcal{D}}(Q)$ is a set. The class $\dot{\mathcal{D}}(Q)$ is primitive recursive, since if $Q$ is a set, the downward-closedness of a subset of $Q$ is $\Delta_0$-expressible.

%In essence, $\dot{ \mathcal{D}}(Q)$ is the class of non-empty downward-closed subsets of $Q$, and we equip it with the operation $\bullet$ such that, given sets $x$ and $y$, produces the downward closure of the set of products of elements of $x$ times some element of $y$. 
For the rest of this section, to enhance readability, we will refer to elements of $Q$ with lower-case letters, and to elements of $\dot{\mathcal{D}}(Q)$ with upper-case letters.

Again, we will shorten $A\bullet B$ to $AB$. 

The following result is not surprising.

\begin{Lemma}
        ($\prsou$)
  If $Q$ is a set and $(Q,\leq_Q,\cdot)$ is a monoidal qo, then so is $(\dot{\mathcal{D}}(Q),\subseteq, \bullet)$.
%  Under the previous assumptions, $(Q,\leq_Q,\cdot)$ is a monoidal wqo
%  if and only if $(D(Q),\subseteq,\bullet)$ is monoidal wfqo.
\end{Lemma}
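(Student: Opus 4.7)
The plan is to verify each condition in the definition of monoidal qo for $(\dot{\mathcal{D}}(Q),\subseteq,\bullet)$ in turn; since everything reduces to routine unfolding of definitions, I do not expect any substantive obstacle, though some care is needed to distinguish the roles of $\equiv_Q$ on $Q$ and of the equivalence induced by $\subseteq$ on $\dot{\mathcal{D}}(Q)$, which is literal equality of sets. First I would check that $\bullet$ actually lands in $\dot{\mathcal{D}}(Q)$: given $A,B \in \dot{\mathcal{D}}(Q)$, the set $A\bullet B$ is downward-closed by transitivity of $\leq_Q$ (from $z' \leq_Q z \leq_Q ab$ one gets $z' \leq_Q ab$), and it is non-empty because $ab \in A\bullet B$ for any choice of $a\in A$, $b \in B$. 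The pair $(\dot{\mathcal{D}}(Q),\subseteq)$ is trivially a qo.

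For associativity I would show that both $(A\bullet B)\bullet C$ and $A\bullet(B\bullet C)$ coincide, as sets, with $\{z \in Q : \exists a\in A, b\in B, c\in C,\ z \leq_Q (ab)c\}$. The first equality uses monotonicity of $\cdot$ in $Q$ to collapse two chained $\leq_Q$-witnesses (if $u \leq_Q ab$ and $z\leq_Q uc$, then $z \leq_Q (ab)c$, and conversely $ab$ itself serves as witness $u$); the second uses the reflexivity $bc \leq_Q bc$ to witness $bc \in B\bullet C$, together with associativity of $\cdot$ in $Q$ modulo $\equiv_Q$ and the fact that $\equiv_Q$-equivalent elements of $Q$ have the same $\leq_Q$-downward set. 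Weak increasing-ness $A \subseteq A\bullet B$ is immediate from the weak increasing-ness of $\cdot$ on $Q$: for $a \in A$, pick any $b\in B$ (non-empty), then $a\leq_Q ab$; monotonicity of $\bullet$ under $\subseteq$ follows directly from the definition.

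For the neutral element, let $e$ be a neutral element of $Q$ and set $E = \{q \in Q : q \leq_Q e\}$, which clearly lies in $\dot{\mathcal{D}}(Q)$. For any $A \in \dot{\mathcal{D}}(Q)$ and any $z \in E\bullet A$, there are $d\leq_Q e$ and $a \in A$ with $z\leq_Q da \leq_Q ea = a$ by monotonicity and neutrality, hence $z \in A$ by downward-closedness; conversely, any $z \in A$ satisfies $z = ez$, so $z \leq_Q ez$ with $e \in E$ and $z\in A$, yielding $z \in E\bullet A$. The symmetric argument gives $A\bullet E = A$, completing the verification that $E$ is a neutral element of $\bullet$ on $\dot{\mathcal{D}}(Q)$.
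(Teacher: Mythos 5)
Your proof is correct and takes essentially the same route as the paper's (which is a one-line verification that associativity, monotonicity and weak increasing-ness lift from $\cdot$ to $\bullet$, plus exhibiting a neutral element). The only difference is your choice of neutral element $E = e{\downarrow}$ rather than the paper's $E=\{e\}$; yours is in fact the more careful option, since $\{e\}$ need not be downward-closed when $Q$ contains elements equivalent to but distinct from $e$.
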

\begin{proof}
  Associativity of $\bullet$ follows trivially from associativity of $\cdot$,
  and the same goes for monotonicity and weak increasing-ness. Notice that $E:=\{e\}$ is a downward-closed set, and $A= AE= EA$ for every downward-closed set $A$.
%  The fact that $(D(Q),\subseteq)$ is well-founded if and only if
%  $(Q,\leq_Q)$ is wqo is standard.
\end{proof}

In the following, given a qo $(Q,\leq_Q)$ and an element $q\in Q$, $q\uparrow$ denotes the set $\{p\in Q: q\leq_Q p\}$, and $q\downarrow$ denotes the set $\{p\in Q: p\leq_Q q\}$.

\begin{Lemma}
    ($\prsou(\mathsf{ch})$)
    If $Q$ is a set and $(Q,\leq_Q)$ is a wqo, then $\dot{\mathcal{D}}(Q)$ is a set and $(\dot{\mathcal{D}(Q)},\subseteq)$ is a well-founded qo.
\end{Lemma}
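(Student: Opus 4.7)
My plan is to reduce the statement to the classical structural description of non-empty downward-closed sets in a wqo. First, I would show that $\dot{\mathcal{D}}(Q)$ is a set. Given $D \in \dot{\mathcal{D}}(Q)$, the complement $U_D = Q \setminus D$ is upward-closed in $Q$. The well-foundedness clause of the wqo hypothesis guarantees that every $x \in U_D$ lies $\geq_Q$-above some $\leq_Q$-minimal element of $U_D$, so $U_D = \bigcup_{m \in \mathsf{Min}(U_D)} m{\uparrow}$. Any two distinct $\equiv_Q$-classes of minimal elements of $U_D$ are pairwise incomparable (else one would strictly dominate the other, contradicting minimality), so applying $\mathsf{ch}$ to each such class carves out a transversal $M(D) \subseteq Q$ with $U_D = \bigcup_{m \in M(D)} m{\uparrow}$. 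Since $M(D)$ is an antichain in the wqo $Q$, it is finite. The assignment $D \mapsto M(D)$ is then an injection from $\dot{\mathcal{D}}(Q)$ into the primitive-recursive set $[Q]^{<\omega}$ of finite subsets of $Q$, with $D$ recoverable as $Q \setminus \bigcup_{m \in M(D)} m{\uparrow}$; so $\dot{\mathcal{D}}(Q)$ is a set.

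For well-foundedness, I would argue by contradiction: suppose $(D_n)_{n \in \omega}$ is an infinite strictly $\subseteq$-descending sequence in $\dot{\mathcal{D}}(Q)$, which one extracts from a hypothetical minimum-less non-empty subset of $\dot{\mathcal{D}}(Q)$ by iterating $\mathsf{ch}$ in the standard dependent-choice fashion. Set $a_n := \mathsf{ch}(D_n \setminus D_{n+1})$, so that $a_n \in D_n \setminus D_{n+1}$. Applying the wqo property of $Q$ to the sequence $(a_n)$, there are $i < j$ with $a_i \leq_Q a_j$. Since $a_j \in D_j \subseteq D_{i+1}$ and $D_{i+1}$ is downward-closed, we conclude $a_i \in D_{i+1}$, contradicting $a_i \notin D_{i+1}$.

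The principal obstacle I anticipate is the careful use of $\mathsf{ch}$ in the first step: extracting the finite antichain $M(D)$ in a way compatible with the primitive recursive in $\mathsf{ch}$ machinery of $\prsou(\mathsf{ch})$ requires picking representatives of the $\equiv_Q$-equivalence classes of $\mathsf{Min}(U_D)$, which is precisely the use of the choice function announced in the introductory discussion of $\prsou(\mathsf{ch})$. Everything else is routine: the well-foundedness step is a standard wqo argument, and the reconstruction of $D$ from $M(D)$ is immediate from the downward-closedness of $D$.
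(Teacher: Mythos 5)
Your proposal is correct and follows essentially the same route as the paper: both reduce the claim to the fact that every upward-closed subset of a wqo is a finite union of principal up-cones (so that $\dot{\mathcal{D}}(Q)$ injects into $[Q]^{<\omega}$), and both obtain well-foundedness by using $\mathsf{ch}$ to turn a descending sequence of downward-closed sets into a bad sequence in $Q$. The only cosmetic difference is in the finite-generation step: the paper greedily picks $u_i\in U\setminus\bigcup_{j<i}u_j\uparrow$ and gets a bad sequence directly, whereas you go through minimal elements and the finiteness of antichains, which in $\prsou(\mathsf{ch})$ ultimately requires the same bad-sequence extraction anyway.
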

\begin{proof}
    This Lemma states a rather standard fact in the theory of wqos (see e.g.\ \cite[Section 4]{theory-relations-fraisse}): we give some details regarding the proof because this is the point where we start using the global choice function that $\prsou(\mathsf{ch})$ provides.

    Every downward-closed subset of $Q$ is the complement of an upward-closed subset. We claim that every upward-closed subset of $Q$ is of the form $\bigcup_{a\in A} a\uparrow$ for a finite subset $A$ of $Q$. If this was not the case, let $U$ be an upward-closed subset of $Q$ witnessing this: then, using the global choice function, we can use $U$ to produce an infinite bad sequence $(u_i)_{i\in\omega}$ in $Q$ by picking as $u_{i}$ an element of the non-empty set $U\setminus \bigcup_{j < i} u_j\uparrow$ (where clearly $U\setminus \bigcup_{j < 0} u_j\uparrow = U$). 

    %With the assumptions above, 
    Hence, $\dot{\mathcal{D}}(Q)$ can be seen as the set 
    \[
    \{ Q\setminus \bigcup_{a\in A} a\uparrow : A\in [Q]^{<\omega}\}\setminus \{\emptyset\}.
    \]
    %since in a wqo every upward-closed set only has finitely many minimal elements (a proof of this fact can be found in any standard reference on wqo's, like \cite[Section 4]{theory-relations-fraisse}, and it is easily seen that it goes through in a weak base theory), and every downward closed set is the complement of an upward-closed one.

    %The fact that $(\dot{ \mathcal{D}}(Q),\subseteq)$ is well-founded is standard.
    %It is a standard fact that, if the set $Q$ is a wqo, then $\dot{ \mathcal{D}}(Q)$ is actually a set, and the ordering $\subseteq$ is well-founded (again, see for instance \cite[Section 4.4.1]{theory-relations-fraisse}, or any introduction to wqo theory). %this is essentially the same argument as in the implication $1\Rightarrow 2$ in \cite[Corollary 5.14]{nwfr-pakhomov-solda} with $\alpha=1$. \todo{why enu there?}
    
    In a similar fashion, one can extract an infinite bad sequence in $Q$ from an infinite descending sequence in $\dot{\mathcal{D}}(Q)$.
\end{proof}

With this, we can proceed in the study of $(\dot{\mathcal{D}}(Q),\subseteq,\bullet)$ as a monoidal qo. 

\begin{Lemma}\label{lem:prod-dec-dc}
($\prsou(\mathsf{ch})$)
  Let $Q$ be a set and let $(Q,\leq_Q,\cdot)$ be a monoidal wqo$^+$. Then for every
  $A,B,C\in \dot{\mathcal{D}}(Q)$, if $C\subseteq AB$, there is a number $n\in\omega$
  and sets $A_0,\dots, A_{n-1}$, $B_0,\dots,B_{n-1}$ in $\dot{\mathcal{D}}(Q)$ such that
  for every $i<n$ we have $A_i\subseteq A$ and $B_i\subseteq B$, and
  $C=\bigcup_{i<n} A_iB_i$.
\end{Lemma}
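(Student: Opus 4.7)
The plan is to reduce the problem to analysing the downward-closed set
\[
T = \{(a, b) \in A \times B : ab \in C\}
\]
inside the product $A \times B$. Since $A$ and $B$ are sub-wqos of $Q$, their product is a wqo in the product order, and $T$ is downward-closed because $\cdot$ is monotone and $C$ is downward-closed. Adapting to $(A \times B) \setminus T$ the argument based on the choice function $\mathsf{ch}$ that was used in the previous Lemma to show that $\dot{\mathcal{D}}(Q)$ is a set, I would extract finitely many minimal elements $(\alpha_1, \beta_1), \ldots, (\alpha_k, \beta_k)$ generating the upward-closed complement of $T$, so that
\[
(a, b) \in T \iff \forall i \leq k\ (\alpha_i \not\leq_Q a \ \text{ or }\ \beta_i \not\leq_Q b).
\]

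The next step is the key combinatorial move, converting this ``conjunction of disjunctions'' characterisation of $T$ into a union of products. For each function $f : \{1, \ldots, k\} \to \{0, 1\}$, I would set
\[
A_f = A \setminus \bigcup_{i \in f^{-1}(0)} \alpha_i\uparrow, \qquad B_f = B \setminus \bigcup_{i \in f^{-1}(1)} \beta_i\uparrow.
\]
Each $A_f$ and $B_f$ is a downward-closed subset of $Q$, and a straightforward Boolean manipulation shows $T = \bigcup_{f \in 2^k} A_f \times B_f$: given $(a,b)\in T$, define $f(i)=0$ whenever $\alpha_i\not\leq_Q a$ and $f(i)=1$ otherwise, then check that $(a,b)\in A_f\times B_f$; conversely, every $(a,b)\in A_f\times B_f$ satisfies the defining condition of $T$ for each $i$.

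With $T$ decomposed as a finite union of products $A_f \times B_f$, the proof concludes by showing $C = \bigcup_{f} A_f \bullet B_f$, where the union is restricted to those $f$ for which both $A_f$ and $B_f$ are non-empty (so that $A_f \bullet B_f \in \dot{\mathcal{D}}(Q)$). The inclusion $\supseteq$ is immediate from $A_f \times B_f \subseteq T$ and the downward-closedness of $C$. The inclusion $\subseteq$ is the delicate point, and the place where the $+$-property is essential: given $c \in C \subseteq AB$, we first find $a \in A$, $b \in B$ with $c \leq_Q ab$, and then the $+$-property refines this to $a' \leq_Q a$, $b' \leq_Q b$ with $c \equiv a'b'$; this yields $(a', b') \in T$, hence $(a', b') \in A_f \times B_f$ for some $f$, and therefore $c \in A_f \bullet B_f$. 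Without the $+$-property one would only get a pair with $c \leq_Q ab$ and no guarantee that $ab \in C$, so the witness could not be lifted back into $T$; this is the conceptual obstacle that the $+$-property is tailored to overcome.
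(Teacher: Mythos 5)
Your proof is correct, but it takes a genuinely different route from the paper's. You decompose the downward-closed set $T=\{(a,b)\in A\times B: ab\in C\}$ by taking a finite basis $(\alpha_1,\beta_1),\dots,(\alpha_k,\beta_k)$ of its upward-closed complement in the product wqo $A\times B$ and then performing a $2^k$-indexed Boolean splitting into boxes $A_f\times B_f$; the paper instead works with the sections $B_a=\{b\in B: ab\in C\}$, shows via Ramsey's theorem for pairs that the preorder $a\preceq a'\iff B_{a'}\subseteq B_a$ (being simultaneously a wqo and conversely well-founded) has only finitely many equivalence classes, and takes $B_i=B_{a_i}$ for representatives $a_i$ together with $A_i=\{a: \forall b\in B_i\,(ab\in C)\}$. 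Both arguments lean on comparable wqo infrastructure --- your product-of-wqos fact and the paper's finitely-many-classes claim each reduce to Ramsey for pairs, and both need the choice function $\mathsf{ch}$ to extract finite bases --- and both invoke the $+$-property at exactly the same point, namely to lift $c\in C$ with $c\le_Q ab$ to a pair $(a',b')$ with $a'b'\equiv c$, which is what puts the witness back inside $T$ (respectively inside some $A_i\times B_i$). Your version has the minor virtues of an explicit bound $n\le 2^k$ and of isolating the purely order-theoretic content (a box decomposition of a downward-closed subset of a product) from the algebraic content; the paper's version avoids mentioning the product order at all and produces a canonical list of $B_i$'s, namely all possible values of the sections. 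The only points worth making explicit in your write-up are that $T$ is downward-closed in $A\times B$ (monotonicity of $\cdot$ plus downward-closedness of $C$, which you note) and that $a'b'\in C$ because $C$ is closed under the equivalence $\equiv$; neither is a gap.
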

\begin{proof}
  For every $a\in A$, let us consider $B_a=\{ b\in B: ab\in C\}$. We claim that there are only finitely many distinct values of $B_a$. Notice that
  every $B_a$ is downward-closed. Moreover, if $a\leq_Q a'$, then
  $B_{a'}\subseteq B_a$. Therefore the relation $a\preceq a'\iff B_{a'}\subseteq B_a$ on $Q$ is both a wqo and conversely well-founded. Observe that it entails that $\preceq$ has finitely many equivalence classes. Indeed, by the use of the choice function we construct a possibly finite sequence $a_0,a_1,\ldots$ such that for each $n$ either $a_{n-1}$ is the last element of the sequence or $a_n$ is not in the same $\preceq$ class as any of $a_0,\ldots,a_{n-1}$. If the sequence construction process has terminated at a finite stage, then we are done. Otherwise, by the use of Ramsey's Theorem for pairs, we see that the sequence of $a_i$'s has an infinite subsequence that either forms an infinite antichain or an infinite descending chain or an infinite ascending chain. The first and the second options are not possible since $\preceq$ is a wqo and the third option is not possible since $\preceq$ is conversely well-founded.
  
  We pick $a_0,\ldots,a_{n-1}$ such that $\{B_{a_i}\mid i<n\}$ contains all possible values of $B_{a}$. We put $B_i=B_{a_i}$. Let $A_{i}=\{a\in A \mid \forall b\in B_{i} (ab\in C)\}$. We claim that the $A_i$'s and $B_i$'s that we constructed satisfy the claim of the lemma. Indeed given $c\in C$, there are $a\in A$ and $b\in B$ such that $c\le_Q ab$. Since $Q$ is wqo$^+$, there are $a'\le_Q a$ and $b'\le_Q b$ such that $c\equiv a'b'$. To finish the proof it will be enough to find $i$ such that $a'\in A_i$ and $b'\in B_i$.
  
  We pick the unique $i$ such that $B_i=B_{a_i}=B_{a'}$. To see that $a'\in A_i$ we have to verify that $\forall b\in B_{i} (a'b\in C)$, which holds since $B_i=B_{a'}$ and $B_{a'}$ is precisely the set of $b\in B$ such that $a'b\in C$.  And to see that $b'\in B_i=B_{a'}$ we have to verify that $a'b'\in C$, which of course holds since $a'b'\equiv c\in C$.\end{proof}

We now move to consider the space of ideals of $Q$.

\begin{Definition}
    ($\prsou$)
    Let $Q$ be a set and $(Q,\leq_Q)$ be a qo. Then $\dot{\mathcal{I}}(Q)$ is the subclass of $\dot{\mathcal{D}}(Q)$ that only contains upward-directed sets, i.e.
    \[
        A\in \dot{\mathcal{I}}(Q) \Leftrightarrow A\in \dot{\mathcal{D}}(Q) \, \wedge \,
        \forall a,b\in A \exists c\in A (a\leq_Q c\wedge b\leq_Q c).
    \]
\end{Definition}

It is clear that the class $\dot{\mathcal{I}}(Q)$ is primitive recursive. Elements of $\dot {\mathcal{I}}(Q)$ are called ideals of $Q$.

\begin{Lemma}\label{lem:id-mult-qo}
    ($\prsou(\mathsf{ch})$)
  If $Q$ is a set and $(Q,\leq_Q,\cdot)$ is a monoidal qo, then so is $(\dot{\mathcal{I}}(Q),\subseteq, \bullet)$.
  Under the previous assumptions, if $(Q,\leq_Q)$ is a wqo, then
   $(\dot{\mathcal{I}}(Q),\subseteq,\bullet)$ is a monoidal wfqo.
\end{Lemma}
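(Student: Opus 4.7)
The plan is to verify each clause in the definition of monoidal qo for $(\dot{\mathcal{I}}(Q),\subseteq,\bullet)$, exploiting the fact, already established in the previous lemma, that $(\dot{\mathcal{D}}(Q),\subseteq,\bullet)$ is a monoidal qo and, when $(Q,\leq_Q)$ is a wqo, that $(\dot{\mathcal{D}}(Q),\subseteq)$ is well-founded. Since $\dot{\mathcal{I}}(Q)$ is a subclass of $\dot{\mathcal{D}}(Q)$ carrying the restricted order and operation, associativity, monotonicity and weak increasing-ness transfer automatically. The two points that actually require attention are: (i) that $\dot{\mathcal{I}}(Q)$ is closed under $\bullet$, and (ii) that it contains a neutral element.

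For (i), I would take $A, B \in \dot{\mathcal{I}}(Q)$ and show that $AB \in \dot{\mathcal{I}}(Q)$. Downward-closedness of $AB$ is immediate from its definition, so only upward-directedness requires argument. Given $z_1, z_2 \in AB$, pick $a_i \in A$ and $b_i \in B$ with $z_i \leq_Q a_i b_i$ for $i \in \{1,2\}$; by upward-directedness of $A$ and $B$, choose $a \in A$ dominating $a_1, a_2$ and $b \in B$ dominating $b_1, b_2$. Then by monotonicity of $\cdot$ one has $z_i \leq_Q a_i b_i \leq_Q ab$, and $ab \in AB$, so $ab$ bounds $z_1, z_2$ inside $AB$.

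For (ii), the neutral element $E$ of $(\dot{\mathcal{D}}(Q),\subseteq,\bullet)$ supplied by the previous lemma is itself upward-directed: any two elements $e_1, e_2 \in E$ satisfy $e_1, e_2 \leq_Q e_1 e_2$ by weak increasing-ness, while $e_1 e_2$ is again neutral (so an element of $E$) because $e_1, e_2 \leq_Q e$ together with monotonicity yield $e_1 e_2 \leq_Q e\cdot e \equiv e$. Hence $E \in \dot{\mathcal{I}}(Q)$, and it continues to act as the neutral element once $\bullet$ is restricted to ideals. For the second assertion, any strictly descending chain in $(\dot{\mathcal{I}}(Q),\subseteq)$ is a fortiori a strictly descending chain in $(\dot{\mathcal{D}}(Q),\subseteq)$, so well-foundedness is inherited directly from the preceding lemma.

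I expect no real obstacle here: the argument is a routine closure check, and the only place where ideal-specific information enters is the verification that $AB$ inherits upward-directedness, which combines the directedness of $A$ and $B$ with the monotonicity of $\cdot$. The more substantive work on ideals, namely \Cref{lem:prod-dec-dc}, is reserved for the subsequent structural results of the section.
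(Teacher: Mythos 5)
Your proposal is correct and follows essentially the same route as the paper: the paper's proof consists precisely of your step (i) — showing $AB$ is upward-directed by bounding $a_1,a_2$ in $A$ and $b_1,b_2$ in $B$ and invoking monotonicity — and dismisses the remaining clauses as inherited from $\dot{\mathcal{D}}(Q)$. Your additional verification that the neutral element is itself an ideal and that well-foundedness restricts to the subclass is routine extra detail the paper leaves implicit.
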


%We again denote $I\bullet J$ simply by $IJ$.

\begin{proof}
  We verify that if $I,J\in \dot{\mathcal{I}}(Q)$, then $IJ$ is still in $\dot{\mathcal{I}}(Q)$, all the other
  properties are verified as in the case of $\dot{\mathcal{D}}(Q)$. Let $c_0,c_1\in IJ$, then there are
  $a_0,a_1\in I$ and $b_0,b_1\in J$ such that $c_0\leq_Q a_0b_0$ and
  $c_1\leq_Qa_1b_1$. Let $a_2$ be such that $a_0,a_1\leq_Q a_2$, and $b_2$ be such that $b_0,b_1\leq_Q b_2$.
  Then $a_2b_2\in IJ$ and $c_0,c_1\leq_Q a_2b_2$.
\end{proof}

We recall some known properties of ideals in the case that $Q$ is a wqo.

\begin{Lemma}\label{lem:known-prop}
($\prsou(\mathsf{ch})$)
  Let $Q$ be a set and $(Q,\leq_Q)$ a wqo. Then
  \begin{itemize}
  \item if $D\in \dot{\mathcal{D}}(Q)$, then there are $n\in\omega$ and $I_0,\dots,I_{n-1}\in \dot{\mathcal{I}}(Q)$
    such that $D= \bigcup_{i<n} I_i$.
  \item If $J\subseteq\bigcup_{i<n}I_i$, for $J,I_0,\dots, I_{n-1}\in \dot{\mathcal{I}}(Q)$, then there
    is $k<n$ such that $J\subseteq I_k$.
    Moreover, if $J=\bigcup_{i<n}I_i$, for $J,I_0,\dots, I_{n-1}\in \dot{\mathcal{I}}(Q)$, then there
    is $k<n$ such that $J= I_k$.
  \item The product distributes over the union: for every $I,J,K,H\in \dot{\mathcal{I}}(Q)$,
    $(I\cup J)(K\cup H)= IK \cup IH \cup JK \cup JH$.
  \end{itemize}
\end{Lemma}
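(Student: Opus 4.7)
The plan is to prove the three clauses in order; the first is the substantive one. For it, I will induct on $(\dot{\mathcal{D}}(Q), \subseteq)$, which is well-founded by the preceding lemma. If $D$ happens to be upward-directed, then $D \in \dot{\mathcal{I}}(Q)$ and we are done with $n = 1$. Otherwise, pick witnesses $a, b \in D$ with no common upper bound inside $D$; note that then $a \not\leq_Q b$ and $b \not\leq_Q a$, since otherwise the larger of the two would itself serve as a common upper bound. Set
\[
D_a = \{d \in D : a \not\leq_Q d\}, \qquad D_b = \{d \in D : b \not\leq_Q d\}.
\]
Both sets are downward-closed (if $e \leq_Q d \in D_a$, then $a \leq_Q e$ would force $a \leq_Q d$), both are non-empty (by the incomparability just noted, $b \in D_a$ and $a \in D_b$), both are proper subsets of $D$ ($a \notin D_a$, $b \notin D_b$), and $D = D_a \cup D_b$, since any $d \in D$ failing to be in either would satisfy $a, b \leq_Q d$, contradicting our choice. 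Applying the inductive hypothesis to $D_a$ and $D_b$ produces finite decompositions into ideals whose union decomposes $D$.

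For the second clause, assume toward a contradiction that $J \not\subseteq I_i$ for every $i < n$, and pick $x_i \in J \setminus I_i$. Iterating the upward-directedness of $J$ yields a single $x \in J$ with $x_i \leq_Q x$ for all $i < n$. Since $x \in J \subseteq \bigcup_{i<n} I_i$, there exists $k < n$ with $x \in I_k$, and downward-closure gives $x_k \in I_k$, a contradiction. The \emph{moreover} clause follows at once: under $J = \bigcup_{i<n} I_i$, the argument just given produces some $k$ with $J \subseteq I_k$, and $I_k \subseteq \bigcup_{i<n} I_i = J$ is trivial, hence $J = I_k$.

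The third clause is a direct computation from the definition of $\bullet$. For the forward inclusion, if $z \leq_Q xy$ with $x \in I \cup J$ and $y \in K \cup H$, a case analysis on which of $I, J$ contains $x$ and which of $K, H$ contains $y$ places $z$ into one of $IK, IH, JK, JH$. The reverse inclusion follows from monotonicity of $\bullet$ with respect to inclusion, since each of the four products on the right is visibly contained in $(I \cup J)(K \cup H)$. Observe that neither the $+$-property nor upward-directedness plays any role here.

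I expect the first clause to be the main obstacle: the splitting construction itself is short, but verifying that $D_a$ and $D_b$ genuinely lie in $\dot{\mathcal{D}}(Q)$ (non-empty, downward-closed, and strictly smaller than $D$) is the delicate combinatorial step, and the well-foundedness of $(\dot{\mathcal{D}}(Q), \subseteq)$ established in the preceding lemma is precisely what allows the recursion to terminate within the weak theory $\prsou(\mathsf{ch})$.
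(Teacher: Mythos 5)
Your proof is correct and follows essentially the same route as the paper's: the same splitting of a non-directed $D$ into $D_a$ and $D_b$ with induction along the well-founded order $(\dot{\mathcal{D}}(Q),\subseteq)$, the same directedness argument for the second clause, and a direct computation for the third. The only difference is that you spell out a few details the paper leaves implicit (non-emptiness and downward-closure of $D_a,D_b$), which is fine.
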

\begin{proof}
  We prove the first claim first.
  We know that $(\dot{\mathcal{D}}(Q),\subseteq)$ is well-founded if $Q$ is a wqo. Hence, we can
  proceed by $\Delta_0$-induction on the order $\subseteq$. Suppose for a contradiction
  that $D$ is $\subseteq$-minimal such that it is not the union of finitely many
  ideals. In particular, $D$ is itself not an ideal, so there are $a,b\in D$
  such that there is no $c\in D$ with $a,b\leq_Q c$. Let $D_a=\{ d\in D: a\not\leq_Q d\}$,
  and $D_b=\{d\in D: b\not\leq_Q d\}$. Then $D\subseteq D_a\cup D_b$,
  and $D_a,D_b\subset D$. Hence, $D_a$ and $D_b$ are finite unions of ideals, and
  then so is $D$.

  Now for the second statement. Suppose for a contradiction that $J$ was
  not contained in any
  of the ideals $I_i$, then for every $i<n$ there is an element
  $j_{i}$ such that $j_i\in J\setminus I_i$. Then there is $j\in J$ above all the
  $j_i$'s, and this $j$ is in $I_i$ for some $i<n$, contradiction.
  The ``Moreover'' part is proved in the same way.

  The proof of the last point is routine. Notice that this result holds already for downward-closed sets.
\end{proof}

We are finally able to show that, if $(Q,\leq_Q,\cdot)$ has the $+$-property, then so does $(\dot{\mathcal{I}}(Q),\subseteq,\bullet)$.

\begin{Lemma}\label{lem:wqo-plus-to-id-plus}
($\prsou(\mathsf{ch})$)
  If $Q$ is a set and $(Q,\leq_Q,\cdot)$ is a monoidal wqo$^+$, then $(\dot{\mathcal{I}}(Q),\subseteq,\bullet)$
  is a monoidal wfqo$^+$.
\end{Lemma}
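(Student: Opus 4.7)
The plan is to reduce the $+$-property for ideals to the downward-closed version already established in \Cref{lem:prod-dec-dc}, combined with the splitting/absorption properties of ideals from \Cref{lem:known-prop}. By \Cref{lem:id-mult-qo}, $(\dot{\mathcal{I}}(Q),\subseteq,\bullet)$ is already known to be a monoidal wfqo, so only the $+$-property needs to be verified. That is, given $I,J,K\in\dot{\mathcal{I}}(Q)$ with $K\subseteq IJ$, I must produce $I'\subseteq I$ and $J'\subseteq J$ in $\dot{\mathcal{I}}(Q)$ with $K\equiv I'J'$, which under $\subseteq$ means $K=I'J'$.

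The first step is to forget, momentarily, that $I$, $J$, $K$ are ideals and treat them as mere downward-closed sets. \Cref{lem:prod-dec-dc} then yields finitely many downward-closed sets $A_0,\dots,A_{n-1}\subseteq I$ and $B_0,\dots,B_{n-1}\subseteq J$ such that $K=\bigcup_{i<n} A_i B_i$. Each $A_i$ and each $B_i$ is downward-closed but not yet an ideal, so I will apply the first item of \Cref{lem:known-prop} to decompose $A_i=\bigcup_{s<k_i} A_i^{(s)}$ and $B_i=\bigcup_{t<l_i} B_i^{(t)}$ into finite unions of ideals (all contained in $I$ and $J$, respectively, since downward-closed subsets of these unions are contained in the original ideals).

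The second step applies distributivity of $\bullet$ over $\cup$ (third item of \Cref{lem:known-prop}) to rewrite $A_iB_i=\bigcup_{s,t} A_i^{(s)}B_i^{(t)}$. By \Cref{lem:id-mult-qo}, each $A_i^{(s)}B_i^{(t)}$ is again an ideal. Therefore
\[
K=\bigcup_{i<n}\bigcup_{s<k_i,\,t<l_i} A_i^{(s)} B_i^{(t)},
\]
i.e.\ the ideal $K$ is expressed as a finite union of ideals. The decisive step is then the ``Moreover'' clause of the second item of \Cref{lem:known-prop}: such an equality forces $K=A_i^{(s)}B_i^{(t)}$ for some triple $(i,s,t)$. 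Setting $I':=A_i^{(s)}\subseteq I$ and $J':=B_i^{(t)}\subseteq J$ gives the desired witnesses, so the $+$-property holds in $\dot{\mathcal{I}}(Q)$.

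I do not anticipate a serious technical obstacle: every nontrivial step is supplied by a previously proved lemma, and the argument is essentially a three-line assembly: decompose into downward-closed rectangles, split the rectangles into ideal rectangles by distributivity, then invoke ideal-absorption to collapse the union to a single summand. The only subtlety to keep in mind is formal: since $(\dot{\mathcal{I}}(Q),\subseteq)$ is antisymmetric, the ambient notion of equivalence $\equiv$ coincides with equality, so ``$K\equiv I'J'$'' in the sense of the $+$-property is indeed obtained from the literal identity $K=I'J'$ produced by the absorption step.
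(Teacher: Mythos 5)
Your proof is correct and follows essentially the same route as the paper's: decompose via \Cref{lem:prod-dec-dc} into downward-closed rectangles, refine each factor into a finite union of ideals, distribute the product over the union, and invoke the absorption (``Moreover'') clause of \Cref{lem:known-prop} to collapse the union to a single ideal product. The only difference is cosmetic bookkeeping of indices.
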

\begin{proof}
  The only thing that needs to be shown is that if $J\subseteq IH$, then
  there are ideals $I'\subseteq I$ and $H'\subseteq H$ such that $J=I'H'$.
  Given $J\subseteq IH$, by \Cref{lem:prod-dec-dc} there are downward-closed
  sets $A_0,\dots,A_{n-1}$ and
  $B_0,\dots, B_{n-1}$ such that $J=\bigcup_{i<n} A_iB_i$. By \Cref{lem:known-prop},
  every $A_i$ is the union of finitely many ideals, $A_i=I_{i,0}\cup\ldots \cup I_{i,m_i}$, and the same goes for the $B_i$ being represented as the unions of ideals $B_i=H_{i,0}\cup\ldots \cup H_{i,k_i}$.
  Again by \Cref{lem:known-prop}, since the product distributes over the union,
  $J$ is a union of products of ideals \[J=\bigcup_{i<n}\bigcup_{j<m_i}\bigcup_{l<k_i} I_{i,j}H_{i,l}.\]
  Invoking \Cref{lem:known-prop} again, we conclude that for some particular $i,j,l$ we have $J=I_{i,j}H_{i,l}$.
\end{proof}

%We say that an ideal $J$ is \emph{prime} if it is a prime element of
%$(I(Q),\subseteq,\bullet)$, i.e.\ if $J= IH$, then $J=I$ or $J=K$.

We can now prove the main result of this section, namely that, speaking loosely, the ideals in $\dot{\mathcal{I}}(Q)$ that happen to be primes in the monoid $(\dot{\mathcal{I}}(Q),\bullet)$ are completely determined by the primes of $Q$, provided the latter is a monoidal wqo$^+$. To do this, we show that the elements of $\prim(\dot{\mathcal{I}}(Q))$ only come in two forms: this is a generalization of 
\cite[Theorem 4.11]{ideal-decomp-goubault-larrecq-halfon-karakndikar-narayan-schnoebelen}, where only the case where $Q$ is the Higman ordering on a wqo $P$ was considered. We remark that, as far as we can tell, our proof is rather different from the one given in \cite{ideal-decomp-goubault-larrecq-halfon-karakndikar-narayan-schnoebelen}.

For the proof, we need some additional terminology.

\begin{Definition}
($\prsou$)
  Let $Q$ be a set, let $(Q,\leq_Q,\cdot)$ be a multiplicative qo and $A\subseteq Q$. By $A^*$ we denote the
  set of finite products of $A$, namely $A^*= \{ a\in Q:
  \exists \{a_0,\dots,a_{n-1}\}\in [A]^{<\omega}
  (a\equiv a_0\cdot\dots \cdot a_{n-1})\}$.
\end{Definition}

%We now show that the prime ideals of $Q$ can only have two shapes.
Although this is not really needed in what follows, we observe that, if $Q$ is a monoidal wfqo$^+$ and $D\in \dot{\mathcal{D}}(Q)$, then $(\prim(Q)\cap D)^*$ is always an ideal. Indeed, being closed under products, $(\prim(Q)\cap D)^*$ is clearly upward-directed, so we only have to show that it is downward-closed. Consider $a\in (\prim(Q)\cap D)^*$ and $b\in Q$ with $b\leq_Q a$. If $b$ is prime, then by \Cref{lem:qo+-prod} it is below a prime factor of $a$, and so $b\in D$ since $D$ is downward-closed. Similarly, if $b$ is not prime, then every one of its prime factors is in $D$, and we conclude that $b\in (\prim(Q)\cap D)^*$ since $(\prim(Q)\cap D)^*$ is closed under finite products.

\begin{Theorem}\label{theo:id-two-forms}
($\prsou(\mathsf{ch})$)
  Let $Q$ be a set, let $(Q,\leq_Q,\cdot)$ be a monoidal wqo$^+$, and let $I\in \dot{\mathcal{I}}(Q)$ be prime. Then,
  either $I= (\prim(Q) \cap I)^*$, or $I=(\prim(Q)\cap I)\downarrow$.
\end{Theorem}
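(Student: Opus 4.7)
Write $P = \prim(Q) \cap I$. I would establish the easy containments $P\downarrow \subseteq I \subseteq P^*$, and then split on whether $I$ is idempotent under $\bullet$ to decide which of the two forms $I$ takes.

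The inclusion $P\downarrow \subseteq I$ is immediate from $P \subseteq I$ and the downward-closedness of $I$. For $I \subseteq P^*$, I would take $a \in I$ and apply \Cref{lem:fin-fact} to decompose $a \equiv p_1 \cdots p_n$ with each $p_i$ prime; iterating weak increasing-ness together with the fact that $e \leq_Q p_j$ (which holds since $e \leq_Q e\cdot p_j \equiv p_j$) yields $p_i \leq_Q a$, so $p_i \in I$ by downward closure, hence $p_i \in P$ and $a \in P^*$.

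If $I = I \bullet I$, I would conclude $I = P^*$ as follows: given any $p_1, \ldots, p_k \in P$, upward-directedness of $I$ furnishes $a \in I$ dominating all the $p_i$'s. Using $a^{n+1} \equiv a^n \cdot a$ together with $I \bullet I = I$, an induction on $n$ shows $a^n \in I$ for every $n \geq 1$. Hence $p_1 \cdots p_k \leq_Q a^k \in I$ and downward closure of $I$ gives $p_1 \cdots p_k \in I$, so $P^* \subseteq I$ and $I = P^*$.

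If instead $I \subsetneq I \bullet I$, the target is $I = P\downarrow$, and this is the main obstacle. The $+$-property of $\dot{\mathcal{I}}(Q)$ supplied by \Cref{lem:wqo-plus-to-id-plus}, applied to the inclusion $I \subseteq I \bullet I$, produces an ideal factorization $I = A \bullet B$ with $A, B \subseteq I$; primality of $I$ forces $A = I$ or $B = I$, and the case $A = B = I$ is excluded by non-idempotence, so after a symmetric WLOG we may write $I = I \bullet B$ with $B \subsetneq I$. The delicate step is to pass from such a factorization to the statement that every element of $I$ is dominated by a prime in $P$. The approach I would try is to use the $+$-property of $Q$ to decompose each $a \in I$ as $a \equiv x \cdot b$ with $x \in I$, $b \in B$, and to iterate this decomposition while exploiting well-foundedness of $\leq_Q$ on $Q$; the cleaner reformulation I would aim for along the way is that $I$ has a maximum element $m$, which is necessarily prime (for otherwise $m \equiv xy$ with $x, y <_Q m$ would yield $I = x\downarrow \bullet y\downarrow$ with both factors proper sub-ideals, contradicting primality of $I$), so that $I = m\downarrow$ and, since $m \in P$, $I = P\downarrow$.
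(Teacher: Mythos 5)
Your setup and the first two-thirds of the argument are sound: the containments $(\prim(Q)\cap I)\downarrow\subseteq I\subseteq(\prim(Q)\cap I)^*$ are correct (the second via \Cref{lem:fin-fact} plus monotonicity), and your treatment of the idempotent case is a clean, correct way to obtain $I=(\prim(Q)\cap I)^*$; note that your case split coincides with the paper's, since $I$ is idempotent exactly when $(\prim(Q)\cap I)^*\subseteq I$. The problem is the non-idempotent case, which is where all the difficulty of the theorem lives, and there your proposal has a genuine gap that cannot be repaired along the lines you indicate. The ``cleaner reformulation'' you aim for --- that a non-idempotent prime ideal $I$ has a maximum element $m$, necessarily prime, so that $I=m\downarrow$ --- is false. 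Take $Q=\omega^{<\omega}$ with the Higman ordering and concatenation (the paper's prototypical monoidal wqo$^+$), and let $I$ be the set of sequences of length at most $1$. This is downward-closed and upward-directed; it is prime (if $I=A\bullet B$ with both $A$ and $B$ containing a nonempty string, then $A\bullet B$ contains a string of length $2$, so one factor must be $\{\epsilon\}$ and the other equals $I$); it is not idempotent; it equals $(\prim(Q)\cap I)\downarrow$; and it has no maximum element. So any strategy whose endpoint is a principal ideal is doomed. The preceding iteration idea is also not workable as stated: from $I=I\bullet B$ with $B\subsetneq I$, the $+$-property gives $a\equiv x\cdot b$ with $x\in I$ and $b\in B$, but nothing forces $x<_Q a$ (one may have $b\equiv e$), so well-foundedness gives the iteration no reason to terminate.

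For contrast, the paper's proof of this case runs differently: assuming $(\prim(Q)\cap I)^*\not\subseteq I$, it picks primes $q_0,\dots,q_n\in I$ with $a=q_0\cdots q_{n-1}\in I$ but $aq_n\notin I$, and proves the inclusion $I\subseteq D_a\bullet(\prim(Q)\cap I)\downarrow\bullet D_q$, where $D_a=\{c\in I: a\not\leq_Q c\}$ and $D_q=\{c\in I: q_n\not\leq_Q c\}$ are downward-closed proper subsets of $I$; the middle factor absorbs, for each $c\in I$ written as a product of primes, the last prime factor lying above $q_n$. Writing each of the three factors as a finite union of ideals (\Cref{lem:known-prop}), distributing the product over the union, and applying primality of $I$ via \Cref{lem:qo+-prod} traps $I$ inside a single ideal factor; the factors inside $D_a$ and $D_q$ are excluded because $a,q_n\in I$, leaving $I\subseteq(\prim(Q)\cap I)\downarrow$. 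You would need an argument of this kind --- one that confines $I$ to a product in which the prime-generated piece is the only factor large enough to contain it --- rather than one that tries to exhibit a single generator.
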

\begin{proof}
  Suppose that $I$ is not of the form $(\prim(Q)\cap I)^*$. Then, there is a
  sequence $q_0,\dots q_n$ of primes in $I$ such that
  $q_0\cdot\dots\cdot q_n\not\in I$. Notice that $n\geq 1$. We can suppose that
  $n$ is such that $q_0\cdot\dots\cdot q_{n-1} \in I$, by removing tails of the
  sequence. For brevity, we denote $q_0\cdot\dots\cdot q_{n-1}$ by $a$.

  Let $D_a=\{ c\in I: a\not\leq_Q c\}$ and $D_q=\{c\in I: q_n\not\leq_Q c\}$,
  they are both downward-closed sets. We claim that
  \[
    I\subseteq D_a\bullet (\prim(Q)\cap I)\downarrow \bullet D_q.
  \]
  We prove this claim by considering any $c\in I$ and showing that $c$ is also inside the product in the right-hand side of the inclusion above. Of course, $c$ is equivalent to the product of finitely many primes,
  $c\equiv p_0\cdot\dots\cdot p_m$.
  % If $m=0$, then $c$ is prime itself, so
  % indeed $c\in D_a\bullet (I\cap Pr(Q))\downarrow \bullet D_q$. So we can suppose
  % that $m\geq 1$.
  
  Suppose that for no $i\leq m$ we have that $q_n\leq_Q p_i$. Then, by
  \Cref{lem:qo+-prod}, $q_n\not\leq_Q p_0\cdot\dots\cdot p_m$, so
  $c\in D_q$, as we wanted. So we can suppose that
  there is such $i$ with $q_n\leq_Q p_i$: let
  $j\leq m$ be the largest of these $i$'s, so that we have $c\equiv p_0\cdot
  \ldots\cdot p_{j-1} \cdot p_j \cdot p_{j+1}\cdot\ldots\cdot p_m$.

  By definition of $j$ and \Cref{lem:qo+-prod},
  $q_n\not\leq_Q p_{j+1}\cdot\ldots\cdot p_m$.
  Moreover, $p_0\cdot\ldots\cdot p_{j-1}\not\geq_Q a$, since otherwise  $aq_n\leq_Q
  p_0\cdot\ldots\cdot p_{j-1} p_j\leq_Q c$, which would entail that $aq_n\in I$, contradicting the definition of the sequence $q_0,\ldots q_n$.
  It follows that $p_0\cdot\ldots\cdot p_{j-1}\in D_a$: since we also have that $p_j\in (\prim(Q)\cap I)\downarrow$ and $p_{j+1}\cdot\ldots\cdot p_m\in D_q$, the claim is proved.

  Since we know that $D_a$, $D_q$ and $(\prim(Q)\cap I)\downarrow$ are downward-closed
  sets, we can apply \Cref{lem:known-prop} to first express each of the
  three sets above as a finite union of ideals, then to distribute the product over
  the union, and finally to see that $I$ is contained in one of the elements
  of the union. Summarizing, we have that $I\subseteq I_0I_1I_2$, where
  $I_0\subseteq D_a$, $I_1\subseteq (\prim(Q)\cap I)\downarrow$ and
  $I_2\subseteq D_q$ are ideals. Since $I$ is prime, by \Cref{lem:qo+-prod}
  $I\subseteq I_0$ or $I\subseteq I_1I_2$. Since $a\in I$, it cannot be that
  $I\subseteq I_0$. Applying the same argument to $I_1I_2$, we conclude that
  $I\subseteq (\prim(Q)\cap I)\downarrow$. On the other hand, it is always the
  case that $(\prim(Q)\cap I)\downarrow\subseteq I$, so we can conclude that
  $I=(\prim(Q)\cap I)\downarrow$, as we wanted. 
\end{proof}

\section{$\dot V^*(Q)$ and $\dot I^*(Q)$ as multiplicative qos}\label{sec:vq-iq-mult-qo}

In this section, we assume that the language $L$ of the structure of urelements is given by a binary relation $\leq_Q$ and a binary function symbol $\cdot$, and we assume that the structure $(Q,\leq_Q,\cdot)$ of urelements is a monoidal qo. With this assumption, we show how to extend the $\cdot$ operation to $\dot V^*(Q)$ and $\dot I^*(Q)$ in such a way that $(\dot V^*(Q), \lesssim_Q^*,\cdot)$ and $(\dot I^*(Q),\lesssim_Q^*,\cdot)$ are multiplicative qos.

\begin{Definition}
    ($\prsou$)
    %Let $(Q,\leq_Q,\cdot)$ be a monoidal qo. 
    We extend the operation $\cdot$ to $\dot V^*(Q)$ by recursion as follows:
    \[
        x\cdot y =
        \begin{cases}
            x\cdot y &\text{ if } x,y\in Q \\
            \{x\}\cdot y &\text{ if } x\in Q,y\not\in Q \\
            x\cdot \{ y \} &\text{ if } x\not\in Q, y\in Q \\
            \{ x'\cdot y': x'\in x, y'\in y\} &\text{ if }x,y\not\in Q
        \end{cases}
    \]
    We will abuse notation and denote $x\cdot y$ by $xy$.
\end{Definition}

We just have to verify that associativity, monotonicity and weak increasing-ness hold. We do it for $\dot V^*(Q)$ in the following three Lemmas.

\begin{Lemma}
    ($\prsou$)
    For all $x_0,x_1,x_2\in \dot V^*(Q)$, $(x_0x_1)x_2\sim^*_Q x_0(x_1x_2)$.
\end{Lemma}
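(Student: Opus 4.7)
The plan is to proceed by well-founded induction on the triple $(x_0, x_1, x_2)$, using as induction measure the multiset of those $\rank(x_i)$ for which $x_i \notin Q$, ordered by the standard multiset well-order on ordinals. This measure is empty precisely when all three arguments lie in $Q$, and it strictly decreases whenever we replace some $x_i \notin Q$ by a member $x'_i \in x_i$.

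In the base case $x_0, x_1, x_2 \in Q$, both sides are computed by the first clause of the definition of $\cdot$. Associativity up to $\equiv_Q$ in the monoidal qo $(Q, \leq_Q, \cdot)$ gives $(x_0 x_1) x_2 \equiv_Q x_0 (x_1 x_2)$, and since $\lesssim^*_Q$ restricted to $Q$ coincides with $\leq_Q$, this is precisely $(x_0 x_1) x_2 \sim^*_Q x_0 (x_1 x_2)$.

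For the inductive step, set $\tilde{x}_i := x_i$ if $x_i \notin Q$ and $\tilde{x}_i := \{x_i\}$ otherwise. A routine case analysis on the eight possibilities for which of $x_0, x_1, x_2$ lie in $Q$ shows that, whenever at least one $x_i \notin Q$, the recursive clauses of $\cdot$ unfold to yield
\[
(x_0 x_1) x_2 = \{(x'_0 x'_1) x'_2 : x'_0 \in \tilde{x}_0,\; x'_1 \in \tilde{x}_1,\; x'_2 \in \tilde{x}_2\},
\]
and symmetrically
\[
x_0 (x_1 x_2) = \{x'_0 (x'_1 x'_2) : x'_0 \in \tilde{x}_0,\; x'_1 \in \tilde{x}_1,\; x'_2 \in \tilde{x}_2\}.
\]
For each triple $(x'_0, x'_1, x'_2)$ on the right, the induction measure is strictly smaller than that of $(x_0, x_1, x_2)$, so the inductive hypothesis gives $(x'_0 x'_1) x'_2 \sim^*_Q x'_0 (x'_1 x'_2)$ pointwise. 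Since both sides are indexed by the common set $\tilde{x}_0 \times \tilde{x}_1 \times \tilde{x}_2$, the forall-exists clause of $\lesssim^*_Q$ on non-urelement sets lifts this to $(x_0 x_1) x_2 \sim^*_Q x_0 (x_1 x_2)$.

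The main obstacle is the bookkeeping for the case analysis: each of the eight subcases requires tracing how the singleton wrappings $\{\cdot\}$ interact with the various recursive clauses of $\cdot$, and in particular checking that the internally computed values of $x_0 x_1$ and $x_1 x_2$ have the right shape (a set whose elements are products of an $x'_i \in \tilde{x}_i$ with an $x'_j \in \tilde{x}_j$) to feed into the next application of $\cdot$. Once the uniform description in terms of $\tilde{x}_i$ above is verified, the induction runs without friction.
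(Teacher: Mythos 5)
Your proof is correct: the uniform description of both triple products as sets indexed by $\tilde{x}_0\times\tilde{x}_1\times\tilde{x}_2$ is exactly right, and the induction goes through. The paper's own proof is just the remark that this is a straightforward verification, so your argument supplies precisely the details it omits; the only cosmetic point is that the multiset measure can be replaced by the simpler $\max\{\rank x_0,\rank x_1,\rank x_2\}$ (with urelements assigned rank $-1$), which is the measure used for the neighbouring monotonicity and weak increasing-ness lemmas and is more transparently available in $\prsou$.
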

\begin{proof}
    This is a straightforward verification. %\todo{add something}
\end{proof}

\begin{Lemma}
    ($\prsou$)
    For every $x,y\in \dot V^*(Q)$, $x\lesssim_Q^* xy$.
\end{Lemma}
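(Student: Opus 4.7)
The plan is to proceed by transfinite induction on a well-founded measure of the pair $(x,y)$---for concreteness, on the ordinal $\rank(x)+\rank(y)$, extended to urelements by setting the rank to $0$, or equivalently by a double $\in$-induction on $\dot V^*(Q)$. I split into four cases depending on whether $x$ and $y$ lie in $Q$, mirroring the four clauses of the definition of $\cdot$.

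In the base case $x,y\in Q$, the product $xy$ lies in $Q$, so $x\lesssim_Q^* xy$ is literally $x\leq_Q xy$, which is the weak increasingness assumption on the monoidal qo $(Q,\leq_Q,\cdot)$. In each of the remaining three cases the product $xy$ is a set not in $Q$. Unfolding the definition of $\lesssim_Q^*$ at $(x,xy)$ reduces the goal to showing that, for every $x'$ (either ranging over the elements of $x$ if $x\notin Q$, or taken to be $x$ itself passing through the singleton clause if $x\in Q$), there is some $w\in xy$ with $x'\lesssim_Q^* w$.

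Unfolding the definition of $\cdot$ on sets, the natural witness is $w = x'\cdot y'$ for a choice of $y'\in y$ (or $y'=y$ if $y\in Q$). The required inequality $x'\lesssim_Q^* x'y'$ is then precisely the claim of the lemma for the pair $(x',y')$, which has strictly smaller rank measure than $(x,y)$ because we have replaced at least one entry by one of its elements. Thus the inductive hypothesis applies and the case is closed. The only auxiliary observation needed is that when $y\notin Q$, some element $y'\in y$ is available as a witness: this is guaranteed by the definition of $\dot V(Q)$, since every element of $\dot V(Q)\setminus Q$ hereditarily contains an urelement and is in particular non-empty.

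There is no real obstacle here: the entire argument is a case split matching the defining clauses of $\cdot$ and $\lesssim_Q^*$, with the monoidal qo axiom of weak increasingness supplying exactly the base case. The subsequent monotonicity lemma for $\dot V^*(Q)$ should be handled by an analogous induction on ranks, invoking monotonicity of $\cdot$ on $Q$ in place of weak increasingness in the base case.
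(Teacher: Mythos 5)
Your proof is correct and follows essentially the same route as the paper's: induction on a rank measure of the pair (the paper uses $\max\{\rank x,\rank y\}$ rather than the sum, which makes no difference), a four-way case split matching the clauses of $\cdot$, weak increasingness of $Q$ in the base case, and the witness $x'y'\in xy$ with $x'\lesssim_Q^* x'y'$ supplied by the inductive hypothesis in the set cases. Your explicit remark that $y\notin Q$ is non-empty (since it hereditarily contains an urelement) is a detail the paper leaves implicit.
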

\begin{proof}
    We proceed by induction on $\max\{\rank x,\rank y\}$.
    \begin{itemize}
        \item If $x,y\in Q$, then the claim follows from $Q$ being a monoidal qo.
        \item If $x\in Q$, $y\not\in Q$, $xy=\{xy':y'\in y\}$, and $x\lesssim_Q^* xy'$ by inductive assumption.
        \item If $x\not\in Q$, $y\in Q$, $xy=\{x'y:x'\in x\}$, and from $x'\lesssim_Q^* x'y$ we conclude $x\lesssim_Q^* xy$.
        \item If $x,y\not\in Q$, $xy=\{x'y':x'\in x,y'\in y\}$. Fixing $y''\in y$, we have $x'\lesssim_Q^* x'y''$ for every $x'\in x$ (by inductive assumption), and so $x\lesssim_Q^* x\cdot \{y''\}\lesssim_Q^* xy$. 
    \end{itemize}
\end{proof}

\begin{Lemma}
    ($\prsou$)
    If $x_0\lesssim^*_Q x_1$ and $y_0\lesssim^*_Q y_1$, then $x_0y_0\lesssim^*_Q x_1y_1$.
\end{Lemma}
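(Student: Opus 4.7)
The plan is to proceed by induction on $\max\{\rho(x_0), \rho(x_1), \rho(y_0), \rho(y_1)\}$, where I set $\rho(q) = 0$ for urelements $q\in Q$ and $\rho(z) = \rank(z) + 1$ for $z\in\dot V^*(Q)\setminus Q$. The base case, where all four elements are in $Q$, reduces immediately to the monotonicity of $\cdot$ on $Q$, which holds by assumption that $(Q, \leq_Q, \cdot)$ is a monoidal qo.

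For the inductive step, the cleanest strategy is first to reduce to the situation where none of $x_0, x_1, y_0, y_1$ is an urelement. This reduction rests on two easy observations, both immediate from the definitions of $\lesssim^*_Q$ and $\cdot$: first, for every $q\in Q$ one has $q\sim^*_Q \{q\}$; second, for any $q\in Q$ and $z\in\dot V^*(Q)$, the product $q\cdot z$ either equals $\{q\}\cdot z$ (when $z\notin Q$) or is $\sim^*_Q$-equivalent to $\{q\}\cdot z = \{q\cdot z\}$ (when $z\in Q$). Hence replacing any urelement occurring among $x_0, x_1, y_0, y_1$ by its singleton preserves both the hypothesis $x_0\lesssim^*_Q x_1$, $y_0\lesssim^*_Q y_1$ and the conclusion $x_0 y_0\lesssim^*_Q x_1 y_1$ up to $\sim^*_Q$.

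Once we assume that all four elements are non-urelements, the definition of $\cdot$ gives $x_i y_i = \{x_i' y_i' : x_i'\in x_i,\ y_i'\in y_i\}$, and $x_0\lesssim^*_Q x_1$ unfolds to the assertion that every $x_0'\in x_0$ has some $x_1'\in x_1$ with $x_0'\lesssim^*_Q x_1'$, with an analogous unfolding for the $y$'s. To verify $x_0 y_0\lesssim^*_Q x_1 y_1$, take an arbitrary element $x_0' y_0'\in x_0 y_0$, pick a witnessing $x_1'\in x_1$ with $x_0'\lesssim^*_Q x_1'$ and a witnessing $y_1'\in y_1$ with $y_0'\lesssim^*_Q y_1'$, and then invoke the inductive hypothesis on the tuple $(x_0', x_1', y_0', y_1')$, all of whose $\rho$-values are strictly smaller than the maximum over which we are inducting. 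This yields $x_0' y_0'\lesssim^*_Q x_1' y_1'$, so $x_1' y_1'\in x_1 y_1$ is the required witness.

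The main obstacle is purely clerical: the preliminary reduction to the all-sets situation involves careful bookkeeping over which of the four elements is an urelement, since both $\cdot$ and $\lesssim^*_Q$ branch on this, producing a sizable but routine case split. No new mathematical content is involved beyond the base case and the single inductive step described above, and the argument is naturally formalizable in $\prsou$ since the induction is on a well-founded rank and the case distinctions involved are $\Delta_0$.
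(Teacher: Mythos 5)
Your proof is correct and follows essentially the same route as the paper's: induction on the maximum rank of the four arguments, with the inductive step verifying the defining clause of $\lesssim^*_Q$ elementwise on the product sets. The only difference is organizational — you dispatch the urelement cases up front via the normalization $q\mapsto\{q\}$ (using $q\sim^*_Q\{q\}$ and compatibility of $\cdot$ with this replacement), whereas the paper runs through the corresponding case split explicitly; both are fine.
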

\begin{proof}
    We prove the claim by induction on $\alpha=\max\{ \rank x_0, \rank x_1,\rank y_0,\rank y_1\}$, where we use the convention that the rank of urelements is $-1$. If $\alpha=-1$, then the claim follows from the fact that $Q$ is a monoidal qo.

    Suppose then that $\alpha>-1$, and assume that the maximum is achieved by $\rank x_0$, the other cases being similar. 

    We have to consider several cases. First, suppose that $y_0\in Q$, so $x_0y_0=\{x_0'y_0:x_0'\in x_0\}$.
    \begin{itemize}
        \item If $x_1,y_1\in Q$, then $x_0\lesssim_Q^* x_1$ implies $x_0'\lesssim_Q^* x_1$ for all $x_0'\in x_0$, hence by induction we have $x_0'y_0\lesssim_Q^* x_1y_1$, and so $x_0y_0\lesssim_Q^* x_1y_1$.
        \item If $x_1\not\in Q$ and $y_1\in Q$, then $x_1y_1=\{x_1'y_1:x_1'\in x_1\}$, and it is easy to conclude using the inductive hypothesis.
        \item If $x_1\in Q$ and $y_1\not\in Q$, then $x_1y_1=\{x_1y_1':y_1'\in y_1\}$. By our assumptions it follows that $x_0'\lesssim_Q^* x_1$ for all $x_0'\in x_0$, and $y_0\lesssim_Q^* y_1''$ for some $y_1''\in y_1$, hence $x_0y_0\lesssim_Q^* \{ x_1y_1'' \}$ follows by inductive assumption.
        \item If $x_1,y_1\not\in Q$, then $x_1y_1=\{ x_1'y_1':x_1'\in x_1,y_1'\in y_1\}$. We know that for every $x_0'\in x_0$ there is $x_1''\in x_1$ such that $x_0'\lesssim_Q^* x_1''$, and there exists $y''_1\in y_1$ with $y_0\lesssim_Q^* y_1''$. By inductive assumption, it follows that $x_0y_0\lesssim_Q^*\{x_1'y_1'':x_1'\in x_1\}$, and the claim follows.
    \end{itemize}
    The case of $y_0\not\in Q$ is similar.
\end{proof}

Finally, it is easy to see that $(\dot I^*(Q),\lesssim_Q^*,\cdot)$ is a multiplicative qo as well.

\begin{Lemma}
    ($\prsou$)
    For every $\alpha$ and $x,y\in \dot I^*_\alpha(Q)$, we have that $xy\in \dot I^*_\alpha(Q)$.
\end{Lemma}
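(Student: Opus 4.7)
The plan is to prove the claim by transfinite induction on $\alpha$, mimicking the definitional shape of $\dot I^*_\alpha(Q)$: for each $(x,y)$, I will verify simultaneously that every member of $xy$ lies in some $\dot I^*_\beta(Q)$ with $\beta<\alpha$ and that $xy$ is upward-directed with respect to $\lesssim^*_Q$. The base case $\alpha=0$ is immediate, since $\dot V^*_0(Q)=\emptyset$ and hence $\dot I^*_0(Q)=Q$, so $x,y\in Q$ and $xy\in Q\subseteq\dot I^*_0(Q)$ by monoidality of $(Q,\leq_Q,\cdot)$.

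For the inductive step, assume the statement for every $\beta<\alpha$, and split into the four cases dictated by the recursive definition of $\cdot$ on $\dot V^*(Q)$. If $x,y\in Q$, then $xy\in Q$ and we are done. If $x\in Q$ and $y\notin Q$, then $xy=\{xy':y'\in y\}$; for each $y'\in y$ the directedness clause of $\dot I^*_\alpha$ gives $\beta<\alpha$ with $y'\in\dot I^*_\beta(Q)$, and since $x\in Q\subseteq\dot I^*_\beta(Q)$ the inductive hypothesis gives $xy'\in\dot I^*_\beta(Q)$. To check that $xy$ is upward-directed, pick $xy'_0, xy'_1 \in xy$, use directedness of $y$ to find $y'_2\in y$ with $y'_0,y'_1\lesssim^*_Q y'_2$, and apply the monotonicity of $\cdot$ established in the lemmas above to conclude $xy'_0,xy'_1\lesssim^*_Q xy'_2\in xy$. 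The case $x\notin Q$, $y\in Q$ is symmetric, and the case $x,y\notin Q$ proceeds analogously with $xy=\{x'y':x'\in x,y'\in y\}$: for a typical element $x'y'$, one picks $\beta_0,\beta_1<\alpha$ with $x'\in\dot I^*_{\beta_0}(Q)$ and $y'\in\dot I^*_{\beta_1}(Q)$, sets $\beta=\max(\beta_0,\beta_1)<\alpha$ so that both lie in $\dot I^*_\beta(Q)$ by the easy monotonicity of $\dot I^*_{(-)}(Q)$ in its index, and applies the inductive hypothesis. Directedness of $xy$ follows by combining the directedness witnesses of $x$ and $y$ and invoking monotonicity of $\cdot$ once more.

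The only genuinely non-cosmetic issue is verifying the rank bound $xy\in\dot V^*_\alpha(Q)$, which is needed before even entering the $\dot I^*_\alpha$ clause; this is really a side fact, established by a straightforward subsidiary induction showing $\rank(xy)\leq\max(\rank x,\rank y)$ (taking the rank of urelements to be $-1$), which in turn follows by unpacking the four-case definition of $\cdot$ exactly as in the preceding lemmas. This is the part that requires a little ordinal bookkeeping, but no new idea beyond the techniques already deployed in the proofs of associativity, weak increasing-ness, and monotonicity, so it presents no real obstacle; the rest of the argument is an essentially mechanical unfolding of the definitions.
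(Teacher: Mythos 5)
Your proof is correct and follows essentially the same route as the paper's: both arguments verify the two defining clauses of $\dot I^*_\alpha(Q)$ (hereditary membership in lower levels and upward-directedness via the directedness witnesses of $x$ and $y$ together with monotonicity of $\cdot$), the only cosmetic difference being that you induct on $\alpha$ with a four-way case split while the paper inducts on $\max\{\rank x,\rank y\}$. Your explicit check of the rank bound $\rank(xy)\leq\max(\rank x,\rank y)$ is a detail the paper leaves implicit, and it is handled correctly.
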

\begin{proof}
    We prove the claim by induction on $\max\{\rank x,\rank y\}$. It is immediate to see that if $x'\in x$ and $y'\in y$, then $x'y'\in \dot I^*_\beta(Q)$ for some $\beta<\alpha$. To see that $xy$ is upward-directed, notice that for every $x'y'$ and $x''y''$ in $xy$, we can find $x'''y'''$ such that $x',x''\lesssim^*_Q x'''$ and $y',y''\lesssim^*_Q y'''$, thus getting that $x'y',x''y''\lesssim^*_Q x'''y'''$.
\end{proof}

As we will see, in the next section we will need to work with sets, rather than just classes. We then proceed to ``approximate'' the classes $\dot I^*_\alpha(Q)$ by modifying the way they are generated. We remark that what we do here is essentially analogous to the content of \cite[Section 4.2]{nwfr-pakhomov-solda}.

\begin{Definition}
    ($\prsou$) Suppose that $Q$ is a set. We define the primitive set-recursive 
    function $\dot{\mathcal{I}}^f$ on qos $(S,\leq_S)$ as follows: 
    \[
        \dot{\mathcal{I}}^f(S,\leq_S)= \{ T\subseteq S: \exists A\in [S]^{<\omega} (T=S\setminus \bigcup_{a\in A} a\uparrow_{\leq_S}) \text{ and $T$ is $\leq_S$-upward-directed}\}\setminus \{\emptyset\}
    \]
    We define the sets $\hat{I}^*_\alpha (Q)$ by primitive set-recursion by putting 
     \[
        \hat{I}^*_\alpha (Q) = Q \cup 
        \bigcup_{\beta<\alpha} 
        \dot{\mathcal{I}}^f(\hat{I}^*_\beta (Q),\lesssim^*_Q).
     \]
     We denote by $\hat{I}^*(Q)$ the class constituted by the union of the $\hat{I}^*_\alpha(Q)$'s, ordered by $\lesssim^*_Q$.
\end{Definition}

Notice that, as long as $Q$ is a set, all the $\hat{I}^*_\alpha(Q)$ are sets as well.

We then proceed to extend the operations of multiplication and finding primes to $\hat{I}^*_\alpha(Q)$. Due to the restricted form the elements of $\hat{I}^*_\alpha(Q)$ can have, the multiplication will not be total in this setting. 

\begin{Definition}
    ($\prsou$) Let $Q$ be a set and $(Q,\leq_Q,\cdot)$ be a monoidal qo and $\alpha$ an ordinal. For every $x,y\in \hat{I}^*_\alpha(Q)$, we set
    \[
        x\, \hat{\cdot} \, y =
        \begin{cases}
            z & \text{ if there exists $z\in \hat{I}^*_\alpha(Q)$ with $z\sim^*_Q x\cdot y$ and} \\
            & \text{ for no $\alpha'<\alpha$ there exists $z'\in \hat{I}^*_{\alpha'}(Q)$ with $z'\sim^*_Q x\cdot y$}\\
            \emptyset & \text{ otherwise}            
        \end{cases}
    \]
    Given a subset $S$ of $\hat{I}^*_\alpha(Q)$, we denote by $S^*$ the set of finite $\hat{\cdot}\,$-products of elements of $S$.
\end{Definition}

    %We then define the map $\widehat{\prim}$ on subset $S$ of $\hat I^*_\alpha(Q)$ as
\begin{Remark}
    Further in the paper we consider $\hat \cdot$ to be the ``default'' multiplication operation on $\hat{I}^*_\alpha(Q)$ and we identify $\hat{I}^*_\alpha(Q)$ with the structure $(\hat{I}^*_\alpha(Q),\lesssim^*_Q,\hat\cdot)$. 
    
    In general we cannot guarantee that the $(\hat{I}^*_\alpha(Q),\lesssim^*_Q,\hat\cdot)$ will be monoidal qos. However, as we will see later, under suitable $\alpha$-wqo-ness assumptions they will be monoidal qos. 
    
    For technical reasons it will sometimes be convenient to formally consider sets $\prim(\hat{I}^*_\alpha(Q))$ even when $\hat{I}^*_\alpha(Q)$ are not monoidal qos. For this we formally drop the requirements on the structure to be a monoidal qo in the definition of the set of primes when talking about $\hat{I}^*_\alpha(Q)$. In other words, for every ordinal $\alpha$, we define
    \[
        \prim(\hat{I}^*_\alpha(Q)) = \{ x\in \hat{I}^*_\alpha(Q): \exists y\in \hat{I}^*_\alpha(Q)( x\hat \cdot y \nsim_Q^* y)\land \forall y,z\in \hat{I}^*_\alpha(Q)(x\sim^*_Q y\, \hat{\cdot} \, z \rightarrow x\sim^*_Q y \text{ or } x\sim^*_Q z)\}.
    \]     
\end{Remark}
%\todo{just extend the previous version and extend it to this case. Remove the hats afterwards}
Again, notice that, with the assumption that $Q$ is a set, $\hat{\cdot}$ is a primitive set-recursive function, and that ${\prim}(\hat{I}^*_\alpha(Q))$ is a set for every ordinal $\alpha$.

\begin{Lemma}\label{lem:wqo-set}
    ($\prsou(\mathsf{ch})$) Let $(Q,\leq_Q)$ be a qo with $Q$ a set and $\alpha$ an ordinal. If for every $\beta<\alpha$ $\hat{I}^*_\beta(Q)$ is a wqo, then for every $x\in \dot I^*_\alpha(Q)$ there exists $y\in \hat{I}^*_\alpha(Q)$ such that $y\sim_Q x$.

    In particular, if $\hat{I}^*_\alpha(Q)$ is a wqo, then so is $\dot I^*_\alpha(Q)$.
    %\todo{note: we cannot conclude that hat is wf if the ordinal is limit, it is the "usual" problem of the star setting from the last paper}

    Moreover, supposing additionally that $(Q,\leq_Q,\cdot)$ is a monoidal qo, we have that for every $x,y\in \hat{I}^*_\alpha(Q)$, $xy\sim^*_Q  x\, \hat{\cdot} \, y \in \hat{I}^*_\alpha(Q)$, and so $(\hat{I}^*_\alpha(Q),\lesssim^*_Q,\hat{\cdot})$ is a monoidal qo. %Finally, $x\in \widehat{\prim}(\hat{I}^*_\alpha(Q))$ if and only if $x\in \prim(\hat{I}^*_\alpha(Q))$.
\end{Lemma}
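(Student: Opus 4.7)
The lemma bundles three assertions: the representation claim that every $x \in \dot I^*_\alpha(Q)$ is $\sim^*_Q$-equivalent to some $y \in \hat{I}^*_\alpha(Q)$; the corollary on wqo-preservation; and the compatibility of $\hat{\cdot}$ with the ambient product modulo equivalence. My plan is to prove the first claim by transfinite induction on $\rank(x)$, exploiting the assumed wqo-ness of the $\hat{I}^*_\beta(Q)$ to represent a suitable downward-closed set as the complement of finitely many upward cones, and then to derive the other two claims as short consequences.

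For the induction, if $x \in Q$ there is nothing to do since $Q \subseteq \hat{I}^*_\alpha(Q)$. Otherwise set $\gamma := \rank(x) < \alpha$; every $x' \in x$ has $\rank(x') < \gamma$, so the induction hypothesis furnishes a companion $y_{x'} \in \hat{I}^*_{\rank(x')+1}(Q) \subseteq \hat{I}^*_\gamma(Q)$ with $y_{x'} \sim^*_Q x'$. I then define
\[
T := \{ z \in \hat{I}^*_\gamma(Q) : \exists x' \in x \, (z \lesssim^*_Q x') \}.
\]
By construction $T$ is non-empty and downward-closed in $\hat{I}^*_\gamma(Q)$; upward-directedness follows from upward-directedness of $x$ combined with the induction hypothesis applied to an upper bound of any pair of witnessing members of $x$. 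Since $\hat{I}^*_\gamma(Q)$ is a wqo, its upward-closed subset $\hat{I}^*_\gamma(Q) \setminus T$ is a finite union of principal upward closures (this is the argument using $\mathsf{ch}$, exactly as in the preceding lemma showing $\dot{\mathcal{D}}(Q)$ is a set). Hence $T \in \dot{\mathcal{I}}^f(\hat{I}^*_\gamma(Q)) \subseteq \hat{I}^*_\alpha(Q)$, and $T \sim^*_Q x$ is routine: the inequality $T \lesssim^*_Q x$ is the defining property of $T$, while $x \lesssim^*_Q T$ uses that $y_{x'} \in T$ with $y_{x'} \sim^*_Q x'$.

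For the wqo corollary, the inclusion $\hat{I}^*_\beta(Q) \subseteq \hat{I}^*_\alpha(Q)$ for $\beta \leq \alpha$ (immediate from the definition) ensures that wqo-ness of $\hat{I}^*_\alpha(Q)$ propagates to all earlier levels, so the hypothesis of the first claim is met and any bad sequence in $\dot I^*_\alpha(Q)$ transfers to a bad sequence in $\hat{I}^*_\alpha(Q)$. For the last claim, a parallel induction shows $\hat{I}^*_\alpha(Q) \subseteq \dot I^*_\alpha(Q)$, and the closure of $\dot I^*_\alpha(Q)$ under the extended product (established in the lemma immediately preceding the definition of $\hat I^*$) gives $x\cdot y \in \dot I^*_\alpha(Q)$ for every $x,y \in \hat{I}^*_\alpha(Q)$. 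The first claim then supplies some $z \in \hat{I}^*_\alpha(Q)$ equivalent to $x\cdot y$, so the ``otherwise'' clause in the definition of $\hat{\cdot}$ never fires and $x \hat{\cdot} y \sim^*_Q x\cdot y$. The monoidal qo axioms now transfer from the already-proven axioms for $(\dot V^*(Q),\lesssim^*_Q,\cdot)$ through this equivalence, with the neutral element of $Q$ remaining neutral in $\hat{I}^*_\alpha(Q)$.

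The most delicate point is the bookkeeping of ordinal indices in the inductive step: I need the equivalents $y_{x'}$ for all $x' \in x$ to sit inside one common $\hat{I}^*_\gamma(Q)$ with $\gamma < \alpha$, so that $T$ legitimately belongs to $\dot{\mathcal{I}}^f(\hat{I}^*_\gamma(Q))$. Choosing $\gamma = \rank(x)$ is what makes this succeed, since $\rank(x') < \rank(x)$ for every $x' \in x$ and the hierarchy $\hat{I}^*$ is monotone in the index.
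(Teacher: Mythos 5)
Your proof is correct and follows essentially the same route as the paper's: the same set $T=\{z\in\hat{I}^*_\gamma(Q):\exists x'\in x\,(z\lesssim^*_Q x')\}$ is constructed by induction, shown equivalent to $x$ via the inductive hypothesis and shown to lie in $\dot{\mathcal{I}}^f(\hat{I}^*_\gamma(Q))$ using $\mathsf{ch}$ and wqo-ness, with the wqo transfer and the totality of $\hat{\cdot}$ derived exactly as in the paper. Your extra bookkeeping (the inclusion $\hat{I}^*_\alpha(Q)\subseteq\dot I^*_\alpha(Q)$ and the choice $\gamma=\rank(x)$) only makes explicit what the paper leaves implicit.
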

\begin{proof}
    We point out that this result is analogous to \cite[Lemma 4.11]{nwfr-pakhomov-solda}. 
    
    We prove the claim by $\in$-induction. If $x\in Q$, then the claim holds by considering $x=y$. So suppose that $x\in \dot I^*_\alpha(Q)\setminus Q$. Suppose that $x\in \dot I^*_{\gamma+1}(Q)\setminus \dot I^*_\gamma(Q)$.  We can then consider the set
    \[
        y=\{ y'\in \hat{I}^*_\gamma(Q): \exists x'\in x (y'\lesssim^*_Q x')\}.
    \]
    First, we claim that $y\sim^*_Q x$. By definition, we have that $y\lesssim^*_Q x$, so we only need to show $x\lesssim^*_Q y$, i.e.\ that $\forall x'\in x\exists y'\in y (x'\lesssim^*_Q y')$. By inductive assumption, for every $x'\in \dot I^*_\gamma(Q)$ there is $y'\in \hat{I}^*_\gamma(Q)$ with $y'\sim^*_Q x'$: since every one of such $y'$ is an element of $y$, the claim follows.

    Next, we claim that $y\in \hat{I}^*_\alpha(Q)$. Since $x\sim_Q^* y$, the fact that $y$ is upward-directed follows from the fact that $x$ is. Moreover, by definition, $y$ is a downward-closed subset of $\hat{I}^*_\gamma(Q)$ and since we are assuming that $\hat{I}^*_\gamma(Q)$ is a wqo, using the function $\mathsf{ch}$ we can find finitely many generators of the complement of $\hat{I}^*_\gamma(Q)$. Thus $y\in \hat{I}^*_\gamma(Q)$.

    For the ``moreover'' part, it suffices to notice that, by what we said above, there is in $\hat{I}^*_\alpha(Q)$ a $z$ equivalent to $xy$, so the operation $\hat{\cdot}$ is total with our assumptions: the verification that it enjoys the properties of a monoidal qo follows then from the same result for $\cdot$. %The consideration in the two prime functions follows easily from the previous results. %\todo{explain more?}
\end{proof}

We will need another result about the sets $\hat{I}^*_\alpha(Q)$, which gives the connection with what we did in \Cref{sec:id-combinatorics}.

\begin{Lemma}\label{lem:next-lev-mult-qo}
    ($\prsou(\mathsf{ch})$) Let $(Q,\leq_Q,\cdot)$ be a monoidal qo such that $Q$ is a set, and let $\alpha$ be an ordinal. Suppose moreover that $\hat{I}^*_\beta(Q)$ is a wqo for every $\beta\leq\alpha$. 
    Then $(\dot{\mathcal{I}}(\hat{I}^*_\alpha(Q)),\subseteq)$ is the restriction of $(\hat{I}^*_{\alpha+1}(Q),\lesssim^*_Q)$ to $\dot{\mathcal{I}}(\hat{I}^*_\alpha(Q))$ and this restriction contains exactly one representative for each $\sim_Q^*$ equivalence class.
    
    %Then each $\sim^*_Q$ equivalence class in $\hat{I}^*_{\alpha+1}(Q)$ has exactly one representative from $\dot{ \mathcal{I}}(\hat{I}^*_\alpha(Q))$ and $\lesssim_Q^*$ relation restricted to $\dot{ \mathcal{I}}(\hat{I}^*_\alpha(Q))\subseteq \hat{I}^*_{\alpha+1}(Q)$ is exactly $\subseteq$.
    
    % Then, the identity map is an order isomorphism $i_{\alpha+1}: (\dot{ \mathcal{I}}(\hat{I}^*_\alpha(Q)),\subseteq) \to (\hat{I}^*_{\alpha+1}(Q)/\sim^*_Q,\lesssim^*_Q)$.

    Moreover, under the previous assumptions, 
    %$x \, \hat{\cdot} \, y \sim^*_Q x \cdot y$ for every $x,y\in \hat{I}_{\alpha}(Q)$, and 
    for every $x,y\in \dot{\mathcal{I}}(\hat{I}^*_\alpha(Q))$, $x \, \hat{\cdot} \, y \sim^*_Q x\bullet y \sim^*_Q x\cdot y$. 
    %Finally, $\prim(x)\sim^*_Q \widehat{\prim}(x)$.
\end{Lemma}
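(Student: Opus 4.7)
The plan is to establish three things in succession: first, the set-theoretic containment $\dot{\mathcal{I}}(\hat{I}^*_\alpha(Q))\subseteq\hat{I}^*_{\alpha+1}(Q)$ (so that the restriction in the statement is well-defined); second, that the orders $\lesssim_Q^*$ and $\subseteq$ agree on this subset, with the uniqueness-of-representatives clause as an immediate corollary; and third, the three-way $\sim_Q^*$ equivalence of products.

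For the containment, I would unfold the definition of $\hat{I}^*_{\alpha+1}(Q)$ as $Q\cup\bigcup_{\beta\leq\alpha}\dot{\mathcal{I}}^f(\hat{I}^*_\beta(Q),\lesssim_Q^*)$ and use that $\hat{I}^*_\alpha(Q)$ is a wqo by hypothesis. By the standard characterization already exploited in the proof that $\dot{\mathcal{D}}(Q)$ is a set, every downward-closed subset of a wqo is the complement of finitely many principal upsets; hence any $I\in\dot{\mathcal{I}}(\hat{I}^*_\alpha(Q))$ matches the form required in $\dot{\mathcal{I}}^f(\hat{I}^*_\alpha(Q),\lesssim_Q^*)$, and therefore belongs to $\hat{I}^*_{\alpha+1}(Q)$.

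For the order agreement, both directions are routine: $I\subseteq J$ trivially implies $I\lesssim_Q^* J$, and conversely if $I\lesssim_Q^* J$, then any $x\in I$ is $\lesssim_Q^*$-below some $y\in J$, whence $x\in J$ by downward-closedness of $J$. The uniqueness of representatives then follows at once: $I\sim_Q^* J$ implies $I\subseteq J\subseteq I$, so $I=J$.

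For the moreover part, the two equivalences $x\,\hat{\cdot}\, y\sim_Q^* x\bullet y\sim_Q^* x\cdot y$ can be handled separately. The equivalence $x\cdot y\sim_Q^* x\,\hat{\cdot}\,y$ falls out of the ``moreover'' clause of \Cref{lem:wqo-set} applied at level $\alpha+1$, which is legitimate since our hypothesis says that $\hat{I}^*_\beta(Q)$ is a wqo for every $\beta<\alpha+1$. For $x\cdot y\sim_Q^* x\bullet y$, the direction $x\bullet y\lesssim_Q^* x\cdot y$ is immediate from the definition of $\bullet$. For the reverse, given $x'\cdot y'\in x\cdot y$ with $x'\in x$, $y'\in y$, the most natural witness in $x\bullet y$ would be $x'\cdot y'$ itself, but this need not lie in $\hat{I}^*_\alpha(Q)$; I would therefore replace it by $x'\,\hat{\cdot}\,y'\in\hat{I}^*_\alpha(Q)$, invoking \Cref{lem:wqo-set} at level $\alpha$ to get $x'\cdot y'\sim_Q^* x'\,\hat{\cdot}\,y'$. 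This minor canonicalization step is the only mildly delicate point in the proof, and it is precisely what the apparatus of \Cref{lem:wqo-set} was set up to handle.
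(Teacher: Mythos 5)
Your handling of the containment $\dot{\mathcal{I}}(\hat{I}^*_\alpha(Q))\subseteq\hat{I}^*_{\alpha+1}(Q)$, of the agreement between $\subseteq$ and $\lesssim^*_Q$ on ideals, and of the ``moreover'' clause (including the canonicalization $x'\cdot y'\sim^*_Q x'\,\hat{\cdot}\,y'$, which you actually spell out in more detail than the paper does) all match the intended argument. However, there is a genuine gap: the clause ``contains exactly one representative for each $\sim^*_Q$ equivalence class'' has two halves, and you prove only one. Your observation that $I\sim^*_Q J$ forces $I\subseteq J\subseteq I$, hence $I=J$, gives \emph{at most} one representative per class. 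What is missing is the \emph{existence} half: every $\sim^*_Q$-class of $\hat{I}^*_{\alpha+1}(Q)$ must meet $\dot{\mathcal{I}}(\hat{I}^*_\alpha(Q))$. Since $\hat{I}^*_{\alpha+1}(Q)=\hat{I}^*_\alpha(Q)\cup\dot{\mathcal{I}}^f(\hat{I}^*_\alpha(Q))$, the nontrivial cases are the elements of $\hat{I}^*_\alpha(Q)$ that are not themselves ideals over $\hat{I}^*_\alpha(Q)$ --- most importantly the urelements $q\in Q$, which must be replaced by the downward closure $q\downarrow$ taken inside $\hat{I}^*_\alpha(Q)$, noting $q\sim^*_Q q\downarrow$ and that $q\downarrow$ is of the required finitely-cogenerated form because $\hat{I}^*_\alpha(Q)$ is a wqo; sets entering at levels $\beta<\alpha$ are treated similarly. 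The paper devotes a separate paragraph to precisely this point.

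The omission is not cosmetic, because the existence half is what the lemma is for. In \Cref{theo:ord-refl-id-v} this lemma is combined with \Cref{lem:id-mult-qo} and \Cref{lem:wqo-plus-to-id-plus} to transfer well-foundedness and the $+$-property from $(\dot{\mathcal{I}}(\hat{I}^*_\alpha(Q)),\subseteq,\bullet)$ to all of $(\hat{I}^*_{\alpha+1}(Q),\lesssim^*_Q,\hat{\cdot})$, and that transfer is legitimate only because every element of the latter is equivalent to an element of the former. So you should add the argument that each $x\in\hat{I}^*_\alpha(Q)$ is $\sim^*_Q$-equivalent to some member of $\dot{\mathcal{I}}(\hat{I}^*_\alpha(Q))$; the rest of your write-up can stand as is.
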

\begin{proof}
%\todo{change this according to the new formulation}
    We first observe that, since $\hat{I}^*_\alpha(Q)$ is a wqo, then the operations $\dot{\mathcal{I}}$ and $\dot{\mathcal{I}}^f$ coincide, again since every downward-closed set of a wqo is the complement of a finitely generated upward-closed set. In particular, $\dot{\mathcal{I}}(\hat{I}^*_\alpha(Q))\subseteq \hat{I}^*_{\alpha+1}(Q)$.% We observe that this implies that $\dot I(\hat{I}_\alpha(Q))$ is a set, by \Cref{lem:wqo-set}.

    Next, we want to see that, for any $x,y\in \dot{\mathcal{I}}^f(\hat{I}^*_\alpha(Q))$, $x\subseteq y$ holds if and only if $x \lesssim^*_Q y$. The left-to-right implication is immediate from the definition, so we can focus on the other direction. Suppose that $x,y\in \dot{\mathcal{I}}(\hat{I}^*_\alpha(Q))$ are such that $x\lesssim^*_Q y$: this means that $\forall x'\in x\exists y'\in y( x'\lesssim^*_Q y')$. But every ideal is downward-closed, so this implies that $\forall x'\in x(x'\in y)$, as we wanted.

    %We let $i_{\alpha+1}$ be the identity map. 
    Now, we want to show that every element of $\hat{I}^*_{\alpha+1}(Q)$ is $\sim^*_Q$-equivalent to an element of $\dot{\mathcal{I}}^f(\hat{I}^*_\alpha(Q))$. 
    Since $\hat{I}^*_{\alpha+1}(Q)= \hat{I}^*_\alpha(Q) \cup \dot{\mathcal{I}}^f(\hat{I}^*_\alpha(Q))=\hat{I}^*_\alpha(Q) \cup \dot{\mathcal{I}}(\hat{I}^*_\alpha(Q))$, we just need to prove that every element of $\hat{I}^*_\alpha(Q)$ is $\sim^*_Q$-equivalent to an element of $\dot{\mathcal{I}}(\hat{I}^*_\alpha(Q))$. So let us consider $x\in \hat{I}^*_\alpha(Q)$: if $x$ is a set, then it simply belongs to $\dot{\mathcal{I}}(\hat{I}^*_\alpha(Q))$. If instead $x\in Q$, notice that $x\sim^*_Q \{y\in Q:y\leq_Q x\}\in \dot{\mathcal{I}}(\hat{I}^*_\alpha(Q))$, as we wanted.

    %it is easily seen that $i_{\alpha+1}$ is order-preserving: if $x\subseteq y$, then $x\lesssim^*_Q y$. So we just need to prove that $i_{\alpha+1}$ is order-reflecting. 

    The ``moreover'' part follows easily. %\todo{explain more?}
    %For the ``moreover'' part, by \Cref{lem:wqo-set}, for every $z\in \dot I^*_\alpha(Q)$ there is $z'\in \hat{I}_\alpha(Q)$ with $z'\sim^*_Q z$: therefore, there is $z\in \hat{I}_\alpha(Q)$ such that $z\sim^*_Q x\cdot y$ for every $x,y\in \hat{I}_\alpha(Q)$. The other case works similarly.
\end{proof}

We conclude this section by showing that \Cref{theo:id-two-forms} can be somewhat improved if we assume that the primes of $Q$ form a downward-closed set.

\begin{Definition}
($\prsou$)
We say that $x\in \hat{I}^*_\alpha(Q)$ is \emph{multiplicatively closed} if it is a set and for all $y,z\in x$, there is $w \in x$ such that $y\cdot z \lesssim_Q^* w$.    
\end{Definition}

We notice that if $x\in \hat{I}^*_\alpha(Q)$ is multiplicatively closed, then it is also idempotent, i.e.\ $x\cdot x \sim^*_Q x$.

\begin{Theorem}\label{cor:id-mult-or-itprim}
($\prsou(\mathsf{ch})$)
    Suppose that $Q$ is a monoidal qo with the property that for every $p,q\in Q$, if $p\leq_Q q$ and $q\in\prim(Q)$, then $p$ is either a neutral element or a prime.
    Suppose that $\hat{I}^*_\alpha(Q)$ is a monoidal wqo$^+$, 
    and let $x\in {\prim}(\hat{I}^*_{\alpha+1}(Q))$. Then at least one of the following two options holds:
    \begin{itemize}
        \item $x\sim^*_Q({\prim}(\hat{I}^*_\delta(Q))\cap x)^*$ for some $\delta\leq \alpha$, or
%        \item for every $y\in x$ there is a $y'\in x$ with $y\lesssim^*_Q y'$ and such that $y'$ is multiplicatively closed, or
        \item there is $y\in \hat{I}^*_{\alpha+1}(\prim(Q))$ such that $x\sim^*_Q y$.
    \end{itemize}
\end{Theorem}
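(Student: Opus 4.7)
The plan is to apply \Cref{theo:id-two-forms} to $x$ by invoking \Cref{lem:next-lev-mult-qo}, which identifies $x \in \prim(\hat I^*_{\alpha+1}(Q))$ with a prime ideal of the monoidal wqo$^+$ structure $(\hat I^*_\alpha(Q), \lesssim^*_Q, \hat{\cdot})$. This yields two cases: either $x = (\prim(\hat I^*_\alpha(Q)) \cap x)^*$, which immediately furnishes option one with $\delta = \alpha$; or $x = (\prim(\hat I^*_\alpha(Q)) \cap x)\downarrow$, which is the case to be turned into option two.

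In this second case, the candidate for $y$ is $\{ z \in \hat I^*_\alpha(\prim(Q)) : z \lesssim^*_Q x \}$. Before showing that this works, I would establish by induction on $\beta \leq \alpha$ the following auxiliary claim: every prime $p \in \prim(\hat I^*_\beta(Q))$ is $\sim^*_Q$-equivalent to some element of $\hat I^*_\beta(\prim(Q))$. The base case $\beta = 0$ is immediate since $\prim(Q) \subseteq \hat I^*_0(\prim(Q))$; the successor step invokes \Cref{theo:id-two-forms}: in the product form, primality of $p$ together with the definition of prime element forces $p$ to be $\sim^*_Q$-equivalent to a single prime factor at a lower level of the hierarchy, whence the inductive hypothesis applies; in the downward-closure form, the same construction as for $y$ above produces the required element at level $\beta$, the inductive hypothesis being used to populate the candidate with elements of the hierarchy over $\prim(Q)$.

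With the auxiliary claim in hand, the equivalence $x \sim^*_Q y$ follows: the direction $y \lesssim^*_Q x$ is by the definition of $y$, while for $x \lesssim^*_Q y$ one takes an arbitrary element $x'$ of $x$, finds a prime $q \in \prim(\hat I^*_\alpha(Q)) \cap x$ with $x' \lesssim^*_Q q$ via the downward-closure form, and uses the auxiliary claim to produce $\tilde q \in \hat I^*_\alpha(\prim(Q))$ with $\tilde q \sim^*_Q q$; one then checks that $\tilde q \in y$ and $x' \lesssim^*_Q \tilde q$. The upward-directedness of $y$ is verified analogously, combining the upward-directedness of $x$, the downward-closure representation of $x$, and the auxiliary claim. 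Finally, since $\hat I^*_\alpha(Q)$ is a wqo, its sub-qo $\hat I^*_\alpha(\prim(Q))$ is also a wqo, so every downward-closed subset is the complement of a finitely generated upward-closed set, placing $y$ in $\dot{\mathcal{I}}^f(\hat I^*_\alpha(\prim(Q))) \subseteq \hat I^*_{\alpha+1}(\prim(Q))$.

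The main obstacle will be handling the simultaneous recursion implicit in the auxiliary claim: its successor step invokes \Cref{theo:id-two-forms}, which requires that each intermediate level $\hat I^*_\beta(Q)$ still be a monoidal wqo$^+$. This propagation should follow from \Cref{lem:wqo-set} together with the observation that the $+$-property restricts to sub-qos, but has to be verified explicitly. A secondary technical point is ensuring that the hypothesis on $Q$ about downward closures of primes is what makes $\hat I^*_\alpha(\prim(Q))$ embed cleanly (up to $\sim^*_Q$) into $\hat I^*_\alpha(Q)$, so that the comparison $\lesssim^*_Q$ used in the construction of $y$ behaves as expected; without this hypothesis there could be elements below primes that are neither primes nor neutrals, obstructing the transfer of ideals between the two hierarchies.
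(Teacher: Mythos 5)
There is a genuine gap, and it sits at the heart of your auxiliary claim. You assert that every prime $p\in\prim(\hat I^*_\beta(Q))$ is $\sim^*_Q$-equivalent to an element of $\hat I^*_\beta(\prim(Q))$, arguing in the product case that ``primality of $p$ forces $p$ to be equivalent to a single prime factor at a lower level.'' This is false: $(\prim(\hat I^*_{\delta}(Q))\cap p)^*$ is a \emph{single} ideal (the closure of a set under finite products), not a finite product of ideals, so primality of $p$ gives no decomposition of it. Such multiplicatively closed ideals are genuinely prime (they are idempotent, cf.\ \Cref{cor:stars-are-primes}) and are in general \emph{not} equivalent to anything in $\hat I^*_\beta(\prim(Q))$. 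Already for $Q=P^{<\omega}$ with $P=\{a\}$ a singleton, the ideal $\{a\}^*=P^{<\omega}$ is a prime of $\hat I^*_1(Q)$ containing $aa$, hence strictly above every element of $\hat I^*_1(\prim(Q))$, whose members are all equivalent to $a$. So the auxiliary claim fails at $\beta=1$.

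This error propagates to your main dichotomy. In the downward-closure case $x=(\prim(\hat I^*_\alpha(Q))\cap x)\downarrow$, the cofinal primes of $x$ may themselves be multiplicatively closed ideals $Y^*$ from lower levels; your candidate $y=\{z\in\hat I^*_\alpha(\prim(Q)):z\lesssim^*_Q x\}$ then cannot majorize them, and $x\lesssim^*_Q y$ fails. The correct resolution --- and the one the paper uses --- is a further case split inside the downward-closure case: if a cofinal set of the primes in $x$ is multiplicatively closed, then $x$ itself is multiplicatively closed (for $a,b\in x$ pick a multiplicatively closed $c$ above both; then $ab\lesssim^*_Q cc\sim^*_Q c$, so $ab\in x$), whence $x\sim^*_Q(\prim(\hat I^*_\delta(Q))\cap x)^*$ and $x$ lands in the \emph{first} option, not the second; only when the non-multiplicatively-closed primes are cofinal does one get option two, and that is exactly where the induction on $\beta$ applies. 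The paper organizes this as a three-way intermediate claim (product form; cofinally multiplicatively closed; representable over $\prim(Q)$) precisely to carry this distinction through the recursion. Your secondary concerns (propagating the wqo$^+$ hypothesis down the levels, and the role of the downward-closedness of $\prim(Q)\cup\{e\}$ in $Q$) are legitimate but minor by comparison; the latter is indeed what rules out non-prime, non-neutral urelements slipping into $x$ in the good case.
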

\begin{proof}
    Notice that we are within the assumption of \Cref{lem:wqo-set} and \Cref{lem:next-lev-mult-qo}.

    It is practical to first prove an intermediate claim, namely that, under the above assumptions, one of the following \emph{three} options holds for $x$:
    \begin{enumerate}
        \item $x\sim^*_Q({\prim}(\hat{I}^*_\delta(Q))\cap x)^*$ for some $\delta\leq \alpha$, or
        \item\label{item:cofinal-mc} for every $y\in x$ there is a $y'\in x$ with $y\lesssim^*_Q y'$ and such that $y'$ is multiplicatively closed, or
        \item there is $y\in \hat{I}^*_{\alpha+1}(\prim(Q))$ such that $x\sim^*_Q y$.
    \end{enumerate}
    We prove the claim by induction on $\beta\leq\alpha$. If $\beta=0$, the result follows directly from the classification of prime ideals of $Q$.

    Suppose the statement holds for all $\gamma<\beta$, and let $x\in {\prim}(\hat{I}^*_\beta(Q))$. We can suppose that $\beta$ is not a limit ordinal, since no new elements are added at limit stages, say that $\beta=\delta+1$. We know that $x\sim^*_Q ({\prim}(\hat{I}^*_\delta(Q))\cap x)^*$ or $x\sim^*_Q ({\prim}(\hat{I}^*_\delta(Q))\cap x)\downarrow$, by \Cref{theo:id-two-forms}: we can then assume that $x\sim^*_Q ({\prim}(\hat{I}^*_\delta(Q))\cap x)\downarrow$ and that there exists $y\in x$ such that for every $y'\in x$ with $y\lesssim^*_Q y'$, $y'$ is not multiplicatively closed. There are two cases: 
    \begin{itemize}
        \item There exists $y''\in x$ with $y'\lesssim^*_Q y''$ and for all $z\in ({\prim}(\hat{I}^*_\delta(Q))\cap x)$ with $y''\lesssim^*_Q z$, it holds that $z \in Q$. %Notice that it cannot be the case that all such $z$'s are in $Q\setminus \prim(Q)$, since we know that $x\sim^*_Q (\prim(x))\downarrow$. 
        We can then find a cofinal set of elements of $\prim(Q)$ in $x$. But since we are assuming that $\prim(Q)\cup \{e\}$ is downward-closed in $Q$, $x$ cannot contain any element of $Q\setminus \prim(Q)$, and thus $x\in \dot I^*_{\alpha+1}(\prim(Q))$.
        \item The case above fails, and so there is a cofinal set $C$ of $({\prim}(\hat{I}^*_\delta(Q))\cap x)$ constituted by non-urelements. By our assumptions, we can restrict to consider the subset $N\subseteq C$ constituted by non-multiplicatively closed elements. For every element $w\in N$, we can apply the inductive assumption: notice that if $w\sim^*_Q ({\prim}(\hat{I}^*_\varepsilon(Q))\cap w)^*$ for some $\varepsilon$, then $w$ would be multiplicatively closed, so this case cannot occur. We next claim that if $w$ contained a cofinal set of multiplicatively closed elements, then it would itself be multiplicatively closed, and so this case cannot occur either. Indeed, suppose that $a,b\in w$, by directedness we can then find a multiplicatively closed $c$ above both $a$ and $b$. But then, $ab\lesssim^*_Q cc \sim^*_Q c$, so $ab\in w$ since it is downward-closed. 

        It follows that every element of $N$ is in $\hat{I}^*_{\alpha+1}(\prim(Q))$, and so, as in the previous case, we can conclude that $x\in \hat{I}^*_{\alpha+1}(Q)$.
    \end{itemize}

    This concludes the proof of the intermediate claim. In order to get the Theorem, it is sufficient to notice that if $x\in {\prim} (\hat I^*_{\alpha+1}(Q))$ satisfies \Cref{item:cofinal-mc}, then $x\sim^*_Q({\prim}(\hat{I}^*_\delta(Q))\cap x)^*$ for some $\delta\leq \alpha$. But we have already shown that if $x$ satisfies \Cref{item:cofinal-mc}, then it is multiplicatively closed, and so $x\sim^*_Q x^*$. If $\delta$ is the least ordinal such that $x\in \hat{I}^*_{\delta+1}(Q)$, given that every element of $x$ is the finite product of primes, it follows that $x\sim^*_Q({\prim}(\hat{I}^*_\delta(Q))\cap x)^*$.
\end{proof}

We remark that, in general, an element of $\hat{I}^*_{\alpha+1}(\prim(Q))$ can be multiplicatively closed, so both cases of the above Theorem could hold simultaneously.

\section{$P^{<\omega}$ as multiplicative qo}\label{sec:main-theo}

In this section, we apply what we have seen so far to the case where the multiplicative qo of urelements $(Q,\leq_Q,\cdot)$ is $(P^{<\omega},\preceq,\conc)$. As mentioned before, we assume that $P$, and so $P^{<\omega}$, is a set.

The main result is that, if $(P,\leq_P)$ is a bqo, then for every ordinal $\alpha$ there is an order-reflecting map $f:\prim (\dot I^*_\alpha(P^{<\omega})) \to \dot V^*(P\sqcup \{\star\})$, which allows us to conclude that $(P^{<\omega},\preceq)$ is a bqo. The intuitive idea of how to get this result is to apply \Cref{cor:id-mult-or-itprim} at every level, and to map ``large'' ideals of the form $(\prim (I))^*$ into sets obtained using the large element $\star$, and ``small'' ideals arising from iterated ideals of primes to ``small'' elements obtained without using $\star$. 
%\todo{change this}

There are, of course, several difficulties in implementing the idea presented above. One of the most apparent ones is that, in order to apply \Cref{cor:id-mult-or-itprim}, we need to consider ideals built over a set: in general, we do not know that every $\dot I^*_\alpha(P^{<\omega})$ is (isomorphic to) a set if we do not also know that $P^{<\omega}$ is a bqo, which is the very thing we want to prove. Here is where the sets $\hat{I}^*_\alpha(Q)$ defined in the previous section come into play.

Before we proceed with the proof, we enunciate the specific property of $P^{<\omega}$ as a multiplicative qo that we will need in the proof.

\begin{Lemma}\label{lem:pomeg-either-or}
    ($\prsou(\mathsf{ch})$)
    Suppose that $P$ is a set and $\hat{I}^*_\alpha(P^{<\omega})$ is a monoidal wqo$^+$. Consider an element $x\in {\prim}(\hat{I}^*_\alpha(P^{<\omega}))$. Then either $x$ is multiplicatively closed, or there is $y\in \hat{I}^*_\alpha(P)$ such that $x\sim^*_{P^{<\omega}} y$.
\end{Lemma}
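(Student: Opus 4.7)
The plan is to apply \Cref{cor:id-mult-or-itprim} with $Q=P^{<\omega}$. First, one would check its standing hypothesis that primes are downward-closed modulo the neutral element: the primes of $(P^{<\omega},\preceq,\conc)$ are precisely the one-element sequences $\langle p\rangle$ for $p\in P$, since concatenating two non-empty sequences always yields a sequence of length at least two. Hence if $\tau\preceq \langle p\rangle$, then $|\tau|\le 1$, so $\tau$ is either $\langle\rangle$ (neutral) or $\langle q\rangle$ with $q\le_P p$ (again prime), as required. The base case $\alpha=0$ is direct: $x=\langle p\rangle$ for some $p\in P$, and identifying $\langle p\rangle$ with $p\in \hat{I}^*_0(P)$ yields the second alternative. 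For limit $\alpha$, we reduce to a smaller successor level, since $x\in \dot{\mathcal{I}}^f(\hat{I}^*_{\gamma}(P^{<\omega}))$ for some $\gamma<\alpha$, so $x$ lies in $\hat{I}^*_{\gamma+1}(P^{<\omega})$ and a representative in $\hat{I}^*_{\gamma+1}(P)$ is automatically in $\hat{I}^*_\alpha(P)$.

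The heart of the argument is the successor case $\alpha=\beta+1$. Then \Cref{cor:id-mult-or-itprim} applies at level $\beta$, provided $\hat{I}^*_\beta(P^{<\omega})$ is a monoidal wqo$^+$; this descends from our assumption at level $\beta+1$ because ideal-products computed at level $\beta$ stay at level $\beta$ (cf.~\Cref{lem:next-lev-mult-qo}), and the $+$-property at level $\beta+1$ produces factors that, being below given elements of $\hat{I}^*_\beta$, may be chosen already at level $\beta$. The corollary then yields two alternatives. In the first, $x\sim^*_{P^{<\omega}}(\prim(\hat{I}^*_\delta(P^{<\omega}))\cap x)^*$ for some $\delta\le\beta$; by \Cref{lem:next-lev-mult-qo} this $\sim^*_{P^{<\omega}}$-equivalence between ideals is actually set-theoretic equality, so every member of $x$ is (equivalent to) a finite product of primes drawn from $\prim(\hat{I}^*_\delta(P^{<\omega}))\cap x$. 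Given $y,z\in x$, concatenating such factorisations writes $y\cdot z$ as a further finite product of the same primes, hence equivalent to some $w\in x$, witnessing multiplicative closure. In the second alternative, we obtain $y\in \hat{I}^*_{\beta+1}(\prim(P^{<\omega}))$ with $x\sim^*_{P^{<\omega}} y$; the canonical order-isomorphism $\langle p\rangle\mapsto p$ between $\prim(P^{<\omega})$ and $P$ lifts to an isomorphism between $\hat{I}^*_{\beta+1}(\prim(P^{<\omega}))$ and $\hat{I}^*_\alpha(P)$, yielding the desired representative.

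The main obstacle I anticipate is the bookkeeping required to descend the monoidal wqo$^+$ hypothesis from level $\beta+1$ to level $\beta$, together with a clean formulation of the identification between $\prim(P^{<\omega})$ and $P$ through the entire iterated ideal hierarchy. Neither is conceptually difficult, but both must be handled carefully so that the invocation of \Cref{cor:id-mult-or-itprim} is formally justified inside the weak theory $\prsou(\mathsf{ch})$.
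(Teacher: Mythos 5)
Your proposal proves only half of what the paper actually uses this lemma for. The ``either \dots or'' here is meant exclusively: besides the disjunction, the lemma is asserting that the two alternatives cannot both hold, i.e.\ that an element equivalent to some $y\in \hat{I}^*_\alpha(P)$ is \emph{never} multiplicatively closed. The paper's own proof takes the disjunction to be an immediate consequence of \Cref{cor:id-mult-or-itprim} (essentially your argument, compressed into one sentence) and devotes itself entirely to the exclusivity claim. That claim is the part that gets invoked later: in the proof of \Cref{theo:ord-refl-id-v}, from ``$\star$ is not in the support of $f_{\gamma+1}(x)$'' one knows $x$ is equivalent to an element of $\hat{I}^*_{\gamma+1}(P)$, and the lemma is then cited to conclude that $x$ is \emph{not} multiplicatively closed. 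Your write-up establishes the inclusive disjunction carefully (the verification of the downward-closedness hypothesis on $\prim(P^{<\omega})$, the identification $\prim(P^{<\omega})\cong P$, and the observation that the first alternative of the corollary yields multiplicative closure are all correct and are details the paper glosses over), but it nowhere rules out the overlap.

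The missing argument is short but is the actual content of the paper's proof: suppose $x\sim^*_{P^{<\omega}} y$ with $y\in \hat{I}^*_\alpha(P)$ and $x$ is multiplicatively closed. Since $x$ is prime it is not equivalent to the neutral element (the empty string), so there is some $p\in P$ with $p\lesssim^*_{P^{<\omega}} x$, hence $p\in x$ by downward-closedness. Multiplicative closure then forces $p\cdot p\in x$; but $p\cdot p$ is a string of length two, and no such string lies $\lesssim^*_{P^{<\omega}}$-below anything hereditarily built from $P$ alone, contradicting $x\sim^*_{P^{<\omega}} y\in \hat{I}^*_\alpha(P)$. (In essence: no element of $P$ is idempotent in $P^{<\omega}$.) Without this paragraph your proof does not support the later applications of the lemma.
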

\begin{proof}
    By \Cref{cor:id-mult-or-itprim}, we only need to show that if $x$ is in $\hat{I}^*_\alpha(P)$, then $x$ is not multiplicatively closed. Suppose for a contradiction that this is not the case.
    If, for all $p\in P$, $p\not\lesssim^*_{P^{<\omega}} x$ holds, then $x$ is equivalent to the empty string. So this is not the case. By the fact that $x$ is downward-closed and multiplicatively closed, it follows that $pp\in x$, and $pp\not \in P$, contradiction (in essence, we are only using the fact that no element of $P$ is idempotent).
\end{proof}

\begin{Definition}
    ($\prsou$) Let $(P,\leq_P)$ be a qo and let $P^{<\omega}$ be a set. %We denote by $P'$ the downward-closure of $P$ in $P^{<\omega}$, namely $P$ with the addition of the empty string $\epsilon$ as a minimal element.
    We define the primitive set-recursive maps $f_\alpha: \bigcup_{\beta\leq\alpha}{\prim}(\hat{I}^*_\beta(P^{<\omega})) \to \dot V^*_\alpha(P\sqcup \{\star\})\sqcup \{\bot\}$ by recursion as follows (here we read $\bot$ as ``undefined''):
    \begin{itemize}
        \item $f_0(p)=p$ for every $p\in P$.
        \item 
        $
            f_{\alpha+1}(x)= 
            \begin{cases}
                f_\alpha(x) & \text{ if } x\in \dom f_\alpha \\
                \{ f_\alpha(x') : x'\in x\cap \dom f_\alpha\} & \text{ if } \exists y \in \hat{I}^*_{\alpha+1}(P)
                 ( y \sim^*_{P^{<\omega}} x), \\
                & \text{ }x\not\in \dom f_\alpha,\\
                & \text{ and $f_\alpha(x')\ne \bot$ for all }x'\in x\cap \dom f_\alpha\\
                \{ f_\alpha(x'):x'\in x\cap\dom f_\alpha\} \cup \{\star\} 
                & \text{ if the previous cases do not hold,}\\
                & \text{ $f_\alpha(x')\ne \bot$ for all }x'\in x\cap \dom f_\alpha\text{, and}\\
                & \text{ there is multiplicatively closed } y\sim^*_{P^{<\omega}} x\\
                &\text{ such that $y\in \dot{\mathcal{I}}^f({\prim}(\hat{I}^*_\alpha(P^{<\omega})))$,} \\
                \bot & \text{ otherwise.}
            \end{cases}
        $
        \item $f_\lambda = \bigcup_{\alpha<\lambda} f_\alpha 
        % \rst_{\widehat{\prim}(\hat{I}_\alpha(P^{<\omega}))}
        $
    \end{itemize}
\end{Definition}

The map defined above is clearly primitive set-recursive. To see that it is well-defined, we need to notice two things:
\begin{itemize}
    \item The primes of $(P^{<\omega},\leq_P,\conc)$ are indeed just the elements of $P$, as an easy verification shows.
    \item ${\prim}(\hat{I}^*_\lambda(P^{<\omega}))\subseteq \bigcup_{\alpha<\lambda} {\prim}(\hat{I}^*_\alpha(P^{<\omega}))$: indeed, if $x\in \hat{I}^*_\lambda(P^{<\omega})$ is not decomposable into factors from $\hat{I}^*_\lambda(P^{<\omega})$, then it is also not decomposable into factors from $\hat{I}^*_\alpha(P^{<\omega})$, with $\alpha<\lambda$, since $\hat{I}^*_\lambda(P^{<\omega})=\bigcup_{\alpha<\lambda} \hat{I}^*_\alpha(P^{<\omega})$.
    %\todo{need to change def of primes?}
\end{itemize}

\begin{Theorem}\label{theo:ord-refl-id-v}
    ($\prsou(\mathsf{ch})$) Let $\zeta$ be an ordinal, let $P^{<\omega}$ be a set and let $(P,\leq_P)$ be a $\zeta$-wqo. For every ordinal $\alpha<\zeta$, the following hold:
    \begin{enumerate}
        \item\label{item:theo-ran} %For every $x\in \dom f_\alpha$, the support of $f_\alpha(x)$ is contained in $P\sqcup\{\star\}$ and has non-empty intersection with $P$.
        For every $x\in \dom f_\alpha$, $f_\alpha(x)\neq \bot$.
        %\todo{change this according to the new definition}
        \item\label{item:theo-ord-refl} The map $f_\alpha : (\bigcup_{\beta\leq\alpha}{\prim}(\hat{I}^*_\beta(P^{<\omega})),\lesssim^*_{P^{<\omega}}) \to (\dot V^*_\alpha(P\sqcup \{\star\}), \lesssim^*_{P\sqcup\{\star\}})$ is order-reflecting.
        \item\label{item:theo-mult-qo} for all $x,y\in \hat{I}^*_\alpha(P^{<\omega})$, $x \, \hat{\cdot} \, y \sim^*_{P^{<\omega}} xy$, and so $(\hat{I}^*_\alpha(P^{<\omega}), \lesssim^*_{P^{<\omega}}, \hat{\cdot})$ is a multiplicative qo.
        \item\label{item:theo-plus} $(\hat{I}^*_\alpha(P^{<\omega}), \lesssim^*_{P^{<\omega}}, \hat{\cdot})$ has the $+$-property.
        \item\label{item:theo-prime-fac} For every $x\in \hat{I}^*_\alpha(P^{<\omega})$, there is a finite sequence $x_0,\dots,x_n$ of elements of ${\prim}(\hat{I}^*_\alpha(P^{<\omega}))$ such that $x\sim^*_{P^{<\omega}} x_0 \, \hat{\cdot} \, \dots \, \hat{\cdot} \, x_n$.
    \end{enumerate}
\end{Theorem}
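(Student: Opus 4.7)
The plan is to prove all five items by simultaneous transfinite induction on $\alpha < \zeta$. The base case $\alpha=0$ reduces to standard properties of $(P^{<\omega}, \preceq, \conc)$: item (3) is immediate, item (4) is the classical $+$-property for Higman embedding (if $c \preceq ab$, then $c$ splits as a subsequence of $a$ followed by one of $b$), item (5) holds because the primes of $P^{<\omega}$ are exactly the one-element strings and every finite sequence is the $\conc$-product of its entries, while items (1)--(2) are immediate because $f_0$ is the identity on $P = \prim(P^{<\omega})$.

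For the successor step $\alpha \to \alpha+1$, the first task is to upgrade the inductive hypothesis at $\alpha$ to wqo-ness of $\hat{I}^*_\alpha(P^{<\omega})$ itself. Since $P\sqcup\{\star\}$ inherits the $\zeta$-wqo property from $P$, the order $\dot V^*_\alpha(P\sqcup\{\star\})$ is a wqo by (the argument of) \cite[Proposition 4.12]{nwfr-pakhomov-solda}; combining this with the inductive order-reflection in item (2) yields that $\prim(\hat{I}^*_\alpha(P^{<\omega}))$ is a wqo, and then item (5) together with the monotonicity of $\hat\cdot$ transfers wqo-ness from $(\prim(\hat{I}^*_\alpha(P^{<\omega})))^{<\omega}$ (a wqo by Higman) down to $\hat{I}^*_\alpha(P^{<\omega})$ itself. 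Once wqo-ness is in hand, \Cref{lem:next-lev-mult-qo} identifies $\hat{I}^*_{\alpha+1}(P^{<\omega})$ with $\dot{\mathcal{I}}(\hat{I}^*_\alpha(P^{<\omega}))$ up to $\sim^*$-equivalence and yields item (3) at level $\alpha+1$; \Cref{lem:wqo-plus-to-id-plus} then yields item (4); and \Cref{lem:fin-fact} yields item (5).

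Items (1) and (2) at level $\alpha+1$ are driven by \Cref{lem:pomeg-either-or}: every $x \in \prim(\hat{I}^*_{\alpha+1}(P^{<\omega}))$ that is not already prime at a lower level is either multiplicatively closed or $\sim^*$-equivalent to some element of $\hat{I}^*_{\alpha+1}(P)$. These two possibilities correspond exactly to the third and second clauses in the defining recursion for $f_{\alpha+1}$, which combined with inductive totality shows that the ``otherwise'' clause is never triggered and establishes item (1). The main obstacle of the whole argument is order-reflection in item (2) at this successor step. The proof proceeds by case analysis on the types of $x$ and $y$: the crucial observation is that $\star \in f_{\alpha+1}(x)$ precisely when $x$ is of the multiplicatively closed type, and since $\star$ is incomparable with every other element of $P \sqcup \{\star\}$, the hypothesis $f_{\alpha+1}(x) \lesssim^*_{P\sqcup\{\star\}} f_{\alpha+1}(y)$ forces the $\star$-status (hence the type) of $y$ to be compatible with that of $x$. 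After this reduction, the elements of $x$ and $y$ that contribute to the images live in $\dom f_\alpha$, so unfolding the clauses of $\lesssim^*$ and applying inductive order-reflection of $f_\alpha$ on those elements, combined with the concrete ideal descriptions supplied by \Cref{lem:pomeg-either-or} (which let one translate from a pointwise comparison on generating primes to $\lesssim^*$-comparison of the generated ideals), yields $x \lesssim^*_{P^{<\omega}} y$.

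The limit step $\lambda$ is largely routine: one sets $f_\lambda = \bigcup_{\alpha<\lambda} f_\alpha$, and items (3)--(5) transfer from lower levels, with a small check that primality of $x \in \hat{I}^*_\alpha(P^{<\omega})$ is preserved when $x$ is viewed inside $\hat{I}^*_\lambda(P^{<\omega})$ -- any putative factorization $x \sim y \mathbin{\hat\cdot} z$ in $\hat{I}^*_\lambda$ involves $y,z$ lying in some $\hat{I}^*_\beta$ with $\beta<\lambda$, so the argument reduces to primality of $x$ inside $\hat{I}^*_{\max(\alpha,\beta)}$, which itself follows inductively by similar wqo-set and $+$-property considerations. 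Items (1) and (2) at $\lambda$ then follow by construction from the corresponding items at levels $\alpha < \lambda$.
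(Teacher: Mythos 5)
Your proposal is correct and follows essentially the same route as the paper's proof: simultaneous induction on $\alpha$, deriving wqo-ness of $\hat{I}^*_\gamma(P^{<\omega})$ at the successor step from order-reflection plus Higman's Theorem and the wqo-ness of $\dot V^*_\gamma(P\sqcup\{\star\})$, then invoking \Cref{lem:next-lev-mult-qo}, \Cref{lem:wqo-plus-to-id-plus}, \Cref{lem:fin-fact}, and the dichotomy of \Cref{lem:pomeg-either-or}/\Cref{cor:id-mult-or-itprim} for items (1)--(2), with the same $\star$-tracking case analysis for order-reflection. The only cosmetic difference is at the limit stage, where the paper re-establishes wqo-ness of $\hat{I}^*_\lambda(P^{<\omega})$ and reapplies \Cref{lem:fin-fact} rather than arguing that primality persists across levels, but this does not change the substance of the argument.
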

\begin{proof}
    Notice that the claim for a fixed ordinal $\alpha$ is $\Delta_0$. We can then prove the theorem by induction on $\alpha$. 

    If $\alpha=0$, all the properties are easily verified, again by the fact that ${\prim}(\hat{I}^*_0(P^{<\omega}))$ is just $P$.

    Suppose that the claim holds for all $\beta<\alpha$. We distinguish between the case that $\alpha=\gamma+1$ for some $\gamma$ and the case that $\alpha$ is a limit. 

    Suppose first that $\alpha=\gamma+1$. We start by showing that $\hat{I}^*_\gamma(P^{<\omega})$ is a wqo, by contradiction. 
    Consider two finite sequences $u$ and $v$ of elements of ${\prim}(\hat{I}^*_\gamma(P^{<\omega}))$, respectively of length $n$ and $m$: by inductive assumption \ref{item:theo-mult-qo}, it is easy to check that if $u\preceq v$, then $u(0) \, \hat{\cdot} \, \dots \, \hat{\cdot} \, u(n) \lesssim^*_{P^{<\omega}} v(0) \, \hat{\cdot} \, \dots \, \hat{\cdot} \, v(m)$. Moreover, by inductive assumption \ref{item:theo-prime-fac}, every element of $\hat{I}^*_\gamma(P^{<\omega})$ is equivalent to the finite product of elements from ${\prim}(\hat{I}^*_\gamma(P^{<\omega}))$. We can then apply Higman's Theorem (available in $\prsou$ by \cite[Theorem 3.10]{nwfr-pakhomov-solda}) to conclude that if $\hat{I}^*_\gamma(P^{<\omega})$ is not a wqo, then neither is ${\prim}(\hat{I}^*_\gamma(P^{<\omega}))$, and by inductive assumption \ref{item:theo-ord-refl}, then neither is $\dot V^*_\gamma(P\sqcup \{\star\})\sqcup \{\bot\}$. But we are assuming that $P$ is a bqo, which implies that $P\sqcup \{\star\}$ is a bqo (by a standard application of the Clopen Ramsey Theorem, available in $\prsou$ by \cite[Theorem 3.11]{nwfr-pakhomov-solda}): thus, by \cite[Theorem 5.12]{nwfr-pakhomov-solda}, $\dot V^*_\gamma(P\sqcup \{\star\})$ is a wqo, which means that so is $\dot V^*_\gamma(P\sqcup \{\star\})\sqcup \{\bot\}$. This contradiction finally proves that $\hat{I}^*_\gamma(P^{<\omega})$ is indeed a wqo. 

    Since $\hat{I}^*_\gamma(P^{<\omega})$ being a wqo implies that $\hat{I}^*_\delta(P^{<\omega})$ is a wqo for all $\delta\leq \gamma$, we can apply \Cref{lem:next-lev-mult-qo} and \Cref{lem:id-mult-qo} to conclude that \ref{item:theo-mult-qo} holds for $\alpha=\gamma+1$, and by the same two lemmas, inductive assumption \ref{item:theo-plus} and \Cref{lem:wqo-plus-to-id-plus} imply that property \ref{item:theo-plus} holds for $\alpha=\gamma+1$. 
    Moreover, \Cref{lem:next-lev-mult-qo} and \Cref{lem:id-mult-qo} also imply that $\hat{I}^*_{\gamma+1}(P^{<\omega})$ is well-founded. Then, \Cref{lem:fin-fact} allows us to conclude that property \ref{item:theo-prime-fac} holds with $\alpha=\gamma+1$.
    %Moreover, \Cref{lem:next-lev-mult-qo} and \Cref{lem:id-mult-qo} have two other consequences: on the one hand, the functions $\widehat{\prim}$ and $\prim$ coincide on $\hat{I}^*_{\gamma+1}(P^{<\omega})$,  and on the other $\hat{I}^*_{\gamma+1}(P^{<\omega})$ is well-founded. Then, \Cref{lem:fin-fact} allows us to conclude that properties \ref{item:theo-prime-fac} holds with $\alpha=\gamma+1$. 

    We can also apply \Cref{cor:id-mult-or-itprim} to $\hat{I}^*_\gamma(P^{<\omega})$ (noticing that the set $P$ is downward-closed in $P^{<\omega}$), thus obtaining that every prime in $\hat{I}^*_{\gamma+1}(P^{<\omega})$ is also an element of $\hat{I}^*_{\alpha+1}(P)$ or it is multiplicatively closed.
    This means that the fourth case in the definition of $f_{\gamma+1}$ is never triggered, and together with inductive assumption \ref{item:theo-ran}, this proves that property \ref{item:theo-ran} holds for $\alpha=\gamma+1$. 
    
    So we are only left to prove that property \ref{item:theo-ord-refl} holds for $\alpha=\gamma+1$. Let $x,y\in {\prim}(\hat{I}^*_{\gamma+1}(P^{<\omega}))$ be such that $f_{\gamma+1}(x)\lesssim^*_{P\sqcup\{\star\}} f_{\gamma+1}(y)$, we want to show that $x\lesssim^*_{P^{<\omega}} y$. We consider several cases:
    \begin{itemize}
        \item If $x,y\in \dom f_\gamma$, we conclude by inductive assumption.
        \item If $x\in \dom f_\gamma$ and $y\not\in \dom f_\gamma$, there are two cases. First, suppose that $\star$ is in the support of $f_{\gamma+1}(x)$: then, $\star$ is in the support of $f_{\gamma+1}(y)$ as well, and so both $x$ and $y$ are multiplicatively closed. Since by inductive assumption every prime of $x$ is equivalent to an element of $y$, we conclude that indeed $x\lesssim^*_{P^{<\omega}}y$. 

        If instead $\star$ is not in the support of $f_{\gamma+1}(x)$, then by \Cref{lem:pomeg-either-or}, %either $x\sim^*_{P^{<\omega}} \epsilon$, in which case it is trivial that $x\lesssim^*_{P^{<\omega}} y$, or 
        $x$ is not multiplicatively closed. But if $x$ is not multiplicatively closed, then it is the downward-closure of its own primes, which are in the domain of $f_\gamma$. We can then conclude that $x\lesssim^*_{P^{<\omega}} y$ by inductive assumption.
        \item If $x\not\in \dom f_\gamma$ and $y\in \dom f_\gamma$, %we immediately conclude by inductive assumption. 
        the claim follows from an argument similar to the previous one.
        \item Finally, if $x,y\not\in \dom f_\gamma$, we have again to distinguish between the two cases that $x\in \hat{I}^*_{\gamma+1}(P)$ and that $x$ is multiplicatively closed. The former case is easily dealt with by inductive assumption, whereas for the second we just have to notice that if $x$ is multiplicatively closed, then $\star$ is in the support of $f_{\gamma+1}(x)$, which means that $\star$ is in the support of $f_{\gamma+1}(y)$, and hence $y$ is multiplicatively closed as well. Hence both $x$ and $y$ are determined by their primes, and the claim follows by inductive assumption.
        %All of these cases are dealt with easily by inductive assumption. \todo{expand?}
    \end{itemize}

    So we are left with the case of $\alpha=\lambda$ a limit ordinal. In this case, properties \ref{item:theo-ran} and \ref{item:theo-ord-refl} hold thanks to the corresponding inductive assumptions. Since we can conclude that $\hat{I}^*_\gamma(P^{<\omega})$ is a wqo for every $\gamma<\lambda$, we can apply \Cref{lem:wqo-set} to conclude that property \ref{item:theo-mult-qo} holds. Property \ref{item:theo-plus} then also holds by inductive assumption. Finally, we know that every element of $\hat{I}^*_\lambda(P^{<\omega})$ is the product of finitely many elements of $\bigcup_{\gamma<\lambda}{\prim}(\hat{I}^*_\gamma(P^{<\omega}))$: since $\lambda<\zeta$ we can conclude that this union is a wqo, since it order-reflectingly embeds into $\dot V^*_\lambda(P\sqcup \{\star\})$, which we know to be a wqo. So, again by Higman's Theorem, we conclude that $\hat{I}^*_\lambda(P^{<\omega})$ is a wqo. We can then apply \Cref{lem:fin-fact} to conclude that every element of $\hat{I}^*_\gamma(P^{<\omega})$ is the finite product of primes of $\hat{I}^*_\gamma(P^{<\omega})$, thus establishing property \ref{item:theo-prime-fac}.
\end{proof}

\begin{Remark} 
Arguing without a specific base theory, we observe that for every $\zeta$, if $P$ is a $\zeta$-wqo, then, for every $\alpha<\zeta$, $f_\alpha\colon \bigcup_{\beta\leq\alpha}\prim(\dot I^*_\beta(P^{<\omega}))\to \dot V_\alpha^*(P\sqcup \{\star\})$ is not only order-reflecting, but order-preserving as well, as one easily sees by induction. We did not include this fact in the proof above as it is not needed to reach the desired conclusion.
\end{Remark}

\begin{Corollary}\label{cor:id-hig-wqo}
    ($\prsou(\mathsf{ch})$) Let $\zeta$ be an ordinal, let $P^{<\omega}$ be a set and let $(P,\leq_P)$ be a $\zeta$-wqo. Then, for every ordinal $\alpha<\zeta$, $\hat I^*_\alpha(P^{<\omega})$ is a wqo.
\end{Corollary}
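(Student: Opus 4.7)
The plan is to combine property~\ref{item:theo-ord-refl} of \Cref{theo:ord-refl-id-v} with Higman's Theorem, essentially replaying the argument that was already used in the successor step of that theorem to show wqo-ness of $\hat I^*_\gamma(P^{<\omega})$. First, property~\ref{item:theo-ord-refl} provides an order-reflecting map $f_\alpha$ from $\bigcup_{\beta\leq\alpha}\prim(\hat I^*_\beta(P^{<\omega}))$ into $\dot V^*_\alpha(P\sqcup\{\star\})$. Since $P$ is a $\zeta$-wqo and $\star$ is a fresh element incomparable with everything else, $P\sqcup\{\star\}$ is also a $\zeta$-wqo, and invoking \cite[Theorem 5.12]{nwfr-pakhomov-solda} with $\alpha<\zeta$ yields that $\dot V^*_\alpha(P\sqcup\{\star\})$ is a wqo. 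Order-reflection then transfers this to $\prim(\hat I^*_\alpha(P^{<\omega}))$.

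Next, I would use properties~\ref{item:theo-mult-qo} and~\ref{item:theo-prime-fac} to see that every element of $\hat I^*_\alpha(P^{<\omega})$ is $\sim^*_{P^{<\omega}}$-equivalent to a finite $\hat\cdot$-product of elements of $\prim(\hat I^*_\alpha(P^{<\omega}))$, and that $\hat\cdot$ is monotone with respect to $\lesssim^*_{P^{<\omega}}$. Consequently, the map sending a finite sequence $(x_0,\dots,x_n)$ to the product $x_0\,\hat\cdot\,\dots\,\hat\cdot\,x_n$ is an order-preserving surjection (up to equivalence) from $\prim(\hat I^*_\alpha(P^{<\omega}))^{<\omega}$, endowed with the Higman embeddability $\preceq$, onto $\hat I^*_\alpha(P^{<\omega})$. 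Applying Higman's Theorem, available in $\prsou$ via \cite[Theorem 3.10]{nwfr-pakhomov-solda}, to the wqo $\prim(\hat I^*_\alpha(P^{<\omega}))$ gives that its space of finite sequences is a wqo; since the image of a wqo under an order-preserving surjection is again a wqo, we conclude that $\hat I^*_\alpha(P^{<\omega})$ is a wqo.

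There is no real obstacle here: the statement is essentially an immediate corollary of \Cref{theo:ord-refl-id-v}, and the only piece of bookkeeping to watch is that the invocation of \cite[Theorem 5.12]{nwfr-pakhomov-solda} requires the hypothesis $\alpha<\zeta$, which is exactly what is assumed.
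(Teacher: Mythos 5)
Your proof is correct and follows essentially the same route as the paper's: the paper likewise deduces that $\prim(\hat I^*_\alpha(P^{<\omega}))$ is a wqo from property~\ref{item:theo-ord-refl} of \Cref{theo:ord-refl-id-v}, and then concludes via property~\ref{item:theo-prime-fac} and Higman's Theorem that $\hat I^*_\alpha(P^{<\omega})$ is a wqo. You merely spell out in more detail the intermediate steps (the wqo-ness of $\dot V^*_\alpha(P\sqcup\{\star\})$ and the monotonicity of the product map) that the paper leaves implicit or delegates to the successor-step argument of \Cref{theo:ord-refl-id-v}.
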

\begin{proof}
    By property \ref{item:theo-ord-refl}, the primes of $\hat I^*_\alpha(P^{<\omega})$ form a wqo. Since every element of $\hat I^*_\alpha(P^{<\omega})$ is a finite product of primes by property \ref{item:theo-prime-fac}, it follows that $\hat I^*_\alpha(P^{<\omega})$ is a wqo by Higman's Theorem.
\end{proof}

Using the observations made in \cite[Section 3.3]{nwfr-pakhomov-solda}, we can derive the following result.

\begin{Corollary}\label{cor:ght-atr}
    The Generalized Higman's Theorem is provable in $\atr$.
\end{Corollary}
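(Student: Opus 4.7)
The plan is to assemble the Generalized Higman's Theorem inside $\prsou(\mathsf{ch})$ by combining \Cref{cor:id-hig-wqo} with \Cref{theo:ideal-wqo-qo-bqo}, and then transfer the conclusion to $\atr$ using the interpretation of $\prsou(\mathsf{ch})$ in $\atr$ developed in \cite[Section~3.3]{nwfr-pakhomov-solda}. Since the statement of the Generalized Higman's Theorem is (equivalent to) an arithmetic statement about countable quasi-orders, it suffices that such an interpretation exists and preserves the relevant bqo-notion, which is precisely the content of the setup in the cited section (and analogous to the way results of the previous paper were transferred).

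In more detail, I would argue inside $\prsou(\mathsf{ch})$ as follows. Assume that $(P,\leq_P)$ is a bqo and, since everything we need only concerns countable suborders, assume without loss of generality that $P$ (and hence $P^{<\omega}$) is a set. Since $P$ is a bqo, it is a $\zeta$-wqo for every ordinal $\zeta$. By \Cref{cor:id-hig-wqo} applied with arbitrarily large $\zeta$, we conclude that $\hat I^*_\alpha(P^{<\omega})$ is a wqo for every ordinal $\alpha$. By \Cref{lem:wqo-set}, this entails that $\dot I^*_\alpha(P^{<\omega})$ is a wqo for every $\alpha$. Finally, \Cref{theo:ideal-wqo-qo-bqo} translates these wqo-statements into the statement that there are no bad arrays $g\colon F\to P^{<\omega}$ of rank $\le\alpha$ for any $\alpha$, i.e.\ that $P^{<\omega}$ is a bqo. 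This establishes the Generalized Higman's Theorem in $\prsou(\mathsf{ch})$.

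To descend to $\atr$, I invoke the interpretation $\iota$ of $\prsou(\mathsf{ch})$ into $\atr$ described in \cite[Section~3.3]{nwfr-pakhomov-solda}. As discussed there, the notions of countable quasi-order and of bqo in the sense of $\prsou(\mathsf{ch})$ correspond under $\iota$ to their standard formulations in second-order arithmetic, so the $\prsou(\mathsf{ch})$-theorem ``if $P$ is a bqo then $P^{<\omega}$ is a bqo'' translates to the same statement about countable bqos provable in $\atr$.

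The main obstacle in writing this out carefully is not the last, transfer step, which is essentially bookkeeping, but rather checking that the hypothesis needed to feed \Cref{cor:id-hig-wqo}---namely that $P$ is $\zeta$-wqo for every $\zeta$ we need---can be produced within $\atr$ via the interpretation, and that the various auxiliary notions (fronts with rank, iterated ideals, primes) are all preserved by the interpretation; this is exactly the kind of verification that \cite[Section~3.3]{nwfr-pakhomov-solda} was designed to handle, so in practice one only needs to cite that machinery rather than redo it here.
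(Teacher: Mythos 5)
Your proposal is correct and follows essentially the same route as the paper: combine \Cref{cor:id-hig-wqo} with \Cref{lem:wqo-set} to get that all $\dot I^*_\alpha(P^{<\omega})$ are wqos, conclude bqo-ness via \Cref{theo:ideal-wqo-qo-bqo}, and transfer to $\atr$ using the conservativity machinery of \cite[Section~3.3]{nwfr-pakhomov-solda}. The only (immaterial) difference is that the paper first passes to the stronger theory $\prsou(\mathsf{enu})$ before invoking those conservativity results, whereas you phrase the transfer directly for $\prsou(\mathsf{ch})$.
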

\begin{proof}
    We first argue in $\prsou(\mathsf{enu})$, which is stronger than $\prsou(\mathsf{ch})$. Suppose that $P$ is a bqo, then from \Cref{cor:id-hig-wqo} it follows that $\hat I^*_\alpha(P^{<\omega})$ is a wqo for every $\alpha$, and then so is $\dot I^*_\alpha(P^{<\omega})$ by \Cref{lem:wqo-set}. We conclude that $P^{<\omega}$ is a bqo by \Cref{theo:ideal-wqo-qo-bqo}.

    By the conservativity results given in \cite[Section 3.3]{nwfr-pakhomov-solda}, this proof in $\prsou(\mathsf{enu})$ yields a proof of the same fact in $\atr$.
\end{proof}

\section{On comparing elements of $\dot I^*_\alpha(P^{<\omega})$}\label{sec:comparing-el-gen-hig}

%\todo[inline]{Mention the French}

In this section, we provide a more precise characterization of the orders $\dot I^*_\alpha(P^{<\omega})$. Since we do not apply these constructions to questions of reverse mathematics, for the convenience of the readers we will no longer be verifying the formalizability of our constructions in $\prsou(\mathsf{ch})$. Thus we work in an unspecified set-theoretic foundation. We remark that we nonetheless do not expect any fundamental difficulties to arise with such a formalization.

Let us be more specific about our goals. We show that the order $\lesssim^*_{P^{<\omega}}$ on $\dot I^*_\alpha(P^{<\omega})$ behaves quite similarly to the Higman ordering on $P^{<\omega}$: namely, we show that, if $x_0,\dots,x_n,y_0,\dots,y_m$ are primes of $\dot I^*_\alpha(P^{<\omega})$, then $x_0\cdot\ldots\cdot x_n \lesssim^*_{P^{<\omega}} y_0\cdot\ldots \cdot y_m$ happens exactly when there is a weakly increasing function $h:\{0,\dots,n\} \to \{0,\dots,m\}$ such that $x_i\leq_Q y_{h(i)}$, and two or more $i$'s are mapped to the same $j$ only if $y_j$ is multiplicatively closed. Moreover, an analysis of how primes are produced allows us to be quite precise about the generators of this generalized Higman ordering. 

We remark that the content of this section is once again inspired by \cite{ideal-decomp-goubault-larrecq-halfon-karakndikar-narayan-schnoebelen}: in a sense, what we do here is a generalization of Lemma 4.13 in that paper.

%We add this result in order to further highlight the nice behavior of the classes $\dot I^*_\alpha(P^{<\omega})$ when seen as monoidal qos and the structural properties they inherit from $P^{<\omega}$, but it is not needed for reverse mathematical considerations. We are therefore much more lapse with the theory over which we give the proof.

We start by giving a characterization of the primes of $\dot I^*_\alpha(P^{<\omega})$.

\begin{Lemma}\label{lem:xy-wz}
    Let $(Q,\leq_Q,\cdot)$ be a monoidal qo with the property that, for every $x,y,w,z\in Q$, if $xy\leq_Q wz$ then $x\leq_Q w$ or $y\leq_Q z$.
    Then, for every $X,Y,W,Z\in \dot{\mathcal{I}}(Q)$, $XY\subseteq WZ$ implies that $X\subseteq W$ or $Y\subseteq Z$.
\end{Lemma}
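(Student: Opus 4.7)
The plan is to prove the contrapositive: assume that $X \not\subseteq W$ and $Y \not\subseteq Z$, and deduce that $XY \not\subseteq WZ$ by exhibiting an explicit element of the difference. The ideal structure (upward-directedness) will not be needed for this argument; only the fact that $W$ and $Z$ are downward-closed under $\leq_Q$ will play a role.

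Concretely, I would fix witnesses $x_0 \in X \setminus W$ and $y_0 \in Y \setminus Z$. By the definition of $\bullet$, the product $x_0 y_0$ belongs to $XY$ (since $x_0 y_0 \leq_Q x_0 y_0$), so it suffices to show that $x_0 y_0 \notin WZ$. If this failed, there would be $w \in W$ and $z \in Z$ with $x_0 y_0 \leq_Q wz$. Applying the hypothesis on $Q$ elementwise then yields $x_0 \leq_Q w$ or $y_0 \leq_Q z$. In the first case the downward-closedness of $W$ gives $x_0 \in W$, contradicting the choice of $x_0$; in the second case the downward-closedness of $Z$ gives $y_0 \in Z$, contradicting the choice of $y_0$.

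There is essentially no obstacle here: the lemma is a direct transfer of the element-level cancellation property to ideals, mediated by a single application of the definition of $\bullet$ and of downward-closedness. I note in passing that the same argument shows the stronger statement with $\dot{\mathcal{D}}(Q)$ in place of $\dot{\mathcal{I}}(Q)$; presumably the restriction to ideals in the statement reflects only the intended use of the lemma in the sequel.
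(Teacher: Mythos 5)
Your proposal is correct and is essentially identical to the paper's own proof: both pick witnesses $x_0\in X\setminus W$, $y_0\in Y\setminus Z$, observe $x_0y_0\in XY\subseteq WZ$, and derive a contradiction from the elementwise hypothesis together with the downward-closedness of $W$ and $Z$ (the contrapositive vs.\ contradiction phrasing is immaterial). Your side remark that the argument never uses directedness and hence works for all of $\dot{\mathcal{D}}(Q)$ is also accurate.
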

\begin{proof}
    %Let $\{x_i:i\in\omega\}$ be a cofinal sequence in $X$, and $\{y_i:i\in\omega\}$ a cofinal sequence in $Y$. It is easily seen that $x_iy_i$ is then a cofinal sequence in $XY$. Since every element of $XY$ is below an element of $WZ$, we have that for every $i\in\omega$ $x_iy_i\leq_Q w_iz_i$, for some $w_i\in W$ and $z_i\in Z$, and so $x_i\leq_Q w_i$ or $y_i\leq_Q z_i$. One of the two cases happens for infinitely many $i$'s: if it is the former, we can conclude that $X\subseteq W$, otherwise that $Y\subseteq Z$.
    We prove the claim by contradiction: suppose there are ideals $X,Y,W,Z$ such that $XY\subseteq WZ$ but $X\not\subseteq W$ and $Y\not\subseteq Z$. Then there are $x_0\in X$ and $y_0\in Y$ such that $x_0\not\in W$ and $y_0\not\in Z$. But $x_0y_0\in WZ$, and so there are $w_0\in W$ and $z_0\in Z$ such that $x_0y_0\leq_Q w_0z_0$. From the assumed property of $Q$ we immediately derive the desired contradiction, since $W$ and $Z$ are downward-closed.
\end{proof}

\begin{Corollary}\label{cor:xy-wz-itid}
    For every $\alpha$ and every $x,y,w,z\in \dot I^*_\alpha(P^{<\omega})$, if $xy\lesssim^*_{P^{<\omega}} wz$, then $x\lesssim^*_{P^{<\omega}} w$ or $y\lesssim^*_{P^{<\omega}} z$.
\end{Corollary}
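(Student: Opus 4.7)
The plan is to establish the claim by transfinite induction on $\alpha$, with \Cref{lem:xy-wz} handling the successor step.

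For the base case $\alpha=0$, I need to verify the property directly for $(P^{<\omega},\preceq,\conc)$. This is a standard feature of Higman's ordering: given an embedding $f$ witnessing $xy\preceq wz$, either every position of $x$ is mapped into the $w$-prefix of the codomain---yielding $x\preceq w$ by restriction of $f$---or some position of $x$ lands in the $z$-suffix, in which case by monotonicity of $f$ every later position, and in particular every position of $y$, also lands in the $z$-suffix, yielding $y\preceq z$.

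For the successor step, assume the property at level $\alpha$. Then $(\dot I^*_\alpha(P^{<\omega}),\lesssim^*_{P^{<\omega}},\cdot)$ is a monoidal qo (with neutral element the empty sequence) satisfying the hypothesis of \Cref{lem:xy-wz}, so that lemma yields the analogous property for the ideals of $\dot I^*_\alpha(P^{<\omega})$ with $\subseteq$ and $\bullet$. Since each element of $\dot I^*_{\alpha+1}(P^{<\omega})$ is $\sim^*_{P^{<\omega}}$-equivalent to an ideal of $\dot I^*_\alpha(P^{<\omega})$---replacing an urelement $q\in P^{<\omega}$ by $q\!\downarrow$ when necessary---under an identification that makes $\lesssim^*_{P^{<\omega}}$ correspond to $\subseteq$ and the extended product $\cdot$ correspond to $\bullet$, in analogy with \Cref{lem:next-lev-mult-qo}, the property transfers to $\dot I^*_{\alpha+1}(P^{<\omega})$.

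For the limit step, given $x,y,w,z\in\dot I^*_\lambda(P^{<\omega})$ there is some $\beta<\lambda$ with all four lying in $\dot I^*_\beta(P^{<\omega})$, and the inductive hypothesis at level $\beta$ suffices since the statement does not mention anything of rank above these four. The only bookkeeping of any depth is in the successor step, where one must translate between the abstract ideal-theoretic framework of \Cref{lem:xy-wz} and the hereditarily upward-directed formulation of $\dot I^*$; but since this correspondence preserves $\lesssim^*_{P^{<\omega}}$ and $\cdot$ up to $\sim^*_{P^{<\omega}}$, no genuinely new difficulty arises.
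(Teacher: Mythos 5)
Your proof is correct and takes essentially the same route as the paper: the paper's own proof is the one-line sketch ``an easy induction on $\alpha$, arguing as in \Cref{lem:xy-wz} and noting that products at limit levels can be seen as products at lower levels.'' Your base case for the Higman ordering, your successor step via \Cref{lem:xy-wz} together with the identification of level $\alpha+1$ with ideals over level $\alpha$, and your limit step supply exactly the details that sketch leaves implicit.
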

\begin{proof}
    An easy induction on $\alpha$, arguing as in \Cref{lem:xy-wz} and noting that products at limit levels can be seen as products at lower levels.
\end{proof}

\begin{Corollary}\label{cor:stars-are-primes}
For every $X\subseteq \dot I_\alpha^*(P^{<\omega})$, the set $X^*$ is a prime in $\dot I_{\beta}^*(P^{<\omega})$, for any $\beta>\alpha$. 
\end{Corollary}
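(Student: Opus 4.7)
The plan is to verify the three conditions in the definition of prime element of $\dot I^*_\beta(P^{<\omega})$ for $X^*$: that it lies in $\dot I^*_\beta(P^{<\omega})$, that it is not equivalent to the identity, and that $X^* \sim^*_{P^{<\omega}} A\cdot B$ implies $X^*\sim^*_{P^{<\omega}} A$ or $X^*\sim^*_{P^{<\omega}} B$ for any $A, B \in \dot I^*_\beta(P^{<\omega})$.

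First, I would check that $X^*$ belongs to $\dot I^*_\beta(P^{<\omega})$ whenever $\beta > \alpha$. Since the multiplication extended to $\dot V^*(P^{<\omega})$ preserves membership in $\dot I^*_\alpha$ (as shown in \Cref{sec:vq-iq-mult-qo}), every element of $X^*$ already lies in $\dot I^*_\alpha(P^{<\omega})$, and thus in some $\dot I^*_{\beta'}$ with $\beta'<\beta$. For upward-directedness of $X^*$: given $a, b \in X^*$, the element $a\cdot b$ is equivalent to a finite product of elements of $X$, hence lies in $X^*$; and by weak increasing-ness, which holds on both sides in monoidal qos, $a, b \lesssim^*_{P^{<\omega}} a\cdot b$.

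The heart of the argument is that $X^*$ is \emph{idempotent}, i.e.\ $X^*\cdot X^* \sim^*_{P^{<\omega}} X^*$: one inclusion uses that the product of two finite products of elements of $X$ is again a finite product of elements of $X$, while the other follows from weak increasing-ness together with the non-emptiness of $X^*$. Now suppose $X^*\sim^*_{P^{<\omega}} A\cdot B$. From $A\cdot B\lesssim^*_{P^{<\omega}} X^*$ and weak increasing-ness I get $A, B\lesssim^*_{P^{<\omega}} X^*$. From $X^*\lesssim^*_{P^{<\omega}} A\cdot B$ together with idempotency, $X^*\cdot X^*\lesssim^*_{P^{<\omega}} A\cdot B$, which by \Cref{cor:xy-wz-itid} forces $X^*\lesssim^*_{P^{<\omega}} A$ or $X^*\lesssim^*_{P^{<\omega}} B$; combining with the converse inequalities yields $X^*\sim^*_{P^{<\omega}} A$ or $X^*\sim^*_{P^{<\omega}} B$.

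For the non-identity condition, the statement is applied implicitly under the mild assumption that $X$ contains some element not equivalent to $\langle\rangle$; any such $p \in X$ then gives $p \in X^*$ with $p \not\sim^*_{P^{<\omega}} \langle\rangle$. The only real subtlety is bookkeeping: one must verify that the rank of $X^*$ is strictly less than $\beta$ so that it qualifies as an element of $\dot I^*_\beta$, and that \Cref{cor:xy-wz-itid} is applied at level $\beta$ with $x = y = X^*$, $w = A$, $z = B$. Both checks are routine given the framework developed in \Cref{sec:vq-iq-mult-qo}, so no serious obstacle is expected.
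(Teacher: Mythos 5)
Your proof is correct and follows essentially the same route as the paper's: the key point in both is that $X^*$ is idempotent, so that $X^*\sim^*_{P^{<\omega}} AB$ gives $X^*X^*\lesssim^*_{P^{<\omega}} AB$ and \Cref{cor:xy-wz-itid} yields $X^*\lesssim^*_{P^{<\omega}} A$ or $X^*\lesssim^*_{P^{<\omega}} B$, with weak increasing-ness supplying the converse inequalities. Your extra bookkeeping (membership of $X^*$ in $\dot I^*_\beta$, the non-identity caveat when $X$ consists only of trivial elements) is sound and in fact slightly more careful than the paper's one-line argument.
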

\begin{proof}
    Note that $X^*$ is idempotent, i.e. $X^* X^* \sim^*_{P^{<\omega}} X^*$. Assume for a contradiction that $X^*\sim_{P^{<\omega}}^* YZ$ for some $Y,Z \in  \dot I_{\beta}^*(P^<\omega)$ such that $Y \lnsim X^*$ and $Z \lnsim X^*$. Since $X^*X^*\sim_{P^{<\omega}}^* YZ$ by \Cref{cor:xy-wz-itid} we should have either $X^* \lesssim Y$ or $X^* \lesssim Z$, contradicting the assumption about the choice of $Y$ and $Z$.
\end{proof}

\begin{Lemma}\label{lem:non-idmp-primes-description} All elements of $\dot I_\alpha^*(P)$ are non-idempotent primes in $\dot I_\alpha^*(P^{<\omega})$.\end{Lemma}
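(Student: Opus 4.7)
The plan is to track, for each $x \in \dot V^*(P^{<\omega})$, the set $U(x)\subseteq P^{<\omega}$ of urelements appearing in the $\in$-transitive closure of $x$: set $U(x)=\{x\}$ if $x$ is a urelement and $U(x)=\bigcup_{y\in x} U(y)$ otherwise. By definition of $\dot V^*$, $U(x)$ is always non-empty. Identifying $P$ with the set of length-one sequences in $P^{<\omega}$, so that $\dot I^*_\alpha(P)\subseteq \dot I^*_\alpha(P^{<\omega})$, I will need four auxiliary facts, each proved by a routine induction:
(i) if $x\in \dot I^*_\alpha(P)$ then $U(x)\subseteq P$, so every element of $U(x)$ is a singleton (transfinite induction on $\alpha$);
(ii) $U(x\cdot y)=\{u\cdot v : u\in U(x),\ v\in U(y)\}$, obtained by unfolding the extension of $\cdot$ to $\dot V^*(P^{<\omega})$ case by case on whether $x,y$ are urelements;
(iii) if $x\lesssim^*_{P^{<\omega}} y$ then every $u\in U(x)$ admits some $v\in U(y)$ with $u\preceq v$ in $P^{<\omega}$, by case analysis on the four clauses defining $\lesssim^*_{P^{<\omega}}$;
(iv) $x\sim^*_{P^{<\omega}} e$ if and only if $U(x)\subseteq\{\langle\rangle\}$, by combining (iii) with its straightforward dual and using the fact that $\langle\rangle\lesssim^*_{P^{<\omega}} z$ always holds.

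Now fix $x\in\dot I^*_\alpha(P)$. By (i), $U(x)\subseteq P$, so no element of $U(x)$ is the empty sequence; hence by (iv), $x\not\sim^*_{P^{<\omega}} e$. For non-idempotence, pick any $u\in U(x)$; by (ii), $u\cdot u\in U(x\cdot x)$, and if $x\cdot x\lesssim^*_{P^{<\omega}} x$ held then (iii) would supply $w\in U(x)$ with $u\cdot u\preceq w$, contradicting $|u\cdot u|=2>1=|w|$. For primality, suppose $x\sim^*_{P^{<\omega}} Y\cdot Z$ with $Y,Z\in\dot I^*_\alpha(P^{<\omega})$. From $YZ\lesssim^*_{P^{<\omega}} x$, (ii) and (iii) force each $u\cdot v$ with $u\in U(Y)$, $v\in U(Z)$ to embed into an element of $U(x)\subseteq P$, so $|u|+|v|\leq 1$. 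If some $u_0\in U(Y)$ has positive length, then every $v\in U(Z)$ must be empty, so $U(Z)\subseteq\{\langle\rangle\}$; by (iv) $Z\sim^*_{P^{<\omega}} e$, giving $x\sim^*_{P^{<\omega}} Y$. Otherwise $U(Y)\subseteq\{\langle\rangle\}$, $Y\sim^*_{P^{<\omega}} e$, and $x\sim^*_{P^{<\omega}} Z$; either way $x$ satisfies the primality condition.

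The bulk of the work lies in verifying (i)--(iv); the inductions themselves are unproblematic, but (iii) requires unpacking each of the four clauses of $\lesssim^*_{P^{<\omega}}$, and (i) depends critically on the fact that $\dot I^*_\alpha(P)$ is built using only singletons from $P^{<\omega}$ at the urelement level, rather than arbitrary finite sequences from $P^{<\omega}$. Once these are in hand, the argument is essentially a length count: a singleton cannot dominate the concatenation of two non-empty sequences, so any factorisation of $x$ must push all of the ``length'' into a single factor.
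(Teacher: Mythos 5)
Your proof is correct and follows essentially the same route as the paper: your $U(x)$ is exactly the ``support'' the paper's proof invokes, and both arguments come down to the observation that the support of a product of two factors strictly above $\epsilon$ contains a string of length at least $2$, which cannot embed into the length-one strings making up the support of an element of $\dot I^*_\alpha(P)$. Your write-up merely makes explicit the routine inductions (your facts (i)--(iv)) that the paper leaves implicit.
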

\begin{proof}
    Notice that the lemma is immediately implied by the fact that, for any $X,Y\in \dot I_\alpha^*(P^{<\omega})$ strictly above $\epsilon$, the product $XY$ is not equivalent to any element in $\dot I_\alpha^*(P)$. To see that this fact holds we only observe that the supports of both $X$ and $Y$ will respectively contain some non-$\epsilon$ strings $x\in P^{<\omega}$ and $y\in P^{<\omega}$. Therefore the support of the product $XY$ will contain a string $xy$ of length at least $2$, which is not $\le_{P^{\omega}}$-below any element of $P$. Hence the product $XY$ is not equivalent to any element of $\dot I_\alpha^*(P)$.
\end{proof}

\begin{Definition}
    Suppose $P$ is a qo. Let the hierarchy of sets $S_\alpha(P)\subseteq \dot I_\alpha^*(P^{<\omega})$ be defined as follows:
    \begin{enumerate}
        \item $S_0(P)=\emptyset$;
        \item $S_{\alpha+1}(P)=S_{\alpha}(P)\cup \{X^*\mid  X\in \dot {\mathcal{D}} (S_\alpha(P)\cup \dot I _\alpha^*(P))\}$;
        \item $S_\lambda(P)=\bigcup \limits_{\alpha<\lambda} S_\alpha(P)$.
    \end{enumerate}
\end{Definition}

\begin{Theorem}\label{theo:primes_description} Suppose $P$ is an $(\alpha+1)$-wqo. The sets $\dot I_\alpha^*(P)$ and $S_\alpha(P)$ are up to equivalence the sets of non-idempotent and idempotent primes of $\dot I_\alpha^*(P^{<\omega})$, respectively. 

More precisely:
\begin{enumerate}
    \item \label{item:pd_1} $\dot I_\alpha^*(P)$ consists of non-idempotent primes of $\dot I_\alpha^*(P^{<\omega})$,
    \item \label{item:pd_2} $S_\alpha(P)$ consists of idempotent primes of $\dot I_\alpha^*(P^{<\omega})$,
    \item \label{item:pd_3} every non-idempotent prime of $\dot I_\alpha^*(P^{<\omega})$ is equivalent to an element of $\dot I_\alpha^*(P)$,
    \item \label{item:pd_4} every idempotent prime of $\dot I_\alpha^*(P^{<\omega})$ is equivalent to an element of $S_\alpha(P)$.
\end{enumerate}
\end{Theorem}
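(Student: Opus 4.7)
The plan is to prove all four items simultaneously by transfinite induction on $\alpha$, using \Cref{theo:id-two-forms} and \Cref{cor:id-mult-or-itprim} as the key structural tools. The base case $\alpha=0$ is immediate: $\dot I_0^*(P^{<\omega})=P^{<\omega}$, whose primes are exactly the non-neutral elements of $P=\dot I_0^*(P)$, none of which is idempotent under concatenation, and $S_0(P)=\emptyset$ matches the absence of idempotent primes.

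The inductive step is the main content. At a successor $\alpha=\gamma+1$, item~\ref{item:pd_1} is \Cref{lem:non-idmp-primes-description}; for item~\ref{item:pd_2}, the elements of $S_\gamma(P)\subseteq S_\alpha(P)$ are handled by the inductive hypothesis, while any new $X^*$ with $X\in\dot{\mathcal{D}}(S_\gamma(P)\cup\dot I_\gamma^*(P))$ satisfies $X\subseteq\dot I_\gamma^*(P^{<\omega})$ by the inductive hypothesis, so that $X^*$ is an upward-directed subset of $\dot I_\gamma^*(P^{<\omega})$ (products stay at the same level), lies in $\dot I_{\gamma+1}^*(P^{<\omega})$, is trivially idempotent, and is prime by \Cref{cor:stars-are-primes}. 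For items~\ref{item:pd_3} and~\ref{item:pd_4}, given a prime $p$ of $\dot I_\alpha^*(P^{<\omega})$, apply \Cref{cor:id-mult-or-itprim} to obtain either (a) $p\sim^*_{P^{<\omega}}(\prim(\dot I_\delta^*(P^{<\omega}))\cap p)^*$ for some $\delta\le\gamma$, or (b) $p\sim^*_{P^{<\omega}} y$ for some $y\in\dot I_{\gamma+1}^*(P)=\dot I_\alpha^*(P)$. Case (b) produces a non-idempotent representative (\Cref{lem:non-idmp-primes-description}), giving item~\ref{item:pd_3}. In case (a), $p$ is manifestly idempotent; the inductive hypothesis at level $\delta$ lets us replace each prime of $\dot I_\delta^*(P^{<\omega})$ appearing in the generating set by a $\sim^*_{P^{<\omega}}$-equivalent element of $\dot I_\delta^*(P)\cup S_\delta(P)\subseteq\dot I_\gamma^*(P)\cup S_\gamma(P)$, and passing to the downward closure inside $\dot I_\gamma^*(P)\cup S_\gamma(P)$ exhibits $p$ as equivalent to a member of $S_{\gamma+1}(P)=S_\alpha(P)$. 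The limit case $\alpha=\lambda$ reduces routinely to the inductive hypothesis: every hereditarily upward-directed set of rank $<\lambda$ already appears in some $\dot I_\beta^*(P^{<\omega})$ with $\beta<\lambda$, so $\dot I_\lambda^*(P^{<\omega})=\bigcup_{\beta<\lambda}\dot I_\beta^*(P^{<\omega})$ and likewise for $\dot I_\lambda^*(P)$ and $S_\lambda(P)$, while a $\lambda$-prime lying in $\dot I_\beta^*(P^{<\omega})$ is automatically a $\beta$-prime, so all four items descend to the inductive hypothesis.

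The main obstacle I foresee is making precise the application of \Cref{theo:id-two-forms} and \Cref{cor:id-mult-or-itprim} --- formulated for the set approximations $\hat I^*$ --- directly to the class-sized $\dot I^*_\gamma(P^{<\omega})$. Since \Cref{sec:comparing-el-gen-hig} explicitly works in an unspecified set-theoretic foundation, this is largely a translation issue: one verifies that $\dot I_\gamma^*(P^{<\omega})$ is a monoidal wqo$^+$ (via \Cref{lem:wqo-set} and \Cref{lem:next-lev-mult-qo} combined with \Cref{cor:id-hig-wqo}) and that $P$ is downward-closed among the primes of $P^{<\omega}$, which together unlock the two classification theorems in the $\dot I^*$-setting. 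A secondary subtlety is to justify that replacing primes by $\sim^*_{P^{<\omega}}$-equivalent representatives inside a generating set preserves the equivalence class of the resulting $(\cdot)^*$-closure, which follows routinely from monotonicity of the product.
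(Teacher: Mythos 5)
Your proposal is correct and follows essentially the same route as the paper's proof: items \ref{item:pd_1} and \ref{item:pd_2} via \Cref{lem:non-idmp-primes-description} and \Cref{cor:stars-are-primes}, item \ref{item:pd_3} from the dichotomy of \Cref{lem:pomeg-either-or} (your direct appeal to \Cref{cor:id-mult-or-itprim} is the same step), and item \ref{item:pd_4} by the same inductive argument that replaces the prime factors occurring in a multiplicatively closed prime by equivalent canonical representatives from $S_\gamma(P)\cup\dot I_\gamma^*(P)$ and identifies the prime with the $(\cdot)^*$-closure of the resulting downward-closed set. The only cosmetic difference is that you run a single simultaneous induction on all four items, whereas the paper isolates the induction inside item \ref{item:pd_4}.
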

\begin{proof}
    Clearly, \Cref{item:pd_1} is \Cref{lem:non-idmp-primes-description} and \Cref{item:pd_2} is implied by \Cref{cor:stars-are-primes}. 

    \Cref{item:pd_3} is immediately implied by \Cref{lem:pomeg-either-or}. To get \Cref{item:pd_4} from \Cref{lem:pomeg-either-or} it is enough to show that every multiplicatively closed prime $X$ is equivalent to an element of $S_{\alpha}(P)$.
    
    We prove this by showing by induction on $\beta< \alpha$ that every multiplicatively closed prime $X$ of $\dot I_{\beta+1}^*(P^{<\omega})$ is equivalent to an element of $S_{\beta+1}(P)$. Namely we consider the set $R$ of all $r\in S_\beta(P)\cup \dot I_\beta^*(P)$ such that $r\lesssim^*_{P^{<\omega}}x$ for some $x\in X$. From the inductive assumption it already follows that each prime of $\dot I_\beta^*(P^{<\omega})$ is either equivalent to an element of $\dot I_\beta^*(P)$ or to an element of $S_\beta(P)$. Therefore, for any $x\in X$ given its representation as a product of primes the factors can be equivalently transformed to elements of $R$. Thus we see that $R^*\supset X$. At the same time the multiplicative closedness of $X$ implies that $X\supset R^*$ and hence $X=R^*$, which finishes the proof of \Cref{item:pd_4}.
\end{proof}

Next, we introduce the notion of an abstractly Higman ordering, and show that this property is passed from a wqo$^+$ $Q$ to its space of ideals.

\begin{Definition}
We call a monoidal wfqo \emph{abstractly Higman} if it satisfies the following property:
    \begin{enumerate}
    %\item  $\prim(Q)\cup \{e\}$ (where $e$ is the identity) is a downward closed set.
    %\item\label{item:gen-hig-1} If $p,q$ are primes with $p\leq_Q q$ and $p$ is idempotent, then so is $q$ (i.e., if $pp \equiv p$ and $p\leq_Q q$, then $qq\equiv q$).
    \item\label{item:gen-hig} for every sequence of primes $p_0,\dots,p_n,q_0,\dots,q_m$, $p_0\cdot\ldots\cdot p_n \leq_Q q_0\cdot\ldots\cdot q_m$ holds if and only if there is a weakly increasing function $h:\{0,\dots,n\}\to \{0,\dots,m\}$ such that  $p_i\leq_Q q_{h(i)}$ for all $i\le m$, and moreover, if $|h^{-1}(j)|>1$, then $q_{j}$ is idempotent.
    \end{enumerate}
\end{Definition}

\begin{Theorem}\label{theo:gen-hig-next-lev}
    Let $(Q,\leq_Q,\cdot)$ be an abstractly Higman monoidal wqo$^+$. Then the qo $(\dot{\mathcal{I}}(Q),\subseteq, \bullet)$ is an abstractly Higman monoidal wfqo.
\end{Theorem}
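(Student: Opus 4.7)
The plan is to prove the two parts of the conclusion separately. The monoidal wfqo structure of $(\dot{\mathcal{I}}(Q),\subseteq,\bullet)$ is already in hand: by Lemma~\ref{lem:id-mult-qo} it is a monoidal wfqo, and by Lemma~\ref{lem:wqo-plus-to-id-plus} it moreover enjoys the $+$-property, which I will exploit in the proof of the Higman property. The substantive content is therefore the abstract Higman property. The classification of primes by Theorem~\ref{theo:id-two-forms} is the organizing tool: every prime $I$ of $\dot{\mathcal{I}}(Q)$ is either \emph{idempotent}, of the form $(\prim(Q)\cap I)^*$, or \emph{non-idempotent}, of the form $(\prim(Q)\cap I){\downarrow}$, with $\prim(Q)\cap I$ directed in the latter case.

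The ``if'' direction is standard. Given primes $P_0,\ldots,P_n,R_0,\ldots,R_m$ of $\dot{\mathcal{I}}(Q)$ and a matching $h\colon\{0,\ldots,n\}\to\{0,\ldots,m\}$ as in the Higman property, take $c\in P_0\bullet\cdots\bullet P_n$, so $c\leq_Q a_0\cdots a_n$ with $a_i\in P_i\subseteq R_{h(i)}$. For each $j$ in the image of $h$, let $\beta_j$ be the product of those $a_i$ with $h(i)=j$, taken in increasing order of $i$; this lies in $R_j$ either trivially when $|h^{-1}(j)|=1$, or by the idempotency $R_j\bullet R_j\subseteq R_j$ when $|h^{-1}(j)|>1$. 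For $j$ outside the image of $h$, pick any $\beta_j\in R_j$. Then $\beta_0\cdots\beta_m$ lies in $R_0\bullet\cdots\bullet R_m$ and dominates $a_0\cdots a_n$, hence $c$, by monotonicity and weak increasingness.

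The ``only if'' direction is the substantive part. Assume $P_0\bullet\cdots\bullet P_n\subseteq R_0\bullet\cdots\bullet R_m$. The plan is to lift the containment to a Higman-style matching at the $Q$-level and then project back down. For each $i$, choose a witness $a_i\in P_i$: a single prime from $\prim(Q)\cap P_i$ when $P_i$ is non-idempotent, and a sufficiently long product of primes from $\prim(Q)\cap P_i$ when $P_i$ is idempotent. The product $a_0\cdots a_n$ lies in $P_0\bullet\cdots\bullet P_n$ and so is $\leq_Q b_0\cdots b_m$ for some $b_j\in R_j$. Decomposing each $a_i$ and $b_j$ into its prime factorization in $Q$ and invoking the abstractly Higman hypothesis on $Q$, one obtains a weakly increasing map $\phi$ between prime positions satisfying the $Q$-level idempotency constraint. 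The target function $h(i)$ is then read off as the block index $j$ such that the prime positions of $a_i$ are all sent by $\phi$ into those of $b_j$.

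The hard part is verifying that $h$ is well-defined, that $P_i\subseteq R_{h(i)}$, and that the idempotency condition is met. For $P_i$ non-idempotent, $a_i$ is a single prime of $Q$, so $\phi$ sends it to a single prime position in some $b_j$, giving $a_i\in R_j$; letting $a_i$ range over the directed set $\prim(Q)\cap P_i$ yields a consistent $h(i)$ with $P_i\subseteq R_{h(i)}$. For $P_i$ idempotent, the potential splitting of $a_i$'s primes across several $b_j$'s is the central obstacle: by choosing the test product long enough to cover every prime of $\prim(Q)\cap P_i$ with sufficient multiplicity, the $Q$-level idempotency constraint should force all of $\prim(Q)\cap P_i$ into a single idempotent $b_j$, and hence into an idempotent $R_j$ via the prime classification. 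Finally, the condition that $R_{h(i)}$ be idempotent when several $P_i$'s map to it follows by tracing repetitions on the $Q$-side back through the prime classification of $R_{h(i)}$, while weak monotonicity of $h$ descends directly from that of $\phi$.
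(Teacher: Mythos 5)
Your ``if'' direction and your use of \Cref{theo:id-two-forms} as the organizing tool are fine, but the ``only if'' direction has a genuine gap at exactly the point you flag as ``the central obstacle''. For an idempotent prime $P_i=(\prim(Q)\cap P_i)^*$ you propose to choose a single test element $a_i$ that is ``a sufficiently long product of primes from $\prim(Q)\cap P_i$, covering every prime with sufficient multiplicity''. First, $\prim(Q)\cap P_i$ is in general infinite, so no finite product covers it and no single witness can certify $P_i\subseteq R_{h(i)}$. Second, and more importantly, even granting a finite generating set, the $Q$-level idempotency constraint does \emph{not} force the prime occurrences of $a_i$ into a single block: the constraint in the definition of abstractly Higman only activates when two positions on the left are matched to the \emph{same} position on the right, and nothing prevents the matching $\phi$ from sending the many occurrences of a repeated prime injectively into many distinct prime positions spread across several non-idempotent $b_j$'s. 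Repetition therefore buys you nothing. A related, smaller gap is the claimed consistency of $h(i)$ as the witness $a_i$ ranges over a directed set: each choice of witnesses produces its own $b_j$'s and its own $\phi$, and extracting one weakly increasing $h$ valid for the whole ideals requires a stabilization argument (finitely many candidate block assignments plus directedness) that you assert rather than carry out.

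The fix is to stop trying to locate $P_i$ inside an $R_j$ by element-chasing: since each $P_i$ is a prime of the monoidal wfqo$^+$ $(\dot{\mathcal{I}}(Q),\subseteq,\bullet)$ and $P_i\subseteq P_0\bullet\cdots\bullet P_n\subseteq R_0\bullet\cdots\bullet R_m$, \Cref{lem:qo+-prod} applied \emph{at the ideal level} immediately gives some $j$ with $P_i\subseteq R_j$; the entire difficulty is then concentrated in arranging these $j$'s into a weakly increasing $h$ and verifying the idempotency clause. The paper does this by induction on $n$: it first lifts the two-factor splitting property to ideals (\Cref{lem:xy-wz}) so that, taking $k$ maximal with $P_{n+1}\subseteq R_k$, one gets $P_0\bullet\cdots\bullet P_n\subseteq R_0\bullet\cdots\bullet R_k$ and can apply the inductive hypothesis; the only problematic case is $f(n)=k$ with $R_k$ not multiplicatively closed, and there one shows $R_k=(\prim(Q)\cap R_k){\downarrow}$ contains no idempotent primes (using that any prime above an idempotent prime is idempotent) and derives a contradiction from a single carefully chosen pair of elements $a,b$ via the abstractly Higman property of $Q$. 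Your witness-lifting idea does appear there, but only in that final localized contradiction, not as the mechanism for constructing $h$ globally.
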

\begin{proof}
    First, notice that the right-to-left implication in property \ref{item:gen-hig} is trivial. %\todo{more detail?}
    So we are only left to prove the left-to-right implication.

    We start by noticing that $Q$ being abstractly Higman implies that if $xy\leq_Q wz$, then $x\leq_Q w$ or $y\leq_Q z$: indeed, if we express $x,y,w,z$ as products of primes, then the function $h$ can be used to determine which of the two cases holds. 

    Next, we notice that property \ref{item:gen-hig} implies that if $p,q$ are primes with $p\leq_Q q$ and $p$ is idempotent, then so is $q$ (i.e., if $pp \equiv p$ and $p\leq_Q q$, then $qq\equiv q$). Indeed, suppose this was not the case: then $pp\equiv p \leq_Q q$, but then the map given by property \ref{item:gen-hig} witnessing $pp\leq_Q q$ should give that $q$ is idempotent, contradiction.

    We now prove the claim of the theorem, by induction on $n$. If $n=0$, then the claim is that for every $P_0,R_0,\dots,R_m$ primes of $\dot{\mathcal{I}}(Q)$, if $P_0\subseteq R_0\bullet\dots\bullet R_m$, then there is $j\leq m$ with $P_0\subseteq R_j$: this follows from \Cref{lem:qo+-prod}. 

    Suppose the claim holds for $n$, we prove it for $n+1$. Suppose for a contradiction that the claim is false: there are primes $P_0,\dots,P_{n+1},Q_0,\dots,Q_m$ such that $P_0\bullet \dots \bullet P_{n+1} \subseteq R_0 \bullet \dots \bullet R_m$ but there is no weakly increasing function as in property \ref{item:gen-hig}. Again by \Cref{lem:qo+-prod}, there is $j\leq m$ with $P_{n+1}\subseteq R_j$: let $k$ be the largest of such $j$'s. We can apply \Cref{lem:xy-wz} to $P_0\bullet\dots\bullet P_n, P_{n+1}, R_0\bullet\dots \bullet R_k, R_{k+1}\bullet\dots\bullet R_m$ to obtain that $P_0\bullet\dots\bullet P_n\subseteq R_0\bullet\dots \bullet R_k$. By inductive assumption, there is a weakly increasing map $f$ from $\{0,\dots,n\}$ to $\{0,\dots,m\}$ as given by property \ref{item:gen-hig}. If $f(n)<k$, we are done, since then the map obtained by adding the pair $(n+1,k)$ to $f$ implies the conclusion, so we can suppose that $f(n)=k$. In particular, we can also assume that $P_0\bullet\dots\bullet P_n \not\subseteq R_0\bullet\dots\bullet R_{k-1}$, in the case that $k\neq 0$.
    
    Moreover, if $R_k$ is multiplicatively closed, we are again done by adding $(n+1,k)$ to $f$. So we are only left with the case that $R_k$ is not multiplicatively closed. By \Cref{theo:id-two-forms}, it follows that $R_k=(\prim(Q)\cap R_k)\downarrow$. But then, notice that $R_k$ cannot contain any idempotent prime: suppose for a contradiction it did, then by the assumption that any prime above an idempotent is idempotent, and by the fact that ideals are upward-directed, we can conclude that $R_k$ is the downward closure of a set of idempotent primes. Then, let $a,b$ be any two elements of $R_k$, it follows that there is an idempotent prime $p\in R_k$ above both $a$ and $b$: but then $ab\leq_Q pp \leq_Q p$, which would imply that $R_k$ is multiplicatively closed.

    We can thus pick an element $a\in P_0\bullet\dots\bullet P_n$ such that $a\not \in R_0\bullet\dots\bullet R_{k-1}$ (pick any element of $P_0\bullet\dots\bullet P_n$ if $k=0$), and an element $b\in P_{n+1}$ such that $b\not\in R_{k+1}\bullet\dots\bullet R_m$ (pick any element of $P_{n+1}$ if $k=m$). Notice that, thanks to the assumption that $P_{n+1}\subseteq R_k$ and that $R_k$ is not multiplicatively closed, we can take $b$ to be a prime of $R_k$ (this is not really important, but it does simplify the notation in what follows). We can write $a$ as a product of primes $p_0\cdot\ldots\cdot p_s$, so that $ab\equiv p_0\dots p_s b$. We know that $ab\in R_0\bullet\dots\bullet R_m$, so for every $j\leq_Q m$ there is $r_j\in R_j$ such that 
    \[
        p_0\dots p_s b \leq_Q r_0\dots r_m =: r
    \]
    Notice that we can assume that $r_k$ is just a non-idempotent prime: indeed, we have shown above, $R_k$ is the downward closure of its own primes, and none of them is idempotent.

    Every $r_j$ can be written as a product of primes, say that $r_j\equiv q^j_0\dots q^j_{t_j}$, so, considering what we said above, we have that
    %there are primes $q^j_0,\dots,q^j_{t_j}$ such that 
    \[
        p_0\dots p_s b \leq_Q (q^0_0\dots q^0_{t_0}) \dots (q^{k-1}_0\dots q^{k-1}_{t_{k-1}}) \cdot r_k \cdot (q^{k+1}_0\dots q^{k+1}_{t_{k+1}}) \dots (q^m_0\dots q^m_{t_m} )
    \]
    (where we can see all the elements to the left of $r_k$ as the identity if $k=0$, and similarly to the right if $k=m$).
    So by property \ref{item:gen-hig} there is a weakly increasing function $h$ mapping the $p_i$'s and $b$ into the $q^j_{\ell_j}$'s. Then there are two cases:
    \begin{itemize}
        \item if $b$ is mapped to $r_k$ or or any $q^j_l$ with $j<k$, then it follows that $h$ maps $p_0,\dots, p_s$ into $q^j_\ell$'s with $j<k$, which would imply that $a\in R_0\bullet\dots\bullet R_{k-1}$, contradiction, or
        \item $b$ is mapped to a $q^j_\ell$ with $j>k$, which then entails that $b\in R_{k+1}\bullet \dots\bullet R_m$, again a contradiction.
    \end{itemize}
\end{proof}

We can now abstract from the previous result to get closer to the situation where $Q$ is actually the Higman ordering on a qo $P$.

\begin{Definition}
    Let $(P,\leq_P)$ be a qo, $P_n\subseteq P$ be downward-closed in $P$ and $P_i=P\setminus P_n$. We call \emph{generalized Higman ordering on $P,P_n,P_i$} the order $(P^{<\omega},\leq_H)$, where $(p_0,\ldots,p_{n-1})\le_H (q_0,\ldots,q_{m-1})$ holds if and only if there is a weakly increasing $f\colon n\to m$ such that $p_i\le_P q_{f(i)}$ for each $i<n$ and for every $j<m$, if $|f^{-1}(j)|>1$ then $q_j\in P_i$.

    We denote by $H(P,P_n,P_i)$ the monoidal qo obtained endowing the generalized Higman ordering on $P,P_n, P_i$ with the concatenation operation $\conc$ (the neutral element being the empty sequence $\epsilon$).  
\end{Definition}

\begin{Remark}\label{rem:obv-prop-gen-hig}
    We include some easy observations about $H(P,P_n,P_i)$ that will nevertheless come in handy later on.
    \begin{itemize}
        \item It is easily verified that $\leq_H$ is a reflexive and transitive relation, and that $\conc$ is indeed an associative, weakly increasing and monotone operation. Since every element of $H(P,P_n,P_i)$ is a concatenation of elements of $P$, the $+$-property also holds.
        \item We notice that the primes of $H(P,P_n,P_i)$ are the elements of $P$ and those that are $\equiv_H$-equivalent to them. Indeed, suppose that $q_0\conc q_1$ is a prime, for some $q_0,q_1\in P^{<\omega}$: then, without loss of generality, we can assume that $q_0\conc q_1\leq_H q_0$. If $q_0\in P$, we can conclude that $q_0\conc q_1\equiv_H q_0$, otherwise we iterate this procedure with $q_0$ in place of $q_0\conc q_1$: since we are dealing with finite sequences of elements of $P$, this procedure eventually terminates, and gives the desired conclusion. 
        \item Finally, notice that the idempotent primes of $H(P,P_n,P_i)$ are exactly the elements equivalent to an element of $P_i$. Indeed, it follows immediately from the definition that if $q\equiv_H p$ for $p\in P_i$, then $q\conc q \leq_H p$. Conversely, if $q\equiv_H r$ for $r\in P_n$, we cannot have that $q\conc q\leq_H r$, since $r\not\in P_i$.
    \end{itemize}
\end{Remark}

Putting the definition and the previous remarks together, we can conclude that whenever $P$ is a wfqo, $H(P,P_n,P_i)$ is always an abstractly Higman wfqo.

\begin{Lemma}\label{lem:iso-prop-hig}
    Let $(Q,\leq_Q,\cdot)$ be an abstractly Higman wfqo. Let us denote by $P_i$ the set of idempotent primes of $Q$, and by $P_n$ the set of non-idempotent primes. Then the monoidal wfqos $H(P_i\cup P_n,P_n,P_i)/{\equiv_H}$ and $Q/{\equiv_Q}$ are isomorphic.
\end{Lemma}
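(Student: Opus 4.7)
My plan is to define the obvious map $\phi \colon H(P_i \cup P_n, P_n, P_i) \to Q$ sending a finite sequence $(p_0, \ldots, p_{n-1})$ of primes to the product $p_0 \cdot \ldots \cdot p_{n-1}$ in $Q$ (with the empty sequence going to the neutral element). By construction $\phi$ preserves concatenation and the unit, so once I verify that $\phi$ descends to a bijection between the quotients which is simultaneously order-preserving and order-reflecting, I obtain the desired isomorphism of monoidal wfqos.

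Surjectivity of the induced map on $\equiv_Q$-classes is immediate from \Cref{lem:fin-fact}, since by definition $P_i \cup P_n = \prim(Q)$ and every element of $Q$ is $\equiv_Q$-equivalent to a finite product of primes. For order-reflection, suppose $\phi(p_0, \ldots, p_{n-1}) \leq_Q \phi(q_0, \ldots, q_{m-1})$; the abstractly Higman property applied to these two products yields exactly a weakly increasing $h$ with $p_i \leq_Q q_{h(i)}$ such that $|h^{-1}(j)| > 1$ entails $q_j$ idempotent, i.e., $q_j \in P_i$. This is precisely the condition $(p_0, \ldots, p_{n-1}) \leq_H (q_0, \ldots, q_{m-1})$. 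Well-definedness on $\equiv_H$-classes and injectivity then both follow automatically from the combination of order-preservation and order-reflection.

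For order-preservation, fix a witness $f \colon \{0, \ldots, n-1\} \to \{0, \ldots, m-1\}$ as in the definition of $\leq_H$, and let $I_j = f^{-1}(j)$. When $|I_j| \geq 1$, by monotonicity I have $\prod_{i \in I_j} p_i \leq_Q q_j^{|I_j|}$; if additionally $|I_j| > 1$, the hypothesis forces $q_j$ to be idempotent, so $q_j^{|I_j|} \equiv_Q q_j$, and the case $|I_j| = 1$ is direct. For $j$ outside the image of $f$ I have $e \leq_Q q_j$ by weak increasing-ness. Concatenating these inequalities in the order $j = 0, \ldots, m-1$ and applying monotonicity of $\cdot$ yields $\phi(p_0, \ldots, p_{n-1}) \leq_Q \phi(q_0, \ldots, q_{m-1})$. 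The only genuine subtlety is the use of idempotence in the order-preserving direction, which is exactly the feature of $\leq_H$ that the construction of $P_i,P_n$ is designed to encode; the reflection direction is essentially a direct invocation of the abstractly Higman hypothesis, and the monoid structure is transparent.
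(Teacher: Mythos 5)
Your proposal is correct and follows essentially the same route as the paper's proof: both define the map sending a finite sequence of primes to its product in $Q$, obtain surjectivity from the prime factorization of elements of a monoidal wfqo, and read off order-reflection and order-preservation from the two directions of the abstractly Higman property. The only difference is presentational: you re-derive the (easy) order-preserving direction explicitly from monotonicity and idempotence instead of citing the ``if and only if'' in the definition, and you spell out why well-definedness and injectivity on equivalence classes follow.
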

\begin{proof}
    We have already observed that the primes of $H(P_i\cup P_n,P_n,P_i)$ are the elements equivalent to elements of $P_i\cup P_n$. We then we define an isomorphism $\psi$ from $H(P_i\cup P_n,P_n,P_i)/{\equiv_H}$ to $Q/{\equiv_Q}$ as follows. We put $\psi$ to be the identity on equivalence classes from $P_n\cup P_i$, and for every element $x=p_0\conc \dots \conc p_{n-1}$ of $H(P_i\cup P_n, P_n,P_i)$, we set $\psi([x]_{\equiv_H})=[p_0\cdot\ldots\cdot p_{n-1}]_{\equiv_Q}$. Property \ref{item:gen-hig} guarantees that the map is order-reflecting and order-preserving, and it is easily seen to be surjective.
\end{proof}

We are now ready to give a description of $\dot I^*_\alpha(P^{<\omega})$ as a generalized Higman ordering.

%From \Cref{lem:prim-preserved} and \Cref{theo:primes_description} we immediately derive the following characterization of $\dot I^*_\alpha(P^{<\omega})$:

\begin{Corollary}\label{cor:I_alpha_gen_Higman} 
Suppose $P$ is an $(\alpha+1)$-wqo. Then the monoidal wqo $\dot I^*_\alpha(P^{<\omega})/\sim^*_{P^{<\omega}}$ is isomorphic to $H( \dot I_\alpha^*(P)\cup S_\alpha(P),\dot I_\alpha^*(P), S_\alpha(P))$.
\end{Corollary}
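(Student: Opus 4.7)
The plan is to reduce the statement to \Cref{lem:iso-prop-hig} by showing that $\dot I^*_\alpha(P^{<\omega})/{\sim^*_{P^{<\omega}}}$ is an abstractly Higman monoidal wfqo, and then identifying its non-idempotent and idempotent primes via \Cref{theo:primes_description}. The monoidal wqo$^+$ hypotheses needed along the way are supplied by \Cref{theo:ord-refl-id-v}, which ensures that $\hat I^*_\beta(P^{<\omega})$ is a monoidal wqo$^+$ for every $\beta\le\alpha$ when $P$ is $(\alpha+1)$-wqo.

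The abstractly Higman property is established by transfinite induction on $\alpha$. For $\alpha = 0$ the order $\dot I^*_0(P^{<\omega}) = P^{<\omega}$ is just the classical Higman ordering: its primes are the elements of $P$, none of which are idempotent (since two-letter strings are never $\preceq$-below a single letter), so the characterisation in property \ref{item:gen-hig} specialises to the familiar strictly-increasing-embedding description of $\preceq$. For the successor step I would use \Cref{lem:next-lev-mult-qo}, which identifies $\dot I^*_{\alpha+1}(P^{<\omega})$ modulo $\sim^*_{P^{<\omega}}$ with the ideal space $\dot{\mathcal{I}}(\hat I^*_\alpha(P^{<\omega}))$ together with its $\bullet$-product, and then apply \Cref{theo:gen-hig-next-lev} to the inductive hypothesis. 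For the limit step, any finite configuration of elements of $\dot I^*_\lambda(P^{<\omega})$ already lives in some $\dot I^*_\alpha(P^{<\omega})$ with $\alpha < \lambda$, and the primes at level $\lambda$ are the cumulative union of primes at lower levels; this is visible from \Cref{theo:primes_description}, since both $\dot I_\alpha^*(P)$ and $S_\alpha(P)$ are monotone in $\alpha$. Hence the inductive abstractly Higman property transfers to level $\lambda$.

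With this property in hand, \Cref{lem:iso-prop-hig} produces an isomorphism between $\dot I^*_\alpha(P^{<\omega})/{\sim^*_{P^{<\omega}}}$ and $H(P_n\cup P_i, P_n, P_i)$, where $P_n$ and $P_i$ denote the sets of non-idempotent and idempotent primes of $\dot I^*_\alpha(P^{<\omega})$ respectively. By \Cref{theo:primes_description}, $P_n$ is represented (up to equivalence) by $\dot I_\alpha^*(P)$ and $P_i$ by $S_\alpha(P)$, yielding precisely the claimed isomorphism. The main obstacle I anticipate is the bookkeeping in the successor step: one must verify that the multiplication $\hat\cdot$ on $\hat I^*_{\alpha+1}(P^{<\omega})$ genuinely corresponds, under the equivalence, to the $\bullet$-product on $\dot{\mathcal{I}}(\hat I^*_\alpha(P^{<\omega}))$ so that \Cref{theo:gen-hig-next-lev} really does give the abstractly Higman property for the next level of iterated ideals. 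This is exactly what \Cref{lem:next-lev-mult-qo} provides, so the argument goes through without any genuinely new ingredient beyond what has already been developed.
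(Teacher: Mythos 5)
Your proposal follows essentially the same route as the paper's own proof: induction on $\alpha$, with the successor step given by \Cref{theo:gen-hig-next-lev} applied to the identification of $\dot I^*_{\alpha+1}(P^{<\omega})$ with the ideal space of the previous level, the limit step by inheritance of comparisons from lower levels, and the conclusion obtained by combining \Cref{theo:primes_description} with \Cref{lem:iso-prop-hig}. The argument is correct, and your explicit appeal to \Cref{lem:next-lev-mult-qo} for the compatibility of $\hat\cdot$ with $\bullet$ just makes visible a step the paper leaves implicit.
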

\begin{proof}
    We prove the claim by induction on $\alpha$. If $\alpha=0$, the claim is obvious. If $\alpha=\beta+1$ for some $\beta$, then we know by inductive assumption that $\dot I_\beta(P^{<\omega})/\sim^*_{P^{<\omega}}$ is isomorphic to $H( \dot I_\beta^*(P)\cup S_\beta(P),\dot I_\beta^*(P), S_\beta(P))$, so in particular it is an abstractly Higman ordering. We can thus use \Cref{theo:gen-hig-next-lev} to conclude that $\dot I^*_\alpha(P^{<\omega})$ is abstractly Higman, and then a combination of \Cref{theo:primes_description} and \Cref{lem:iso-prop-hig} gives the desired conclusion. 

    Finally, if $\alpha$ is a limit, we only have to notice that it is abstractly Higman because it inherits the comparisons from the previous levels, and then conclude again by \Cref{theo:primes_description} and \Cref{lem:iso-prop-hig}.
\end{proof}

\section{Conclusions, remarks, and further work}\label{sec:concl}
%----two extra roads to the theorem----
\subsection{Alternative proofs of the main result}
One could imagine several other avenues to prove the main result of this paper. For instance, it should be possible to use the perspective offered in \Cref{sec:comparing-el-gen-hig}, that is, to provide a more direct proof of \Cref{cor:I_alpha_gen_Higman}. For this, the step of induction can either be provided using our methods from \Cref{sec:id-combinatorics} or the previously known \cite{id-higman-kabil-pouzet,ideal-decomp-goubault-larrecq-halfon-karakndikar-narayan-schnoebelen} techniques for the characterization of ideals in Higman orderings. However, we envision the justification of the wqo-ness of $\prim(\dot I^*_\alpha(P^{<\omega}))$ here to be broadly analogous to what we have done in the present paper and to be based upon the construction of order-reflecting maps to $\dot V(P\sqcup \{\star\})$.

Another modification of our proof can be given using the results of Kabil and Pouzet in \cite{id-higman-kabil-pouzet} instead of our developments from \Cref{sec:id-combinatorics}. We note that they used rather different techniques to show that prime ideals of monoidal wqos only have two shapes (that is a more general version of our \Cref{theo:id-two-forms}). Namely, their proof for the general case relies on the characterization of prime ideals for the special case of Higman orderings, while our approach to the characterization of prime ideals essentially goes via the study of properties that a prime ideal should have in order to eventually narrow down the options to only two possible classes. The approaches to the characterization of prime ideals in Higman orderings in \cite{id-higman-kabil-pouzet} and \cite{ideal-decomp-goubault-larrecq-halfon-karakndikar-narayan-schnoebelen} follow quite different lines comparing to \Cref{sec:id-combinatorics} and attain the classification by demonstrating that two possible classes of prime ideals are enough to generate all downward-closed sets generated by finitely many forbidden elements. Although for the cases of Higman orderings and monoidal wqos, these two approaches are quite comparable complexity-wise, we see a possibility that our approach might be better suited to a generalization to the classification of ideals in more complex orderings such as Kruskal orderings (with labels forming a wqo).

\subsection{The orders $\dot I^*_\alpha(Q)$}
In this paper, we introduced the iterated ideal orderings $\dot I^*_\alpha(P)$. But outside of the main result of the paper around Higman orderings, we have not done much in-depth investigation of their structure. The cases of some very simple $P$ were discussed at the end of \Cref{sec:defin}, but it would be interesting to study these orders in more detail, in line with what has already been done for the orders $\dot V^*_\alpha(P)$ \cite{3-bqo-freund,phd-manca,freund-provable}.

\subsection{Bqo-preserving operators and ideals}
In this paper, we observed that, speaking loosely, for a bqo $P$, the structure of $\dot I^*_\alpha(P^{<\omega})$ is roughly as complex as $\dot V^*_\alpha(P)$. A natural question here is whether a similar phenomenon happens for other bqo-preserving operators such as $P\mapsto P^n$ (for a fixed $n$) or $P\mapsto T(P)$, where $T(P)$ is the Kruskal ordering. Our conjecture is that this phenomenon is specific to bqo-preserving operators of roughly the same ``strength'' as the Higman operator.

\bibliographystyle{amsplain}
\bibliography{References}

\end{document}